\DeclareFontFamily{U}{wncy}{}
\DeclareFontShape{U}{wncy}{m}{n}{<->wncyr10}{}
\DeclareSymbolFont{mcy}{U}{wncy}{m}{n}
\DeclareMathSymbol{\Sha}{\mathord}{mcy}{"58} 
\def\bs{\boldsymbol}
\setlist[itemize]{label=--}
\newtheorem{theorem}{Theorem}[section]
\newtheorem{lemma}[theorem]{Lemma}
\newtheorem{prop}[theorem]{Proposition}
\newtheorem{cor}[theorem]{Corollary}
\theoremstyle{definition}
\newtheorem{definition}[theorem]{Definition}
\newtheorem{hyp}[theorem]{Hypothesis}
\newtheorem*{def*}{Definition}
\newtheorem*{ex*}{Example}
\theoremstyle{remark}
\newtheorem{remark}[theorem]{Remark}
\newtheorem*{remark*}{Remark}
\newcommand{\A}{\mathbb{A}}
\newcommand{\bra}{\langle}
\newcommand{\C}{\mathbb{C}}
\newcommand{\cont}{\mathrm{cont}}
\newcommand{\et}{\mathrm{\acute{e}t}}
\newcommand{\Fr}{\mathrm{Frob}}
\newcommand{\G}{\mathbf{G}}
\newcommand{\GL}{\mathrm{GL}}
\newcommand{\h}{\mathrm{H}}
\newcommand{\iso}{\xrightarrow{\,\sim\,}}
\newcommand{\Iw}{\mathrm{Iw}}
\newcommand{\ket}{\rangle}
\newcommand{\m}{\mathfrak{m}}
\newcommand{\mf}[1]{\mathfrak{#1}}
\newcommand{\N}{\mathbb{N}}
\newcommand{\Norm}{\mathbf{N}}
\newcommand{\Or}{\mathcal{O}}
\newcommand{\p}{\mathfrak{p}}
\newcommand{\q}{\mathfrak{q}}
\newcommand{\Q}{\mathbb{Q}}
\newcommand{\R}{\mathbb{R}}
\newcommand{\sh}[1]{\mathcal{#1}}
\newcommand{\SL}{\mathrm{SL}}
\newcommand{\U}{\mathrm{U}}
\newcommand{\Z}{\mathbb{Z}}
\DeclareMathOperator{\Art}{Art}
\DeclareMathOperator{\Aut}{Aut}
\DeclareMathOperator{\diag}{diag}
\DeclareMathOperator{\End}{End}
\DeclareMathOperator{\Gal}{Gal}
\DeclareMathOperator{\Hom}{Hom}
\DeclareMathOperator{\Ind}{Ind}
\DeclareMathOperator{\ord}{ord}
\DeclareMathOperator{\Res}{Res}
\DeclareMathOperator{\spec}{Spec}
\DeclareMathOperator{\Stab}{Stab}
\DeclareMathOperator{\Tr}{Tr}
\theoremstyle{definition}
\newtheorem{assumption}[theorem]{Assumption}
\def\bs{\boldsymbol}
\newcommand{\bH}{\mathbf{H}}
\newcommand{\Sh}{\mathrm{Sh}}
\newcommand{\indf}{\mathbf{1}}
\newcommand{\tG}{{\widetilde{G}}}
\newcommand{\tX}{{\widetilde{X}}}
\newcommand{\con}[1]{{#1}^{\mathtt{c}}}
\begin{document}

\title{Anti-cyclotomic Euler system of diagonal cycles}
\author{Shilin Lai} 
\address[S.L.]{
The University of Texas at Austin, 2515 Speedway, PMA 8.100, Austin, TX 78712, USA.} 
\email{shilin.lai@math.utexas.edu}

\author{Christopher Skinner}
\address[C.S.]{Princeton University, Fine Hall, Washington Road, Princeton, NJ 08544-1000, USA}
\email{cmcls@princeton.edu}

\begin{abstract}
  We construct split anti-cyclotomic Euler systems for Galois representations attached to certain RACSDC automorphic representations on the group $\mathrm{GL}_n\times\mathrm{GL}_{n+1}$. As a result, we make progress towards certain rank 1 cases of the Beilinson--Bloch--Kato conjecture for those representations.
\end{abstract}

\maketitle
\setcounter{tocdepth}{1}
\tableofcontents

\section{Introduction}
Let $E/F$ be a CM extension. For $m=n,n+1$, let $\Pi_m$ be a RACSDC\footnote{regular, algebraic, conjugate self dual, cuspidal} automorphic representation on $\GL_m(\A_E)$. Let $p$ be a rational prime. By the work of many people (cf.~\cite{ChenevierHarris, Caraiani}), there is an $m$-dimensional geometric Galois representation $\rho_{\Pi_m}:\Gal_E\to\GL(V_{\Pi_m})$ attached to $\Pi_m$ 
such that
\[
  L(V_{\Pi_m},s) = L\bigg(s+\frac{1-m}{2},\Pi_m\bigg).
\]
Let $\Pi=\Pi_n\boxtimes\Pi_{n+1}$ and $V_{\Pi} = (V_{\Pi_n}\otimes V_{\Pi_{n+1}})(n)$. Then $V_\Pi$ is conjugate self-dual, and $L(V_\Pi,s) = L(s+1/2,\Pi)$.

We will work in the framework of the arithmetic Gan--Gross--Prasad (AGGP) conjecture \cite{GGPConjecture, ZhangAFL1}. This imposes the following two assumptions on $\Pi$:
\begin{itemize}
  \item The weights of $\Pi_n$ and $\Pi_{n+1}$ are perfectly interlacing at each archimedean place (Definition~\ref{def:Interlace});
  \item The root number $\varepsilon\big(\frac{1}{2},\Pi\big)$ is $-1$.
\end{itemize}
In this setting, the AGGP conjecture predicts that a certain natural diagonal cycle has a null-homologous modification $\triangle_\mathrm{GGP}$, and it satisfies a relation
\[
  L'\Big(\frac{1}{2},\Pi\Big)\overset{??}{=}(\ast)\bra\triangle_\mathrm{GGP},\triangle_\mathrm{GGP}\ket_{\mathrm{BB}},
\]
where $\bra\cdot,\cdot\ket_{\mathrm{BB}}$ is the Beilinson--Bloch height pairing and $(\ast)\neq 0$. Combined with the Bloch--Kato conjecture, this would imply the equivalence
\[
  \bra\triangle_\mathrm{GGP},\triangle_\mathrm{GGP}\ket_{\mathrm{BB}}\neq 0\overset{??}{\iff} \dim \h^1_f(E,V_\Pi)=1.
\]
When $n=1$ and $\Pi_2$ is a modular form of weight 2, the cycle $\triangle_\mathrm{GGP}$ is essentially a Heegner point, and a lot is known about the above two conjectures.

In this paper, we will consider the forward direction of the above conjectural equivalence. The left hand side, including the null-homologous cycle $\triangle_{\mathrm{GGP}}$, depends on the standard conjectures on cycles, so we instead consider its $p$-adic \'etale realization. This leads to an element $z_{\mathrm{GGP}}\in\h^1_f(E,V_\Pi)$, and we expect
\begin{equation}\label{eq:BK}
  z_{\mathrm{GGP}}\neq 0\overset{??}{\implies}\dim \h^1_f(E,V_\Pi)=1.\tag{$\dagger$}
\end{equation}
In the elliptic curves case, results of this form were first established by Kolyvagin \cite{Kolyvagin88} using what is now known as the method of Euler systems. In a recent work of Jetchev--Nekov\'{a}\v{r}--Skinner \cite{JNS}, an analogue of Kolyvagin's argument is axiomatized. The goal of this paper is to supply the Euler system which feeds into their formalism,
thereby establishing the expectation \eqref{eq:BK} under appropriate hypotheses on $\Pi$ and $V_\Pi$.

\subsection{Main results}
Let $\mathscr{L}$ be the set of places of $F$ that split in $E$, with finitely many places removed including all places dividing $p$ or at which $V_\Pi$ is ramified. For each $\ell\in\mathscr{L}$, fix a place $\lambda$ of $E$ above it. Let $\mathscr{R}^p$ be the set of square-free products of places in $\mathscr{L}$.

For each ideal $\m$ of $\Or_F$, let $E[\m]$ be the ring class field with conductor $\m$, so it is associated to the order $\Or_F+\m\Or_E$ by class field theory. Our first result is the following.

\begin{theorem}[Tame norm relation, Theorem~\ref{thm:ES}]\label{thm:tame-intro}
  Assuming the conjectural description of the cohomology of certain unitary Shimura varieties (Hypothesis~\ref{conj:Coh}), there is a lattice $T_\Pi$ in $V_\Pi$ and elements $c_\m\in\h^1(E[\m],T_\Pi)$ for $\m\in\mathscr{R}^p$ satisfying the norm relation
  \[
    \Tr_{E[\m]}^{E[\m\ell]}c_{\m\ell}=P_\lambda(\Fr_\lambda)c_\m
  \]
  where $P_\lambda(X)=\det(1-X\Fr_\lambda|V_\Pi)^{-1}$, and $\Fr_\lambda$ is the arithmetic Frobenius.
\end{theorem}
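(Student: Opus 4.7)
My plan is to realise the classes $c_\m$ as the images of diagonal cycles on a tower of unitary Shimura varieties under the $p$-adic \'etale Abel--Jacobi map, and to reduce the norm relation to a local Hecke-algebra identity at the split prime $\lambda$.

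For the geometric setup I will choose Hermitian spaces $W\subset V$ over $E$ of dimensions $n$ and $n+1$ with archimedean signatures dictated by the perfectly interlacing condition: at one distinguished infinite place the groups $\U(W)$ and $\U(V)$ have signatures $(n-1,1)$ and $(n,1)$, while at the remaining infinite places they are totally definite. Writing $H=\U(W)$, $G=\U(W)\times\U(V)$ and using the diagonal embedding $h\mapsto(h,\iota(h))$, the Shimura varieties $\Sh_H$ and $\Sh_G$ have dimensions $n-1$ and $2n-1$, and $\Sh_H\subset\Sh_G$ is a closed subscheme of codimension $n$. By Hypothesis~\ref{conj:Coh}, the base change $\pi$ of $\Pi$ to $G$ realises $V_\Pi$ inside the $\pi$-isotypic part of $\h^{2n-1}(\Sh_{G,\overline{E}})(n)$, and an integral $\pi$-projector cuts out a stable lattice $T_\Pi$. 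For each $\m\in\mathscr{R}^p$ I will impose on $\Sh_G$ a level structure at the primes dividing $\m$ adapted to the order $\Or_F+\m\Or_E$; the resulting CM cycle $Z_\m\in\CH^n(\Sh_{G,\m})$ is defined over $E[\m]$, and
\[
  c_\m:=\operatorname{proj}_\pi\,\mathrm{cl}(Z_\m)\in\h^1(E[\m],T_\Pi)
\]
is obtained by composing the \'etale cycle class with the Hochschild--Serre projection to degree one and with the $\pi$-projector.

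The norm relation is then deduced as follows. The natural projection $\Sh_{G,\m\ell}\to\Sh_{G,\m}$ is finite \'etale, and on the CM locus it realises the extension $E[\m\ell]/E[\m]$. Hence $\Tr_{E[\m]}^{E[\m\ell]}c_{\m\ell}$ is the $\pi$-projection of the pushforward of $Z_{\m\ell}$ to $\Sh_{G,\m}$, and a standard double-coset unfolding identifies this pushforward with the action on $Z_\m$ of a specific element $t_\ell$ in the spherical Hecke algebra $\sh{H}(G(F_\ell),G(\Or_{F,\ell}))$. Because $\ell=\lambda\bar{\lambda}$ splits in $E$, one has $G(F_\ell)\simeq\GL_n(E_\lambda)\times\GL_{n+1}(E_\lambda)$ with $H(F_\ell)\simeq\GL_n(E_\lambda)$ sitting diagonally, and the Satake transform exhibits $t_\ell$ as an explicit symmetric function in the Satake parameters of $\Pi_{n,\lambda}$ and $\Pi_{n+1,\lambda}$. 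The theorem then reduces to verifying that this Hecke element acts on the $\pi$-component precisely as the local Euler factor $P_\lambda(\Fr_\lambda)$, i.e.\ a \emph{tame} norm relation for the split spherical pair $(\GL_n\times\GL_{n+1},\GL_n)$.

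The crux of the argument is this final local identity. In principle it is a finite unramified calculation in the Iwahori--Hecke algebra of $\GL_{n+1}(E_\lambda)$ relative to its $\GL_n(E_\lambda)$-subgroup, but matching the geometric level-raising coset with the branching of the unramified principal series and with the Galois-theoretic Euler factor demands careful bookkeeping of normalisations. A secondary but nontrivial obstacle is constructing a genuinely integral lattice $T_\Pi$ together with an integral $\pi$-projector, which typically requires an integral PEL model of $\Sh_G$, an integral choice of spherical vector, and a mod-$p$ multiplicity-one input for $\pi$.
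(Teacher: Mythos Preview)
Your broad outline—diagonal cycles on unitary Shimura varieties, Abel--Jacobi map, reduction to a local Hecke identity at the split prime—matches the paper's strategy. But two of your key technical steps diverge from what the paper actually does, and one of them is a genuine gap.

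\textbf{The class $c_{\m\ell}$ is not a single cycle.} You describe $c_{\m\ell}$ as the Abel--Jacobi image of a basic CM cycle $Z_{\m\ell}$ at a deeper level, and then assert that its pushforward to level $\m$ is ``a specific element $t_\ell$'' acting on $Z_\m$. In the paper this is false for the naive cycle: the local test function $\delta_\ell$ at a prime $\ell\mid\m$ is an explicit signed linear combination of $H$-translates of the basic function (see the definition of $\delta'$ and $\delta_1'$ in Section~\ref{sec:Tame}), not the indicator of a single coset. The norm relation $\Tr_{K_G\times J_0}^{K_G\times J_1}\delta = \sh{L}^\vee\cdot\delta_0$ (Proposition~\ref{prop:TameRelation}) only holds for this carefully constructed $\delta$, and finding it is the real content of the local calculation. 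The paper's method is: use cyclicity of $C_c^\infty(\tX,\C)^{K_\tG}$ over the spherical Hecke algebra (Proposition~\ref{prop:Cyc}) to know \emph{some} Hecke operator works, then identify it via a Jacquet--Shalika zeta integral (Proposition~\ref{prop:TameZeta}) combined with a Birch-type lemma on Whittaker functions (Lemma~\ref{lem:Birch}), and finally verify integrality by an explicit coset count (Proposition~\ref{prop:TameIntegral}). Your ``standard double-coset unfolding'' does not capture this.

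\textbf{Integrality and the field of definition.} You propose to obtain the lattice $T_\Pi$ via an integral PEL model and a mod-$p$ multiplicity-one input; this would impose conditions on $p$ that the paper explicitly avoids. Instead, the paper works directly on the spherical variety $\tX=H\backslash\tG$ and shows (Proposition~\ref{prop:cycle}) that the cycle-class map $\mathtt{cyc}$ is integral on $C_c^\infty(\tX(\A_f)^+,\Z_p)$—this is the point of passing to $H$-coinvariants before anything else. Relatedly, the ring class field $E[\m]$ does not arise by varying the level on $G$ as you suggest; the paper keeps the $G$-level fixed and uses the augmented factor $\U(\mathtt{V}_1)$, identifying $\Sh_1(J[\m])\simeq\spec E[\m]$ so that Shapiro's lemma converts $\h^1(E,T_\pi\otimes\Or[\Sh_1(J[\m])])$ into $\h^1(E[\m],T_\pi)$. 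This is what lets the trace on the Galois side match the trace $\Tr_{J[\m]}^{J[\m\ell]}$ on test functions.
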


Up to this point, we have made no assumption on the prime $p$. Now suppose there is a place $\p$ of $F$ above $p$ such that
\begin{itemize}
  \item $\p$ splits in $E$,
  \item $\Pi$ is ordinary at $\p$, cf.~Definition~\ref{defn:ord}, which can be phrased as a condition on the $p$-adic valuation of the Satake parameters of $\Pi$.
\end{itemize}
then we also get norm-compatible classes above $\p$.
\begin{theorem}[Wild norm relation, Corollary~\ref{cor:ES}]
  For each $\m\in\mathscr{R}^p$, there exists classes $c_{\p^t\m}\in\h^1(E[\p^t\m],T_\Pi)$ such that
  \[
    \Tr_{E[\p^t\m]}^{E[\p^{t+1}\m]}c_{\p^{t+1}\m}=c_{\p^t\m}
  \]
  for all $t\geq 0$.
\end{theorem}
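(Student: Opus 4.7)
The plan is to build raw classes at each finite level of the $\p$-tower by the same diagonal-cycle construction underlying Theorem~\ref{thm:tame-intro}, but with deeper level structure at $\p$, and then rescale by the unit Satake parameter of $\Pi$ at $\p$ so that the norm relation becomes an equality with no Euler factor. This is the Hida-theoretic mechanism familiar from the Heegner-point case at $n=1$.

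First, for each $t\ge 0$ I will construct auxiliary classes $\tilde c_{\p^t\m}\in \h^1(E[\p^t\m],T_\Pi)$ by applying the CM/diagonal-cycle construction of Theorem~\ref{thm:tame-intro} to the unitary Shimura variety with Iwahori-type level structure $K_0(\p^t)$ at $\p$ and the original level away from $\p$, where the choice of Iwahori is adapted to the ordinary filtration of $V_\Pi$ at $\lambda$ supplied by Definition~\ref{defn:ord}. The relevant CM points are those associated to the order $\Or_F+\p^t\m\Or_E$; CM theory places them over $E[\p^t\m]$.

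Second, I will compute $\Tr_{E[\p^t\m]}^{E[\p^{t+1}\m]}\tilde c_{\p^{t+1}\m}$. The Galois group $\Gal(E[\p^{t+1}\m]/E[\p^t\m])$ permutes the CM points lying above a given point of conductor $\p^t\m$, and this permutation, via reciprocity and the split factorization $\p=\lambda\bar\lambda$, matches the orbit of a standard Hecke coset at $\lambda$. Summing the Galois conjugates and identifying the resulting Hecke operator should give a clean raw relation
\[
  \Tr_{E[\p^t\m]}^{E[\p^{t+1}\m]}\tilde c_{\p^{t+1}\m} \;=\; U_\p\cdot\tilde c_{\p^t\m},
\]
where $U_\p$ is the integral Hecke operator at $\lambda$ on the ordinary part of cohomology. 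Under the ordinariness hypothesis, $U_\p$ acts on the $\Pi$-isotypic quotient of the ordinary cohomology via the unit Satake parameter $\alpha_\p$, so setting $c_{\p^t\m}:=\alpha_\p^{-t}\cdot e^{\mathrm{ord}}\tilde c_{\p^t\m}$, where $e^{\mathrm{ord}}$ is Hida's integral ordinary projector, converts the raw relation into the stated wild norm relation, and the $t=0$ case recovers a unit multiple of the class from Theorem~\ref{thm:tame-intro}.

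The main obstacle will be the geometric computation of Step~2 together with its integral refinement: one has to identify the level-lowering trace on diagonal cycles with the action of $U_\p$ in a way that is compatible with a $\Z_p$-integral ordinary projector on the lattice $T_\Pi$. Without this integrality, division by $\alpha_\p^t$ would only land in $V_\Pi$, not in $T_\Pi$; securing it will presumably force $T_\Pi$ to be carved out of the ordinary part of the Shimura variety cohomology via Hida theory, and will require propagating the deeper-level trace formulas already used in the tame construction through the Iwahori tower at $\p$, including checking that the contribution of CM points whose conductor drops (the "old form" contribution) is absorbed into the $U_\p$-term without spurious Euler-type corrections.
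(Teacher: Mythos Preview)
Your high-level plan—construct raw classes, prove a norm relation of the form $\Tr\,\tilde c_{\p^{t+1}\m} = U_\p\,\tilde c_{\p^t\m}$, then rescale by the unit eigenvalue—matches the paper exactly, and your rescaling step is literally the one-line proof of Corollary~\ref{cor:ES} from Theorem~\ref{thm:ES}. Where you diverge is in how the raw relation is obtained. You propose a direct geometric argument: deepen the Iwahori level $K_0(\p^t)$ on the $G$-Shimura variety, track CM points of growing conductor under Galois, and identify the trace with a Hecke correspondence. The paper instead keeps the level on $G$ \emph{fixed} at a single Iwahori $K_G^\Iw$ and encodes the growing ring class field through the augmented group $\tG = G\times\U(1)$, with only the level $J_t$ on the $\U(1)$-factor shrinking; the field $E[\p^t\m]$ arises from the $\U(1)$-Shimura set (equation~\eqref{eq:Sh1}), not from deeper level on $G$. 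The wild norm relation is then proved purely as an identity of $\Z$-valued functions on the spherical variety $\tX = H\backslash\tG$ (Proposition~\ref{prop:WildRelation}) via an Iwahori factorization and stabilizer count, with all geometry already packaged into the equivariant cycle map of Proposition~\ref{prop:cycle}. This buys precisely the integrality you worry about: the function-space identity is exact over $\Z$, so no conductor-drop correction appears, and no Hida projector $e^{\mathrm{ord}}$ is needed—the paper simply chooses the test vector $v\in\pi_f^K$ to be a $\sh{U}_\p$-eigenvector with unit eigenvalue $\lambda_\p\in\Or^\times$ (Definition~\ref{defn:ord}), so $c_{\p^t\m}:=\lambda_\p^{-t}z_{\p^t\m}$ lies in $\h^1(E[\p^t\m],T_\Pi)$ by construction.
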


\begin{remark}
  The wild norm relation, at least in the minimal weight case, is already present in the works \cite{LoefflerSpherical} and \cite{LiuRS}. Our construction is essentially a repackaged version of theirs.
\end{remark}

Combined with the results of \cite{JNS}, we obtain the following result towards the Bloch--Kato conjecture.
\begin{theorem}[Theorem~\ref{thm:App}]
Assume Hypothesis~\ref{conj:Coh}. Let $z_E = \Tr_E^{E[1]} c_1 \in \h^1_f(E,V_\Pi)$.  Suppose
\begin{itemize} 
\item[\rm (i)] $V_\Pi$ is absolutely irreducible,
\item[\rm (ii)] there exists $\sigma \in \Gal_E$ that fixes $E[1](\mu_{p^\infty})$ and such that $\dim_\Phi V_\Pi/(\sigma-1)V_\Pi = 1$, 
\item[\rm (iii)] either 
\subitem{\rm (a)} there exists a place $\p$ of $F$ above $p$ which splits in $E$ such that $\Pi$ is ordinary at $\mathfrak{p}$, or
\subitem{\rm (b)} there exists $\gamma \in\Gal_E$ such that $\gamma$ fixes $E[1](\mu_{p^\infty})$
and $V_\Pi/(\gamma-1)V_\Pi = 0$.
\end{itemize}
Then
\[
z_E\neq 0 \implies \dim_\Phi\h^1_f(E,V_\Pi) = 1.
\]
\end{theorem}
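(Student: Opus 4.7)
The plan is to apply the abstract Kolyvagin-style formalism of Jetchev--Nekov\'a\v{r}--Skinner to the anticyclotomic Euler system for $V_\Pi$ provided by the preceding theorems. The tame norm relation of Theorem~\ref{thm:tame-intro} gives the classes $\{c_\m\}_{\m \in \mathscr{R}^p}$ indexed by square-free products of split primes of $F$, and, in case (iii)(a), the wild norm relation upgrades this to a norm-compatible family along the anticyclotomic $\p$-tower $\bigcup_t E[\p^t\m]$. Together, these form precisely the input data required by the JNS machinery, whose conclusion is that if $z_E = \Tr_E^{E[1]} c_1 \in \h^1_f(E, V_\Pi)$ is nonzero then $\dim_\Phi \h^1_f(E, V_\Pi) = 1$.

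What remains is to verify the hypotheses of the JNS main theorem in our setting. Hypothesis (i), absolute irreducibility of $V_\Pi$, is the standard ``big image'' input needed to work with the lattice $T_\Pi$ and its mod-$p$ reductions in the derivative construction. Hypothesis (ii) provides the element $\sigma \in \Gal_E$ which, via Chebotarev, is realized as Frobenius at infinitely many Kolyvagin primes $\ell \in \mathscr{L}$; the condition $\dim_\Phi V_\Pi/(\sigma-1)V_\Pi = 1$ ensures that the localized derivative classes at such primes are nonzero in the appropriate quotient, which is the mechanism by which the argument bounds the Selmer group. Hypothesis (iii) handles the behavior at primes above $p$: in the ordinary case (a), the wild norm relation realizes $c_1$ as the bottom of an Iwasawa-theoretic class over the anticyclotomic $\Z_p$-extension, and the ordinary hypothesis identifies the Bloch--Kato local condition with the unramified condition along this tower, which JNS uses to control $\h^1_f$ at $p$; in the alternative case (b), the element $\gamma$ with $V_\Pi/(\gamma-1)V_\Pi = 0$ forces the vanishing of certain local cohomology groups at primes above $p$ for a positive-density family of derivative classes, playing the role of the ordinary hypothesis in the JNS argument.

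The main obstacle is the verification that the classes $c_\m$ actually land in the Bloch--Kato Selmer group, i.e.\ that they satisfy the correct local conditions at all primes (crystalline or ordinary at primes above $p$, unramified at primes in $\mathscr{L}$, and the appropriate condition at the remaining bad primes). This is where the geometric origin of the classes as \'etale Abel--Jacobi images of diagonal cycles on unitary Shimura varieties is essential, together with the input from Hypothesis~\ref{conj:Coh} on the cohomology of those varieties; the subtlety is that our representation is attached to $\GL_n \times \GL_{n+1}$ rather than a single automorphic form, so the verification of crystallinity and of the ordinary filtration relies on the full automorphic description of the $p$-adic representation at $\p$.
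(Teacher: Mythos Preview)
Your proposal is correct and matches the paper's approach exactly: the paper gives no proof beyond the sentence ``By applying the results of \cite{JNS}, we obtain the following theorem,'' so your plan to feed the Euler system of Theorem~\ref{thm:ES} and Corollary~\ref{cor:ES} into the Jetchev--Nekov\'a\v{r}--Skinner machinery, with hypotheses (i)--(iii) supplying the Galois-theoretic inputs, is precisely what is intended. One small correction to your gloss on (iii)(b): the element $\gamma$ with $V_\Pi/(\gamma-1)V_\Pi=0$ is not used to control local cohomology at primes above $p$, but rather to force the vanishing of certain global invariants over the ring class tower (so that the Kolyvagin argument goes through without the Iwasawa-theoretic input supplied by the $\p$-tower in case (a)); this does not affect the validity of your overall strategy.
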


The base class $z_E$ in the theorem is the $p$-adic \'etale realization of the diagonal cycle $\triangle_\mathrm{GGP}$ described above. In the case $n=1$ and $\Pi_2$ has weight 2, this is the classical result that if the Heegner point has infinite order, then the elliptic curve has rank 1 \cite{Kolyvagin88}.

We make a few remarks about the relation of this theorem with other recent results in the literature.
\begin{enumerate}
  \item In the minimal weight case, the theorem was also established in \cite[Theorem 1.1.8]{LTXZZ} with a different set of conditions on the prime $p$ (denoted $\ell$ there). Our hypotheses appear to be milder. In the notations of Definition 8.1.1 of \emph{op.~cit.},
  \begin{itemize}
    \item Conditions (L1), (L2), (L4), (L6), (L7) are not needed. In particular, we allow small or ramified $p$ and make no assumptions on the residual representations.
    \item Condition (L3) is our condition (i).
    \item Condition (L5) is analogous to our condition (ii), though they are not directly comparable.
    \item Our condition (iii) does not appear in their list. It can often be proven under some big image assumptions (cf.\ the proof of Corollary~\ref{cor:App}).
  \end{itemize}
  As we will explain in Corollary~\ref{cor:App} and the remark following it, under certain purely automorphic assumptions, the conclusion of the theorem holds for \emph{all} rational primes $p$.
  \item Also in the minimal weight case, a recent result of Disegni--Zhang \cite{DZ} relates the $p$-adic height pairing of the class $z_E$ to the derivative of a $p$-adic $L$-function. Using \cite[Theorem 1.1.8]{LTXZZ}, they deduce the following cases of the $p$-adic Bloch--Kato conjecture \cite[Theorem C]{DZ}:
  \[
    \ord_{\chi=\mathbf{1}}\mathscr{L}_p(\chi)=1  \implies \dim_\Phi\h^1_f(E,V_\Pi) = 1,
  \]
  under certain conditions on $p$, including the ordinarity of $\Pi$ at $p$ and the admissibility conditions discussed above.
  
  By applying Theorem~\ref{thm:App} instead, we may replace the condition ``admissible prime'' in \cite[Theorem C]{DZ} with our conditions (i) and (ii) (condition (iii)(a) was already assumed in \emph{op.~cit.~}to construct the $p$-adic $L$-function).
  \item In the ordinary setting, one can also formulate a Perrin-Riou type Iwasawa main conjecture. Under the above admissibility assumption, one divisibility of such a conjecture was established by \cite[Theorem 1.2.3(2)]{LTX}. The results of \cite{JNS} gives an alternative proof under conditions (i), (ii), (iii)(a) along with additional conditions on $T_\Pi$.
\end{enumerate}

\subsection{Idea of proof}
Our construction is an integral version of the zeta integral strategy first developed in the papers \cite{LSZ,LSZ-U3}. We outline the strategy in a loose way. One first constructs a map of the form
\[
  C_c^\infty(G,\Q_p)\text{ ``$\to$'' } \h^1(E,V_\Pi)
\]
by taking Hecke translates of the diagonal cycle class. By construction, it is Hecke equivariant and invariant under a subgroup $H(\A)$. Now write down some well-chosen test functions on the source. General Hecke-cyclicity results (Proposition~\ref{prop:Cyc}) imply the norm relation holds for some Hecke operator, and a zeta integral calculation (Proposition~\ref{prop:TameZeta}) identifies the Hecke operator as the one required for an Euler system.

One subtle issue is that in most settings, the map does not preserve the natural integral structures, and one needs to formulate the correct integral structure on the source, cf.~\cite[Definition 3.2.1]{LSZ-U3}. In this paper, we take $H$-coinvariants immediately and constructs a map
\[
  C_c^\infty(X,\Z_p)\text{ ``$\to$'' }\h^1(E,T_\Pi)
\]
where $X=H\backslash G$ is the spherical variety underlying the setting, and $T_\Pi$ is a fixed lattice in $V_\Pi$ (cf.\ Proposition~\ref{prop:cycle}). This provides a natural explanation of the integrality requirements appearing in previous works. Moreover, all of the norm relations are proven directly in $\h^1(E,T_\Pi)$, so we bypass some technical Iwasawa-theoretic arguments used in previous works, for example in the proof of \cite[Theorem 10.5.4(b)]{LSZ}. Thus, we can completely remove all conditions on $p$ when dealing with tame norm relations.

We also systematically use the augmented group $\tG=G\times\U(1)$, which was used in the work of Graham--Shah on Euler systems in the Friedberg--Jacquet setting \cite{GrahamShah}. Our use of the augmentation is generally cosmetic, but it allows us to fix the level of the Shimura variety for $G$ in the steps where we construct Galois cohomology classes from cycles. This somewhat simplifies the exposition.

\subsection{Future work}
To relate the Euler system to classical $L$-values (instead of derivatives of $p$-adic $L$-functions), one would hope for an explicit reciprocity law relating the diagonal cycle class $z_{\mathrm{GGP}}$ to a certain $p$-adic $L$-function outside of the range of interpolation, as in the classical work \cite{BDP}. In our case, it is likely that one has to consider variations of $\Pi$ in a Hida family, instead of just character twists.

In a different direction, we have reduced the Euler system norm relation to certain relations in the function space $C_c^\infty(X,\C)$. This becomes a question of unramified harmonic analysis on spherical varieties, and it is possible to avoid much of the ad hoc calculations of Section~\ref{sec:Tame} using this point of view. In a future joint work of the first named author with Li Cai and Yangyu Fan, we will explain this approach in the twisted Friedberg--Jacquet setting.

\subsection*{Acknowledgments}
We thank Ashay Burungale, Li Cai, Yangyu Fan, Dimitar Jetchev, and Wei Zhang for their interest and helpful conversations.

The second-named author was supported by the Simons Investigator Grant \#376203 from the Simons Foundation and
 the National Science Foundation Grant DMS-1901985.

\section{General set-up}
\subsection{Fields}
Let $p$ be a rational prime. Fix an isomorphism $\iota:\C\simeq\overline{\Q}_p$.

Let $F$ be a totally real field of degree $g$ with algebraic closure $\bar{F}$. Let $E$ be a CM extension of $F$ contained in $\bar{F}$. We fix the following choices
\begin{itemize}
  \item an embedding $\iota_\infty:\bar{F}\to\C$,
  \item a CM type $\Sigma^+_\infty$ for $E$ compatible with $\iota_\infty$.
\end{itemize}
This singles out an archimedean place of $F$, which we denote by $\infty_*$. Moreover, the CM type $\Sigma^+_\infty$ distinguishes one of the two extensions of $\infty_*$ to $E$. The complex conjugation on $\C$ induces 
via $\iota_\infty$
an element $\mathtt{c}\in\Gal_F$ which is non-trivial in $\Gal(E/F)$. 

Let $\A$ denote the adeles of $F$ and $\A_E$ denote the adeles of $E$. We will use the following convention: a superscript consisting of places indicates omitting those places, and a subscript consisting of places indicates only considering those places. For example, $\A_{E,f}=\A_E^\infty$ is the ring of finite adeles of $\sh{K}$, and $\A_f^p=\A^{p\infty}$ is the ring of finite adeles of $F$ omitting the places above $p$.

The Artin map of global class field theory will be denoted by
\[
  \Art_F:F^\times\backslash\A^\times\to\Gal(F^\mathrm{ab}/F),
\]
normalized so that a uniformizer is sent to a geometric Frobenius element. 
Similarly,
let $\Art_E$ denote the Artin map for $E$. Given an ideal $\mf{c}\subseteq\Or_F$, let $E[\mf{c}]$ denote the ring class field of $E$ associated with the order $\Or_{\mf{c}}:=\Or_F+\mf{c}\Or_E$. 

\subsection{Function spaces}
Let $X$ be a td-space.\footnote{locally compact, Hausdorff, and totally disconnected topological space} Given a ring $R$, let $C_c^\infty(X,R)$ denote the space of $R$-valued locally constant and compactly supported functions on $X$. This is an inductive limit of free $R$-modules. If $S\subseteq X$, then $\indf[S]$ will denote the indicator function of $S$.

Suppose $X$ has the form $H\backslash G$, where $H$ is a closed subgroup of a td-group $G$, then $X$ carries a right action of $G$. Suppose $R$ is a field of characteristic 0. Fix $R$-valued Haar measures $\mu_H$ and $\mu_G$ on $H$ and $G$ respectively. Let $\mu_X$ be their quotient measure on $X$. This defines a natural map
\[
  I_H:C_c^\infty(G,R)\to C_c^\infty(X,R),\quad \phi\mapsto\left(x\mapsto\int_{H} \phi(h\tilde{x})dh\right).
\]
where $\tilde{x}$ is any lift of $x\in X$ to $G$. If $U\subseteq G$ is open compact, then by direct computation,
\begin{equation}\label{eq:IH}
  I_H(\mathbf{1}[gU])=\mu_H(H\cap gUg^{-1})\mathbf{1}[HgU].
\end{equation}
This expression shows that $I_H$ is surjective. Therefore, $I_H$ identifies $C_c^\infty(X,R)$ with the $H$-coinvariants of $C_c^\infty(G,R)$, where $H$ acts by left translation.

Since both $G$ and $X$ have right $G$-actions, the above function spaces are left $G$-representations. In particular, note that for $g\in G$ and $U$ an open subset of either $G$ or $X$, this action is
\[
  g\cdot\indf[U]=\indf[Ug^{-1}].
\]
With the choice of $\mu_G$, the function spaces also carry left $C_c^\infty(G,R)$-module structures. In particular, if $\phi\in C_c^\infty(X,R)$ and $f\in C_c^\infty(G,R)$, then we have
\[
  (f\cdot\phi)(x)=\int_{G}\phi(xg)f(g)dg.
\]
The map $I_H$ is left $G$-equivariant, so it is also a map of left $C_c^\infty(G,R)$-modules.

Let $K'\subseteq K\subseteq G$ be two open compact subgroups. Define the trace map by
\[
  \Tr_K^{K'}:C_c^\infty(X,R)^{K'}\to C_c^\infty(X,R)^{K},\ \phi\mapsto\sum_{\gamma\in K/K'}\gamma\cdot\phi.
\]
It is equivalent to the action of the Hecke operator $\mu_G(K')^{-1}\indf[K]$, but this definition makes it visibly independent of the choice of Haar measures.
\begin{lemma}\label{lem:Trace}
  Let $K'\subseteq K,K''$ be three open compact subgroups of $G$. Let $x\in X$, then
  \[
    \Tr_K^{K'}(\indf[xK''])=\indf[xK]\cdot[K'':K']\cdot[\Stab_{K}(x):\Stab_{K''}(x)].
  \]
\end{lemma}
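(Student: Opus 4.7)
The plan is to unfold the definition of $\Tr_K^{K'}$ and exploit the $K$-invariance of the result. First I would write
\[
  \Tr_K^{K'}(\indf[xK''])=\sum_{\gamma\in K/K'}\gamma\cdot\indf[xK'']=\sum_{\gamma\in K/K'}\indf[xK''\gamma^{-1}].
\]
The statement implicitly assumes $K''\subseteq K$: otherwise simple examples (say $K''\supsetneq K$ with trivial stabilizers) show the formula fails, and in any case the index $[\Stab_K(x):\Stab_{K''}(x)]$ need not refer to a genuine subgroup index. Under this assumption each $xK''\gamma^{-1}$ lies in $xK\gamma^{-1}=xK$, so the sum is supported on the single $K$-orbit $xK$. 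Being $K$-invariant by construction, it must equal $c\cdot\indf[xK]$ for some constant $c$ to be determined.

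To pin down $c$, I would evaluate at $x\in X$. A representative $\gamma\in K$ contributes iff $x\in xK''\gamma^{-1}$, equivalently $\gamma\in\Stab_G(x)\cdot K''$. Intersecting with $K$ and using $K''\subseteq K$ gives $\gamma\in\Stab_K(x)\cdot K''$, a set which is a union of right $K'$-cosets since $K'\subseteq K''$. Hence
\[
  c=[\Stab_K(x)\cdot K'':K'].
\]

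The remainder is a routine index calculation. Factoring through $K''$ and applying the orbit--stabilizer identity $[AB:B]=[A:A\cap B]$ with $A=\Stab_K(x)$ and $B=K''$ yields
\[
  [\Stab_K(x) K'':K']=[\Stab_K(x):\Stab_K(x)\cap K'']\cdot[K'':K'],
\]
and the intersection $\Stab_K(x)\cap K''$ equals $\Stab_{K''}(x)$ since $K''\subseteq K$. Combining gives the stated formula. The only technical care required is that all the above indices be interpreted against a common sufficiently small normal open subgroup $N\trianglelefteq K$ contained in $K'\cap\Stab_{K''}(x)$, so that everything reduces to finite-group arithmetic; this is standard and poses no real obstacle.
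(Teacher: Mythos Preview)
Your proof is correct. The paper takes a different route: it first lifts $x$ to $\tilde{x}\in G$, where the right $K$-action is free and the analogous identity $\Tr_K^{K'}(\indf[\tilde{x}K''])=[K'':K']\cdot\indf[\tilde{x}K]$ is immediate; it then pushes this down to $X$ via the coinvariant map $I_H$, which by formula~\eqref{eq:IH} introduces the volume factors $\mu_H(H\cap\tilde{x}K\tilde{x}^{-1})$ and $\mu_H(H\cap\tilde{x}K''\tilde{x}^{-1})$, and their ratio is identified with the stabilizer index $[\Stab_K(x):\Stab_{K''}(x)]$. Your argument instead stays on $X$ throughout and counts right $K'$-cosets directly. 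The paper's approach is more conceptual---it explains the stabilizer index as the defect of $I_H$ from being an isomorphism, in line with the paper's broader use of $I_H$---while yours is more elementary, avoids the intermediate passage through $\Q$-coefficients and Haar measures, and makes explicit the hypothesis $K''\subseteq K$ that the paper's first displayed identity also requires.
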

\begin{proof}
  Let $\tilde{x}\in G$ denote a lift of $x$, then we have the relation
  \[
    \Tr_K^{K'}(\indf[\tilde{x}K''])=\indf[\tilde{x}K]\cdot[K'':K']
  \]
  in $C_c^\infty(G,\Q)$. The coinvariant map $I_H$ is $G$-equivariant, so applying to to both sides gives
  \[
    \Tr_K^{K'}(\indf[xK''])\cdot\mu_H(H\cap\tilde{x}K''\tilde{x}^{-1})=\indf[xK]\cdot[K'':K']\mu_H(H\cap\tilde{x}K\tilde{x}^{-1}).
  \]
  This is a relation in $C_c^\infty(X,\Q)$. Dividing both sides by the Haar measure term gives
  \[
    \Tr_K^{K'}(\indf[xK''])=\indf[xK]\cdot[K'':K']\cdot\frac{\mu_H(H\cap\tilde{x}K\tilde{x}^{-1})}{\mu_H(H\cap\tilde{x}K''\tilde{x}^{-1})}.
  \]
  It remains to observe that $H\cap\tilde{x}K\tilde{x}^{-1}=\tilde{x}\Stab_K(x)\tilde{x}^{-1}$, so the quotient of volumes is equal to the (generalized) index $[\Stab_K(x):\Stab_{K''}(x)]$.
\end{proof}

\subsection{Groups and representations}
\subsubsection{Hermitian spaces}\label{ss:Herm}
Let $\mathtt{V}_n$ be a Hermitian space of dimension $n$ defined with respect to $E/F$. Fix an orthogonal basis $(e_1,\cdots,e_n)$ for $\mathtt{V}_n$ with the following properties:
\begin{itemize}
  \item $\bra e_1,e_1\ket\in F$ is negative in the distinguished embedding $\infty_*$, and positive in the other embeddings.
  \item If $i>1$, then $\bra e_i,e_i\ket$ is totally positive.
\end{itemize}
Here $\bra -,-\ket$ is the implicit Hermitian pairing on $\mathtt{V}_n$.
This forces $\mathtt{V}_n$ to have signature $(n-1,1)$ at $\infty_*$ and $(n,0)$ at the other archimedean places. In other words,
$\mathtt{V}_n$ is standard indefinite in the sense of \cite[Definition 3.2.1]{LTXZZ}.

Let $\mathtt{V}_{n+1}=\mathtt{V}_n\oplus Ee_{n+1}$ equipped with a Hermitian pairing extending $\bra -,-\ket$ such that $e_{n+1}$ is a vector with a totally positive norm and $(e_1,\cdots,e_n,e_{n+1})$ is an orthogonal basis. 
Then $\mathtt{V}_{n+1}$ is also standard indefinite.

\subsubsection{Unitary groups}
Let $H=\U(\mathtt{V}_n)$ be the unitary group of $\mathtt{V}_n$, namely the algebraic group over $F$ consisting of the isometries of $\mathtt{V}_n$. If $\ell$ is a place of $F$ which splits in $E$, then the choice of a place $\lambda$ of $E$ above $\ell$ gives an identification
\[
  H\times_F F_\ell\simeq\GL_n(F_\ell)
\]
using the basis $(e_1,\cdots,e_n)$. This also applies to the other groups introduced below.

Let $G=\U(\mathtt{V}_n)\times\U(\mathtt{V}_{n+1})$ be the product of the associated unitary groups. The containment $\mathtt{V}_n\subseteq\mathtt{V}_{n+1}$ gives rise to an embedding $\U(\mathtt{V}_n)\hookrightarrow\U(\mathtt{V}_{n+1})$. Let $\triangle:H\hookrightarrow G$ be the graph of this embedding.

It is useful to introduce an augmented group. Let $\mathtt{V}_1=\det(\mathtt{V}_n)$, a 1-dimensional Hermitian space. The unitary group $\U(\mathtt{V}_1)$ is the norm one subgroup of $\Res_{E/F}\G_m$. Form the group
\[
  \widetilde{G}=\U(\mathtt{V}_n)\times\U(\mathtt{V}_{n+1})\times\U(\mathtt{V}_1),
\]
with the embedding $G\hookrightarrow\widetilde{G}$ sending $(g_n,g_{n+1})$ to $(g_n,g_{n+1},\det(g_n))$. Let $\widetilde{\triangle}$ denote the composite embedding $H\hookrightarrow\tG$. Explicitly, $\widetilde{\triangle}(h)=(\triangle(h),\det(h))$. We will typically treat $H$ as a subgroup of $G$ or $\widetilde{G}$ and not mention $\triangle$ or $\widetilde{\triangle}$ explicitly.

Let $X=H\backslash G$ as an affine algebraic variety, with the right $G$-action by right translation. For any $F$-algebra $R$, we have $X(R)=H(R)\backslash G(R)$, and it can be identified with $\U(\mathtt{V}_{n+1})(R)$ via the map $u\mapsto (1,u)\in G$.
Let $\widetilde{X}=H\backslash\widetilde{G}$, then we get an induced closed embedding $X\hookrightarrow\widetilde{X}$.

\subsubsection{Integral representations}\label{ss:IntCoeff}
We now fix notations for lattices in algebraic representations of $p$-adic groups. The discussion of this subsection applies to any reductive group over $F$.

Let $\bH=\Res_{F/\Q}H$ be the algebraic group over $\Q$ obtained by restriction of scalar, so
\[
  \bH(\Q_p)=\prod_{\q|p}H(F_\q).
\]
Let $P$ be a fixed minimal parabolic of $\bH$, and let $A$ be a maximally split torus of $\bH$ contained in $P$. This defines a set $\Delta^+$ of positive roots of $A$. Let
\[
  A^-=\{a\in A\,|\,v_p(\alpha(a))\leq 0\text{ for all }\alpha\in\Delta^+\},
\]
and similarly define $A^+\subseteq A$ using the condition $v_p(\alpha(a))\geq 0$.

Let $K_H\subseteq\bH(\Q_p)$ be a subgroup which has an Iwahori decomposition $K_H=\overline{N}_K\cdot M_K\cdot N_K$ with respect to $P$, where the notations are set-up so that for all $a\in A^-$,
\[
  a^{-1}N_K a\subseteq N_K,\quad a\overline{N}_K a^{-1}\subseteq\overline{N}_K,
\]
and moreover $M_K$ is contained in the centralizer of $A$ in $P$. By \cite[Proposition 1.4.4]{CasselmanBook}, every neighbourhood of the identity contains such a group $K_H$.\footnote{Our $A^+$ is Casselman's $A^-$.} If $a,b\in A^-$, then
\begin{align}
  (K_H a K_H)\cdot(K_H b K_H)&=K_H a(\overline{N}_K M_K N_K) b K_H \notag \\
  &=K_H(a\overline{N}_K a^{-1})M_K(b^{-1}N_K b)K_H \label{eq:2} \\
  &\subseteq K_H ab K_H. \notag
\end{align}
The reverse inclusion clearly holds, so it is an equality. Let
\begin{equation}\label{eq:H-}
  H_p^-=\bigcup_{a\in A^-}K_H a K_H.
\end{equation}
The above calculations show that this is an open submonoid of $\bH(\Q_p)$. Define $H_p^+$ analogously, so $H_p^+$ and $H_p^-$ are mutual inverses in $\bH(\Q_p)$.

Let $\Phi$ be a finite extension of $\Q_p$ so that $\bH$ splits over $\Phi$. We will view $\Phi$ as a subfield of $\bar{\Q}_p$. Let $\Or$ be its ring of integers, and let $t$ be a uniformizer. Finally, let $\nu_\Phi$ denote the additive valuation on $\Phi$ normalized so that $\nu_\Phi(t)=1$. Fix a positive Weyl chamber of $\bH\times_{\Q_p}\Phi$ containing $\Delta^+$. Given a dominant character $\mu$ of $\bH$, write its associated highest weight representation as
\[
  \xi_\mu:\bH(\Q_p)\to\GL(V_\mu).
\]
Further fix an $\Or$-lattice $\Lambda_\mu\subseteq V_\mu$ which is invariant under $\xi_\mu(K_H)$. 
\begin{lemma}\label{lem:IntRepn}
  If $a\in A^-$, then $t^{-\nu_\Phi(\mu(a))}\xi_\mu(a)\in\End_\Or(\Lambda_\mu)$. Moreover, this extends to a monoid homomorphism
  \begin{equation}
    \xi_\mu^\circ:H_p^-\to\End_\Or(\Lambda_\mu).
  \end{equation}
\end{lemma}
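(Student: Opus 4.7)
The plan is to reduce the claim to a weight-by-weight calculation, and then bootstrap integrality of $\xi_\mu(a)$ for $a\in A^-$ to a monoid homomorphism on all of $H_p^-$ using the product formula \eqref{eq:2}.

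For the first assertion, recall the standard fact that every weight $\chi$ of $V_\mu$ has the form $\chi = \mu - \sum_{\alpha\in\Delta^+}c_\alpha\alpha$ with $c_\alpha\in\Z_{\geq 0}$. Since $a\in A^-$ satisfies $\nu_\Phi(\alpha(a))\leq 0$ for every positive root $\alpha$, evaluating at $a$ gives
\[
  \nu_\Phi(\chi(a)) = \nu_\Phi(\mu(a)) - \sum_\alpha c_\alpha\nu_\Phi(\alpha(a)) \geq \nu_\Phi(\mu(a)),
\]
so the highest weight $\mu$ realizes the minimal valuation. To upgrade this pointwise eigenvalue bound to a lattice statement, I would use that the Iwahori-factor $M_K\subseteq K_H$ lies in the centralizer of $A$, together with (after a harmless shrinking of $K_H$) the existence of a regular integral element $a_0\in A(\Or)\cap M_K$ whose weights $\chi(a_0)$ are pairwise distinct modulo $t$. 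A Cayley--Hamilton/Lagrange-interpolation projector argument then forces $\Lambda_\mu$ to split as a direct sum of weight sublattices $\bigoplus_\chi(\Lambda_\mu\cap V_\mu^\chi)$, and the preceding valuation bound gives $\xi_\mu(a)\Lambda_\mu\subseteq t^{\nu_\Phi(\mu(a))}\Lambda_\mu$, i.e.\ $t^{-\nu_\Phi(\mu(a))}\xi_\mu(a)\in\End_\Or(\Lambda_\mu)$.

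To extend to all of $H_p^-$, for $h\in K_H a K_H$ with $a\in A^-$, set
\[
  \xi_\mu^\circ(h):=t^{-\nu_\Phi(\mu(a))}\xi_\mu(h).
\]
Well-definedness rests on the fact that $\nu_\Phi(\mu(a))$ depends only on the double coset $K_H a K_H$ (a standard consequence of the Cartan decomposition for $\bH$); integrality then follows from the first part combined with the $K_H$-stability of $\Lambda_\mu$. Multiplicativity is immediate from \eqref{eq:2}: if $h_i\in K_H a_i K_H$ for $i=1,2$, then $h_1h_2\in K_H a_1a_2 K_H$, and $\mu$ being a character gives $\nu_\Phi(\mu(a_1a_2)) = \nu_\Phi(\mu(a_1)) + \nu_\Phi(\mu(a_2))$, so the scaling factors combine correctly.

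The main obstacle is the integrality step: verifying that a \emph{general} $K_H$-stable lattice $\Lambda_\mu$ admits a weight-space decomposition. This is transparent for the canonical Kostant lattice but requires justification in general; the cleanest route, as sketched, is to arrange that $K_H$ contains a regular $A(\Or)$-element whose weight eigenvalues are distinct mod $t$. Alternatively, one can observe that any two $K_H$-stable lattices become commensurable after enlarging $K_H$, and absorb the bounded discrepancy into the scalar $t^{-\nu_\Phi(\mu(a))}$ (at the cost of a uniform shift), which is harmless for the monoid homomorphism statement.
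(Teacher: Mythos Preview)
Your core argument---the weight-by-weight valuation estimate and the extension to $H_p^-$ via the double-coset product formula \eqref{eq:2}---is exactly what the paper does. The paper's proof is two sentences: it evaluates $t^{-\nu_\Phi(\mu(a))}\xi_\mu(a)$ on a weight vector, observes the exponent $\nu_\Phi(\mu'\mu^{-1}(a))$ is nonnegative because $\mu$ is highest and $a\in A^-$, and declares the extension ``clear'' from \eqref{eq:2}. So on the parts that overlap, you and the paper agree.

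Where you go further is in worrying whether a general $K_H$-stable lattice $\Lambda_\mu$ splits into weight sublattices---and you are right to worry, but your proposed repairs do not work. Shrinking $K_H$ shrinks $M_K$, making it \emph{harder}, not easier, to find $a_0\in A(\Or)\cap M_K$ with weights pairwise distinct mod $t$. Concretely, take $\bH=\GL_2$ over $\Q_p$, $K_H$ the principal congruence subgroup mod $p$ (which has an Iwahori factorization), and $\Lambda = \Z_p(e_1+e_2)\oplus \Z_p\cdot p e_2$ in the standard representation. Every element of $M_K$ has both diagonal entries $\equiv 1\pmod p$, so no separation mod $t$ is possible; indeed this $\Lambda$ does not decompose into weight sublattices. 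Worse, the lemma itself fails here: $a=\diag(1,p)\in A^-$ has $\nu_\Phi(\mu(a))=0$, yet $a\cdot(e_1+e_2)=e_1+pe_2\notin\Lambda$. Your commensurability alternative only yields a map $\Lambda_\mu\to t^{-2N}\Lambda_\mu$, not an endomorphism of $\Lambda_\mu$, so that route does not close the gap either.

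The honest resolution is not to prove decomposition for arbitrary $\Lambda_\mu$ (which is false) but to \emph{choose} $\Lambda_\mu$ weight-graded from the outset---e.g.\ an admissible lattice generated over $\Or$ by a weight basis. Such a choice is always available, is $K_H$-stable for suitable $K_H$, and makes both your argument and the paper's go through verbatim; it is also all that is needed for the downstream constructions in the paper.
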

\begin{proof}
  The extension part is clear in view of equation~\eqref{eq:2} and the comment following it. For integrality, note that for a vector $v$ of weight $\mu'$, we have
  \[
    t^{-\nu_\Phi(\mu(a))}\xi_\mu(a)v=t^{-\nu_\Phi(\mu(a))}\mu'(a)v=t^{\nu_\Phi(\mu'\mu^{-1}(a))}\cdot(\text{unit})\cdot v.
  \]
  Since $a\in A^-$ and $\mu$ is the highest weight, the exponent of $t$ is non-negative, as required.
\end{proof}

\subsubsection{Branching law}\label{ss:Branching}
In the standard pinning for $\GL_n$, the dominant characters can be labelled by non-increasing tuples $\underline{a}\in\Z^n$ by the formula
\[
  \mu_{\underline{a}}:\diag(t_1,\cdots,t_n)\mapsto \prod_{i=1}^n t_i^{a_i}.
\]
Denote the corresponding highest weight representation by $(\xi_{\underline{a}},V_{\underline{a}})$. Define $-\underline{a}=(-a_n,-a_{n-1},\cdots,-a_1)$, so $V_{-\underline{a}}=V_{\underline{a}}^\vee$. 

Using the basis $(e_1,\cdots,e_n)$ for the Hermitian space, we have an identification
\[
  \bH\times_\Q\bar{\Q}\simeq\prod_{\Sigma_\infty^+\ni\tau:E\hookrightarrow\bar{\Q}}\GL_n,
\]
so algebraic representations of $\bH$ are external tensor products of $V_{\underline{a}}$ for various choices of $\underline{a}$. We write the dominant characters for $\bH$ as a $g$-tuple $\underline{\bs{a}}=(\underline{a}^{\tau})$, where $\tau$ ranges over all embeddings $E\hookrightarrow\bar{\Q}$ in the fixed CM type $\Sigma_\infty^+$. The same discussion applies to the group $\G=\Res_{F/\Q}G$, so its dominant characters is written as a pair
\[
  (\underline{\bs{a}},\underline{\bs{b}})\in \Z^{n\times g}\times\Z^{(n+1)\times g}.
\]
Applying the classical branching law one embedding at a time gives the following.

\begin{prop}\label{prop:Branching}
  If $\underline{\bs{a}}\in\Z^{n\times g}$ and $\underline{\bs{b}}\in\Z^{(n+1)\times g}$ satisfy the inequalities 
  \[
    b_1^\tau\geq a_1^\tau\geq b_2^\tau\geq\cdots\geq a_n^\tau\geq b_{n+1}^\tau
  \]
  for all $\tau:E\hookrightarrow\bar{\Q}$ in $\Sigma_\infty^+$,
  then the space
  \[
    \Hom_{\bH}(1,V_{-\underline{\bs{a}}}\boxtimes V_{\underline{\bs{b}}})
  \]
  is one-dimensional.
\end{prop}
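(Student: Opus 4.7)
The plan is to reduce the statement to the classical Gelfand--Tsetlin branching law for the pair $(\GL_n,\GL_{n+1})$, using the decomposition of $\bH$ and $\G$ over $\bar{\Q}$ according to the CM type $\Sigma_\infty^+$ that was recorded just before the statement.

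First I would pass to $\bar{\Q}$-coefficients, which is harmless since $\Hom$ commutes with flat base change. Under the identifications
\[
\bH\times_{\Q}\bar{\Q}\simeq\prod_{\tau\in\Sigma_\infty^+}\GL_n,\qquad \G\times_{\Q}\bar{\Q}\simeq\prod_{\tau\in\Sigma_\infty^+}(\GL_n\times\GL_{n+1})
\]
coming from the fixed orthogonal basis $(e_1,\ldots,e_{n+1})$, the diagonal embedding $\triangle:\bH\hookrightarrow\G$ becomes, factor by factor, the graph of the standard block inclusion $\GL_n\hookrightarrow\GL_{n+1}$, $g\mapsto\mathrm{diag}(g,1)$ (stabilizing $e_{n+1}$). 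Correspondingly, the $\G$-representation decomposes as an external tensor product
\[
V_{-\underline{\bs{a}}}\boxtimes V_{\underline{\bs{b}}}\;\simeq\;\bigotimes_{\tau\in\Sigma_\infty^+}\bigl(V_{-\underline{a}^\tau}\boxtimes V_{\underline{b}^\tau}\bigr),
\]
with each copy of $\GL_n$ inside $\bH\times_{\Q}\bar{\Q}$ acting on its own tensor slot.

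Taking $\bH$-invariants therefore splits as a tensor product
\[
\Hom_{\bH}\bigl(1,V_{-\underline{\bs{a}}}\boxtimes V_{\underline{\bs{b}}}\bigr)\;\simeq\;\bigotimes_{\tau\in\Sigma_\infty^+}\Hom_{\GL_n}\bigl(V_{\underline{a}^\tau},V_{\underline{b}^\tau}\bigr),
\]
where I have used the identification $V_{-\underline{a}^\tau}=V_{\underline{a}^\tau}^{\vee}$. It now suffices to show each factor on the right is one-dimensional, which is precisely the classical branching law: the restriction of $V_{\underline{b}^\tau}$ from $\GL_{n+1}$ to $\GL_n$ contains $V_{\underline{a}^\tau}$ with multiplicity one precisely when the interlacing $b_1^\tau\geq a_1^\tau\geq\cdots\geq a_n^\tau\geq b_{n+1}^\tau$ holds, and with multiplicity zero otherwise (Gelfand--Tsetlin). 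Under the hypothesis the interlacing holds at every $\tau$, so each factor is one-dimensional, as is the tensor product.

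The one non-routine point is checking the factor-by-factor form of $\triangle$ after base change; this follows from the choice of orthogonal basis in Subsection~\ref{ss:Herm}, which identifies $\mathtt{V}_n$ with the span of $e_1,\ldots,e_n$ inside $\mathtt{V}_{n+1}$ and so at every $\tau$ turns $\U(\mathtt{V}_n)\hookrightarrow\U(\mathtt{V}_{n+1})$ into the block subgroup stabilizing $e_{n+1}$, matching the standard inclusion $\GL_n\hookrightarrow\GL_{n+1}$ used above.
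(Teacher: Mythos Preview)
Your argument is correct and matches the paper's approach exactly: the paper simply remarks that the proposition follows by ``applying the classical branching law one embedding at a time,'' which is precisely the factor-by-factor Gelfand--Tsetlin reduction you have written out in detail.
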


\begin{definition}\label{def:Interlace}
  An algebraic representation $\xi_\mu$ with highest weight $\mu=(\underline{\bs{a}},\underline{\bs{b}})$ is \emph{perfectly interlacing} if the condition of the above proposition holds.
\end{definition}

In this case, the lattice $\Lambda_\mu$ fixed in the previous subsection contains an $\bH$-fixed vector $v_*$ that is non-vanishing in $\Lambda_\mu/t\Lambda_\mu$. Such a $v_*$ is unique up to $\Or^\times$-multiple. Define the branching map by
\[
  \mathtt{br}:\Or\to\Lambda_\mu,\quad 1\mapsto v_*.
\]
Choose the open compact subgroups $K_H$ and $K_G$ to be compatible under the diagonal embedding. Then for $h\in H_p^-$, its image $\triangle(h)$ lies in $G_p^-$. Let $H_p^-$ act on $\Or$ via the monoid homomorphism
\[
  H_p^-\to (\Or,\times),\quad h\mapsto t^{-(\nu_\Phi\circ\mu\circ\triangle)(h)}.
\]
By construction, this makes the branching morphism $H_p^-$-equivariant. This additional constant factor will not be relevant to us, since we will never consider Hecke operators from $H$ except in the proof of Proposition~\ref{prop:Cyc}.

Finally, we extend a representation of $G$ to one for $\tG$ by making the augmentation factor act trivially. The branching law and branching morphism goes through without any change.

\subsection{Shimura varieties}
\subsubsection{Shimura data}
Define an algebraic homomorphism
\[
  h_n:\Res_{\C/\R}\G_m\to\Res_{F/\Q}\U(\mathtt{V}_n)\times_\Q\R,\quad z\mapsto \left(\big(\begin{smallmatrix}\con{z}/z & \\ & \mathrm{id}_{n-1}\end{smallmatrix}\big)_{\infty_*},\mathrm{id}_n,\cdots,\mathrm{id}_n\right).
\]
Let $X_n$ be the conjugacy class of $h_n$, then the pair $(\U(\mathtt{V}_n),X_n)$ is a Shimura datum of abelian type with reflex field $E$, viewed as a subfield of $\C$ by $\iota_\infty$. This is the Shimura datum used in \cite{LTXZZ} and conjugate to the one in \cite{GGPConjecture}.

If $U\subseteq\U(\mathtt{V}_n)(\A_f)$ is a neat open compact subgroup, then denote the associated Shimura variety by $\Sh_n^\circ(U)$. It is a smooth algebraic variety over $E$ of dimension $n-1$. As $U$ varies, we obtain an inverse system of varieties with finite \'etale transition maps. This system will be denoted by $\Sh_n^\circ$. Let $g\in\U(\mathtt{V}_n)(\A_f)$, then we have a right translation map
\[
  T_g:\Sh_n^\circ(K)\to\Sh_n^\circ(g^{-1}Kg)
\]
satisfying $T_{gh}=T_h\circ T_g$. This gives a right action of $\U(\mathtt{V}_n)(\A_f)$ on the inverse system $\Sh_n^\circ$.

\begin{remark}\label{remark:neat}
  In what follows, we will sometimes implicitly assume the level structure is neat. This can be arranged by choosing a sufficiently small open compact subgroup at finitely many places and requiring our level to be contained in them \cite[Remark 1.4.1.9]{Lan-PEL}. This does not impact the discussion.
\end{remark}

The Shimura data $(\U(\mathtt{V}_{n+1}),X_{n+1})$ and $(\U(\mathtt{V}_1),X_1)$ can be defined similarly. By taking products, we get Shimura data $(G,X_G)$ and $(\widetilde{G},X_{\widetilde{G}})$. Let $(H,X_H)=(\U(\mathtt{V}_n),X_n)$, then the embeddings defined above induces embeddings of Shimura data
\[
  (H,X_H)\to (G,X_G)\to (\widetilde{G},X_{\widetilde{G}}).
\]
We adopt similar notations for Shimura varieties. Their reflex fields are all contained in $E$, so we view them as varieties over $E$. If $K_{\tG}\subseteq \tG(\A_f)$ and $K_H\subseteq H(\A_f)$ are both open compact, and $K_H\subseteq K_\tG\cap H(\A_f)$, then we get a morphism
\begin{equation}\label{eq:embedding}
  \iota_{K_H,K_\tG}:\Sh_H^\circ(K_H)\to\Sh_{\tG}^\circ(K_\tG),
\end{equation}
which is still defined over $E$. Moreover, for each $K_H$, there exists a $K_\tG$ such that this morphism is a closed embedding, by \cite[Proposition 1.15]{Deligne-Bourbaki}. We summarize these properties by saying there is a closed embedding of inverse systems $\Sh_H^\circ\hookrightarrow\Sh_{\tG}^\circ$.

\subsubsection{Extensions of the base field}\label{ss:Extension}
We now consider the Shimura set $\Sh_1$ more carefully. By Hilbert's theorem 90, we have the following isomorphism
\[
  E^\times\A_F^\times\backslash\A_E^\times/E_\infty^\times\iso \Sh_1(\C)=E^{\times,1}\backslash \A_{E,f}^{\times,1},\quad s\mapsto\frac{\con{s}}{s},
\]
where the superscript 1 means the norm one elements. Given an ideal $\mf{c}$ of $F$, recall that $\Or_{\mf{c}}=\Or_F+\mf{c}\Or_E$. Assume that $\mf{c}$ is coprime to 2 times the discriminant ideal of $E/F$, then under the above isomorphism, $\hat{\Or}_{\mf c}^\times$ gets mapped to the open compact subgroup
\[
  J_{\mf{c}}=\{x\in\A_{E,f}^{\times,1}\,|\,x\equiv 1\pmod{\mf{c}}\}\subseteq\U(\mathtt{V}_1)(\A_f).
\]
It follows that we have an isomorphism.
\[
  E^\times\backslash\A_E^\times/\hat{\Or}_{\mf{c}}^\times E_\infty^\times\iso \Sh_1(J_{\mf c})(\C)=E^{\times,1}\backslash \A_{E,f}^{\times,1}/J_{\mf{c}}.
\]
By class field theory, the left hand side can be identified with $\spec E[\mf{c}]$ as schemes over $E$. The resulting action of $\Aut(\C/E)$ agrees with the one given by the Shimura reciprocity law on the right hand side. Therefore, we obtain an isomorphism of schemes over $E$
\[
  \Sh_1(J_{\mf c})\simeq\spec E[\mf{c}].
\]
Consequently,
\begin{equation}\label{eq:Gal1}
  \h^0_\et(\Sh_1(J_{\mf c})_{/\bar{E}},\Z_p)=\Z_p[\Sh_1(J_{\mf c})]=\Ind_E^{E[\mf{c}]}\Z_p.
\end{equation}
Moreover, if $K\subseteq G(\A_f)$ is an open compact subgroup, then
\begin{equation}\label{eq:Sh1}
  \Sh_{\widetilde{G}}(K\times J_{\mf c})\simeq \Sh_G(K)\times_E E[\mf{c}].
\end{equation}
This will be used to interpret cycles on $\Sh_{\widetilde{G}}$ as cycles on $\Sh_G$ defined over field extensions of $E$.

The Hecke algebra for $\tG$ decomposes as
\[
  C_c^\infty(\tG(\A_f),\Z)\simeq C_c^\infty(G(\A_f),\Z)\otimes_\Z C_c^\infty(\U(\mathtt{V}_1)(\A_f),\Z).
\]
Under the isomorphism~\eqref{eq:Sh1}, the action of a Hecke operator $\sh{T}\otimes\indf[xJ_{\mf{c}}]$ on the left hand side becomes $\sh{T}\otimes\Art_E(s)$ on the right hand side, where $s\in\A_E^\times$ is any element such that $x=\con{s}/s$.

\subsubsection{Coefficients}
We follow the formalism in \cite[\S III.2]{HTLanglands} to construct $p$-adic \'etale local systems on Shimura varieties attached to the lattices considered in Subsection~\ref{ss:IntCoeff}. The discussion only assumes $H$ is a reductive group over $F$ with a Shimura variety. In particular, it also applies to $\tG$.

Fix an open compact subgroup of $\bH(\Q_p)=\prod_{\q|p}H(F_\q)$ with an Iwahori factorization. Using it, define
\[
  H(\A_f)^\pm:=H(\A_f^p)\times H_p^\pm,
\]
where the notations are as in Subsection~\ref{ss:IntCoeff}. Let $\xi^\circ:H_p^-\to\End_\Or(\Lambda)$ be one of the monoid representations constructed in Lemma~\ref{lem:IntRepn}. By letting $H(\A_f^p)$ act trivially, it can be extended to a monoid homomorphism $H(\A_f)^-\to\End_\Or(\Lambda)$, which we continue to denote by $\xi^\circ$.

Let $K\subseteq H(\A_f)$ be an open compact subgroup contained in $H(\A_f)^-$. The inverse system of varieties $\{\Sh_H(K')\}_{K'\subseteq K}$ forms a Galois covering of $\Sh_H(K)$ whose Galois group is $K$. Let $\mathbb{L}_\xi^{(K)}$ be the local system corresponding to the restriction $\xi^\circ|_{K}$. More precisely, it is defined by
\[
  \mathbb{L}_\xi^{(K)}=\varprojlim_n\mathbb{L}_{\xi,n}^{(K)},
\]
where each $\mathbb{L}_{\xi,n}^{(K)}$ is the $p^n$-torsion local system corresponds to the torsion representation on $\Lambda/p^n\Lambda$. We will often drop the superscript if the level is clear. If $g\in H(\A_f)^-$ and $K,K'$ are two level structures as above such that $g^{-1} K g\subseteq K'$, then we have morphisms
\[
  T_g:\Sh_H(K)\to\Sh_H(K'),\quad \vec{T}_g:T_g^*\mathbb{L}_{\xi}^{(K')}\to\mathbb{L}_{\xi}^{(K)}.
\]
They satisfies the usual cocycle condition $\vec{T}_{gh}=\vec{T}_g\circ T_g^*\vec{T}_h$. We may pass to the infinite level and define
\begin{equation}\label{eq:CohDefn}
  \h^\bullet_\et(\Sh_{H/\bar{E}},\mathbb{L}_\xi):=\varinjlim_{K\subseteq H(\A_f)^-}\h^\bullet_\et\big(\Sh_H(K)_{/\bar{E}},\mathbb{L}_\xi^{(K)}\big),
\end{equation}
where the transition maps are the pullbacks induced by $(T_g,\vec{T}_g)$. Moreover, write
\[
  \h_\cont^\bullet\big(\Sh_H(K),\mathbb{L}_\xi^{(K)}\big)
\]
for the continuous \'etale cohomology of the inverse system $(\mathbb{L}_\xi^{(K)}/p^n\mathbb{L}_\xi^{(K)})_{n\geq 1}$ defined in \cite{JannsenContEt}. We can then define
\begin{equation}\label{eq:ContDefn}
  \h_\cont^\bullet(\Sh_H,\mathbb{L}_\xi):=\varinjlim_{K\subseteq H(\A_f)}\h_\cont^\bullet\big(\Sh_H(K),\mathbb{L}_\xi^{(K)}\big).
\end{equation}
Both $\h_\et^\bullet$ and $\h_\cont^\bullet$ carry natural left $H(\A_f)^-$-actions given by the pullback $T_g^*$.

All the constructions are naturally functorial when there is a morphism of representations $\xi_1^\circ\to\xi_2^\circ$. Moreover, if $\Lambda=\Z_p$ and $\xi^\circ$ is trivial, then the local systems are just the constant sheaf $\Z_p$, and we have transition morphisms for all $g\in H(\A_f)$.

\subsubsection{Cycle class map}
Let $\xi$ be a perfectly interlacing representation for $G$, which we extend trivially to $\tG$. The goal of this subsection is to construct a collection of classes in $\h_\cont^{2n}(\Sh_{\tG},\mathbb{L}_\xi(n))$ labelled by functions on the spherical variety $\tX(\A_f)$. Throughout this subsection, suppose the open compact subgroups $K_{H,p}$ and $K_{\tG,p}$ chosen for the lattice constructions are compatible under the diagonal embedding.

Let $g\in\tG(\A_f)^+$, and let $K\subseteq\tG(\A_f)$ be an open compact subgroup such that
\begin{equation}\label{eq:a}
  K, gKg^{-1}\subseteq\tG(\A_f)^+.
\end{equation}
We will define a class
\[
  cyc(g,K)\in \h^{2n}_\cont(\Sh_\tG(K),\mathbb{L}_\xi)
\]
as follows. Let $K_H^{(g)}=H(\A_f)\cap gKg^{-1}$, then we have two canonical maps of Shimura varieties with local systems
\begin{align*}
  &\iota_{g,K}:\Sh_H(K_H^{(g)})\to \Sh_{\tG}(gKg^{-1}),\quad \vec{\iota}_{g,K}:\iota_{g,K}^*\mathbb{L}_{\xi^\circ}\iso\mathbb{L}_{\xi|_H^\circ}\\
  &T_{g^{-1}}:\Sh_{\tG}(gKg^{-1})\leftarrow\Sh_{\tG}(K),\quad \vec{T}_{g^{-1}}:T_{g^{-1}}^*\mathbb{L}_{\xi}\to\mathbb{L}_\xi.
\end{align*}
This has an induced map on cohomology
\[
  \h^0_\cont(\Sh_H(K_H^{(g)}),\mathbb{L}_{g\xi|_H^\circ})\xrightarrow{\,\iota_{g,K,*}\,}\h^{2n}_\cont(\Sh_\tG(gKg^{-1},\mathbb{L}_{g\xi}(n))\xrightarrow{\,T_{g^{-1}}^*}\h^{2n}_\cont(\Sh_\tG(K),\mathbb{L}_\xi(n))
\]
which we temporarily denote by $\varphi_{g,K}$.
\begin{remark}
  It is not true in general that $\iota_{g,K}$ is a closed embedding, but it always factors as a closed embedding followed by an \'etale covering map, so we may still define the cycle class map $\iota_{g,K,*}$. For details on verifying that this is well-defined and other compatibility properties, see \cite[Appendix A.1]{GrahamShah}.
\end{remark}

In Subsection~\ref{ss:Branching}, we constructed a branching morphism $\mathtt{br}$, which gives rise to map
\[
  \mathtt{br}_{g,*}:\h^0_\cont(\Sh_H(K_H^{(g)}),\Z_p)\to\h^0_\cont(\Sh_H(K_H^{(g)}),\mathbb{L}_{\xi|_H^\circ}).
\]
The left hand side contains the fundamental class $[\Sh_H(K_H^{(g)})]$, as defined in \cite[Theorem 3.23]{JannsenContEt}. We define $cyc(g,K_{\tG})$ to be its image under $\varphi_{g,K}\circ\mathtt{br}_{g,*}$.

\begin{prop}\label{prop:cycle}
Let $\tX(\A_f)^+$ denote the image of $\tG(\A_f)^+$ in $\tX(\A_f)$. There exists a unique left $\tG(\A_f)^-$-equivariant map
\[
  \mathtt{cyc}:C_c^\infty(\tX(\A_f)^+,\Z_p)\to\h^{2n}_\cont(\Sh_\tG,\mathbb{L}_\xi(n))
\]
such that for any open compact subgroup $K\subseteq\tG(\A_f)^+$ and element $g\in\tG(\A_f)^+$ satisfying \eqref{eq:a},
\[
  \mathtt{cyc}(\indf[\bar{g}K])=cyc(g,K),
\]
where the bar on $g$ indicates taking the image in $\tX(\A_f)$.
\end{prop}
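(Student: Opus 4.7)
The strategy is to specify $\mathtt{cyc}$ on the spanning indicators $\indf[\bar g K]$, check the relations among them, and extend by $\Z_p$-linearity. The uniqueness is immediate: given any $\phi \in C_c^\infty(\tX(\A_f)^+, \Z_p)$, by shrinking the level $K$ at $p$ inside $K_{\tG,p}$ one writes $\phi$ as a finite $\Z_p$-combination of indicators $\indf[\bar g K]$ with $g \in \tG(\A_f)^+$; using openness of $\tG(\A_f)^+$ and continuity of conjugation, further shrinking of $K_p$ ensures $gKg^{-1} \subseteq \tG(\A_f)^+$ for each $g$ appearing, so the condition \eqref{eq:a} is satisfied and the map is forced by the specified values.

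For existence, set $\mathtt{cyc}(\indf[\bar g K]) := cyc(g, K)$ viewed in the direct limit \eqref{eq:ContDefn}, and verify compatibility with the two relations among admissible indicators. First, if $\bar g K = \bar{g'} K$ with both $(g,K)$ and $(g',K)$ admissible, then $g' = hgk$ for some $h \in H(\A_f)$, $k \in K$; invariance under $k$ holds because $T_{k^{-1}}$ acts as the identity on $\Sh_\tG(K)$, reducing the claim to $T_{(gk)^{-1}}^* = T_{g^{-1}}^*$, while invariance under $h$ follows from the commutative diagram
\[
  \begin{tikzcd}
    \Sh_H(K_H^{(g)}) \arrow[r, "T_{h^{-1}}"] \arrow[d, "\iota_{g,K}"'] & \Sh_H(K_H^{(hg)}) \arrow[d, "\iota_{hg,K}"] \\
    \Sh_\tG(gKg^{-1}) \arrow[r, "T_{h^{-1}}"'] & \Sh_\tG(hgKg^{-1}h^{-1})
  \end{tikzcd}
\]
combined with the $H$-equivariance of the branching morphism from Section~\ref{ss:Branching} and the fact that $T_{h^{-1}}$ is an isomorphism on both levels. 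Second, if $K' \subseteq K$ and the set-theoretic decomposition is $\bar g K = \bigsqcup_i \bar g_i K'$, then pulling $cyc(g, K)$ back along the finite \'etale cover $\pi: \Sh_\tG(K') \to \Sh_\tG(K)$ and using the scheme-theoretic identity $\pi^{-1}(\iota_{g,K}(\Sh_H(K_H^{(g)}))) = \bigsqcup_i \iota_{g_i, K'}(\Sh_H(K_H^{(g_i)}))$ yields $\pi^*cyc(g, K) = \sum_i cyc(g_i, K')$, which matches the direct-limit identification of $\indf[\bar g K]$ with $\sum_i \indf[\bar g_i K']$. These two compatibilities let $\mathtt{cyc}$ extend $\Z_p$-linearly.

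Finally, the $\tG(\A_f)^-$-equivariance is a direct unfolding of definitions. For $g \in \tG(\A_f)^-$ and admissible $(g', K)$, chosen small enough that $(g'g^{-1}, gKg^{-1})$ is also admissible, one checks that $K_H^{(g'g^{-1})} = K_H^{(g')}$ and the target level $\Sh_\tG((g'g^{-1})(gKg^{-1})(g'g^{-1})^{-1}) = \Sh_\tG(g'Kg'^{-1})$ coincide, so $\iota_{g'g^{-1}, gKg^{-1}} = \iota_{g', K}$. The required identity $\mathtt{cyc}(g \cdot \indf[\bar{g'}K]) = T_g^* \mathtt{cyc}(\indf[\bar{g'}K])$ then reduces to the cocycle identity $T_{(g'g^{-1})^{-1}}^* = T_g^* \circ T_{(g')^{-1}}^*$, which follows from $T_{ab} = T_b \circ T_a$. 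The most delicate step of the argument is the level-refinement compatibility: one must match the $K'$-orbit decomposition of $\bar g K$ with the decomposition of the scheme-theoretic preimage under $\pi$, so that the Hecke coset combinatorics (as packaged in Lemma~\ref{lem:Trace}) agree with the geometric multiplicities along the \'etale cover. This is the integrality point that motivates passing to the coinvariant space $C_c^\infty(\tX(\A_f)^+, \Z_p)$ in the first place.
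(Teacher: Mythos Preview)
Your overall architecture matches the paper's: reduce existence to (1) $H$-invariance of $cyc(g,K)$ in $g$, and (2) compatibility with refinement $K'\subseteq K$; then check $\tG(\A_f)^-$-equivariance by the cocycle relation. Uniqueness, $H$-invariance, and equivariance are handled essentially as in the paper.

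The gap is in the refinement step. Your ``scheme-theoretic identity''
\[
  \pi^{-1}\big(\iota_{g,K}(\Sh_H(K_H^{(g)}))\big)=\bigsqcup_i \iota_{g_i,K'}\big(\Sh_H(K_H^{(g_i)})\big)
\]
is not well-posed as written: $\iota_{g,K}$ lands in $\Sh_\tG(gKg^{-1})$, not $\Sh_\tG(K)$, and $\iota_{g_i,K'}$ lands in $\Sh_\tG(g_iK'g_i^{-1})$, so neither side lives in $\Sh_\tG(K')$. What is actually needed is a fibre-product statement (proper base change), and since $\iota_{g,K}$ need not be a closed immersion (only a closed immersion composed with an \'etale cover), the set-theoretic preimage is not the right object either. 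Even once reformulated correctly, this fibre-product decomposition is exactly the nontrivial content; you assert it, and your final paragraph concedes it is ``the most delicate step,'' but no argument is given.

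The paper supplies that argument by inserting an intermediate level $K$ with $K_1\trianglelefteq K\subseteq K_2$ chosen so that $H(\A_f)\cap K=H(\A_f)\cap K_2$ and $[H(\A_f)\cap K:H(\A_f)\cap K_1]=[K:K_1]$. The passage $K_2\rightsquigarrow K$ is handled by the Galois-cover identity $\mathrm{pr}^*\mathrm{pr}_* = \sum_\gamma \mathrm{pr}^* T_{\gamma^{-1}}^*$ (no base change needed), while the passage $K\rightsquigarrow K_1$ is handled by observing that the index condition forces the relevant square to be Cartesian, so proper base change applies directly to a single component. This two-step decomposition is what your sketch is missing; without it, the refinement compatibility remains unproven.
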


\begin{proof}
  As $(g,K)$ runs over the pairs appearing in the statement of the proposition, the sets $\bar{g}K$ form a basis of open sets in $X(\A_f)^+$. Uniqueness follows from this. For equivariance, suppose $g'\in\tG(\A_f)^+$, then
  \[
    (g')^{-1}\cdot\indf[\bar{g}K]=\indf\big[\bar{g}K\cdot g'\big]=\indf\big[\overline{gg'}((g')^{-1}Kg')\big].
  \]
  On the cycle side, having fixed $g$ and $g'$, we may suppose $K$ is sufficiently small so that all the following maps are defined. The map $\varphi_{gg',(g')^{-1}Kg'}$ is the induced map on cohomology from the following maps
  \[
  \begin{tikzcd}
    \Sh_H(K_H^{(g)})\rar["{\iota_{g,K}}"] & \Sh_\tG(g Kg^{-1}) & \Sh_\tG(K)\lar["{T_{g^{-1}}}"'] & \Sh_\tG((g')^{-1}Kg')\lar["{T_{(g')^{-1}}}"'].
  \end{tikzcd}
  \]
  Since $g^{-1},(g')^{-1}\in\tG(\A_f)^-$, the final two arrows compose to $T_{(gg')^{-1}}$. It follows that $cyc(gg',(g')^{-1}Kg')=T_{(g')^{-1}}^*cyc(g,K)$, as required.
  
  It remains to show that $\mathtt{cyc}$ is well-defined. This reduces to the following two claims.
  \begin{enumerate}
    \item If $h\in H(\A_f)\cap\tG(\A_f)^+$ and $K$ is sufficiently small, then $cyc(hg,K)=cyc(g,K)$.
    \item If $K_1\subseteq K_2$ are two open compact subgroups of $\tG(\A_f)^+$, and
    \[
      \bar{g} K_2=\bigsqcup_{\gamma\in I}\overline{g\gamma}K_1
    \]
    for some finite set $I$, then $cyc(g,K_2)=\sum_{\gamma\in I}cyc(g\gamma, K_1)$.
  \end{enumerate}
  Claim (1) follows by unwinding definitions, using in particular the $H_p^-$-invariance of $\mathtt{br}$. In claim (2), we may assume $K_1\trianglelefteq K_2$ by shrinking $K_1$, and then we may assume $g=1$ by performing a translation on $\Sh_\tG$. Moreover, by claim (1), we may use any set of representatives $I$ in the decomposition.
  
  The strategy now is to decompose the transition from $K_2$ to $K_1$ into two steps. Consider the embedding
  \[
    \frac{H(\A_f)\cap K_2}{H(\A_f)\cap K_1}\hookrightarrow\frac{K_2}{K_1}.
  \]
  Let $K$ be the subgroup of $K_2$ containing $K_1$ corresponding to the image of this embedding, then
  \[
    H(\A_f)\cap K=H(\A_f)\cap K_2,\quad [H(\A_f)\cap K:H(\A_f)\cap K_1]=[K:K_1].
  \]
  From these, we will show the following two equalities:
  \begin{align}
    \indf[\overline{K_2}]&=\sum_{\gamma\in K\backslash K_2}\indf[\overline{K\gamma}]\label{eq:5}\\
    &=\sum_{\gamma\in K\backslash K_2}\indf[\overline{K_1\gamma}].\label{eq:6}
  \end{align}
  For \eqref{eq:5}, suppose $hk\gamma=k'\gamma'$, where $k,k'\in K$, $h\in H(\A_f)$, and $\gamma,\gamma'\in K\backslash K_2$. Then $h=k'\gamma'\gamma^{-1}k^{-1}$ is in both $H(\A_f)$ and $K_2$, so it lands in $K$, which implies $\gamma'\in K\gamma$. For \eqref{eq:6}, Lemma~\ref{lem:Trace} gives the equality
  \[
    \Tr^{K_1}_K\indf[\overline{K_1}]=\indf[\overline{K}]\cdot[K:K_1].
  \]
  This can only happen if all $[K:K_1]$ terms of the left hand side agree, so $\overline{K_1}=\overline{K}$ as subsets of $X(\A_f)$. Translating by $\gamma$ proves \eqref{eq:6}.
  
  Recall we have reduced to the case where $K_1$ is a normal subgroup of $K_2$, so $K_1\gamma=\gamma K_1$ for all $\gamma\in K_2$. Therefore, the above calculations give a decomposition
  \[
    \bar{1}K_2=\bigsqcup_{\gamma\in K\backslash K_2}\bar{\gamma}K_1.
  \]
  Claim (2) reduces to checking that $cyc(1,K_2)=\sum_{\gamma\in K\backslash K_2}cyc(\gamma,K_1)$. Similar to before, we have two steps.
  
  The first step parallels equation~\eqref{eq:5} and goes from $K_2$ to $K$. By construction, we have the following commutative diagram
  \[
  \begin{tikzcd}
     & \Sh_\tG(K)\dar["{\mathrm{pr}_{K_1,K}}"]\\
    \Sh_H(H(\A_f)\cap K)\dar[equal]\rar["{\iota_{1,K}}"] & \Sh_\tG(K)\dar["{\mathrm{pr}_{K,K_2}}"]\\
    \Sh_H(H(\A_f)\cap K_2)\rar["{\iota_{1,K_2}}"] & \Sh_\tG(K_2).
  \end{tikzcd}
  \]
  The two vertical maps on the right are \'etale covering maps, and their composition $\mathrm{pr}_{K_1,K_2}$ is Galois. On the level of sheaves, we have the relation
  \[
    \mathrm{pr}_{K_1,K_2}^*\mathrm{pr}_{K,K_2,*}=\sum_{\delta\in K_2/K}\mathrm{pr}_{K_1,\delta K\delta^{-1}}^*T_\delta^*=\sum_{\gamma\in K\backslash K_2}\mathrm{pr}_{K_1,\gamma^{-1}K\gamma}^*T_{\gamma^{-1}}^*,
  \]
  which can be verified by working \'etale-locally, where it reduces to an elementary calculation about Galois sets. Let $[-]$ denote the cycle class of a variety, then
  \begin{align}
    \mathrm{pr}_{K_1,K_2}^*cyc(1,K_2)&=\mathrm{pr}_{K_1,K_2}^*\iota_{1,K_2,*}[\Sh_H(H(\A_f)\cap K_2)]\notag\\
    &=\mathrm{pr}_{K_1,K_2}^*\mathrm{pr}_{K,K_2,*}\iota_{1,K,*}[\Sh_H(H(\A_f)\cap K)]\label{eq:7}\\
    &=\sum_{\gamma\in K\backslash K_2}\mathrm{pr}_{K_1,\gamma^{-1}K\gamma}^* T_{\gamma^{-1}}^* cyc(1,K).\notag
  \end{align}
  
  The second step goes from $K$ to $K_1$. Let $\gamma\in K\backslash K_2$. Since $K_1\trianglelefteq K_2$, we have $\gamma^{-1}K\gamma\supseteq K_1$, and hence the following commutative diagram
  \[
  \begin{tikzcd}
    \Sh_H(H(\A_f)\cap K_1)\dar\rar["{\iota_{1,K_1}}"]& \Sh_\tG(K_1)\dar["{\mathrm{pr}_{K_1,K}}"]\rar["T_{\gamma}"] & \Sh_\tG(K_1)\dar["{\mathrm{pr}_{K_1,\gamma^{-1}K\gamma}}"]\\
    \Sh_H(H(\A_f)\cap K)\rar["{\iota_{1,K}}"] & \Sh_\tG(K)\rar["{T_{\gamma}}"] & \Sh_\tG(\gamma^{-1}K\gamma).
  \end{tikzcd}
  \]
  The left vertical edge is a covering map of degree $[K:K_1]$ by the choice of $K$, so the left square is Cartesian. It is explained in \cite[Appendix A.1]{GrahamShah} that the cycle class map in continuous \'etale cohomology is compatible with base change. Applying it to the left square, we get
  \[
    \mathrm{pr}_{K_1,K}^* cyc(1,K)=cyc(1,K_1).
  \]
  Using the right square, this implies
  \begin{equation}\label{eq:8}
    \mathrm{pr}_{K_1,\gamma^{-1}K\gamma}^* T_{\gamma^{-1}}^* cyc(1,K)=cyc(\gamma,K_1).
  \end{equation}
  Combining equations~\eqref{eq:7} and \eqref{eq:8} gives the desired relation $cyc(1,K_2)=\sum_{\gamma\in K\backslash K_2}cyc(\gamma,K_2)$.
\end{proof}

\begin{remark}
  Assuming $\xi$ is the trivial coefficient system for simplicity, then we have the following diagram
  \[
  \begin{tikzcd}
    C_c^\infty(\tG(\A_f),\Q_p)\dar["I_{H(\A_f)}"']\drar & \\
    C_c^\infty(\tX(\A_f),\Q_p)\rar["\mathtt{cyc}"] & \h^{2n}_\cont(\Sh_\tG,\Q_p(n)).
  \end{tikzcd}
  \]
  The diagonal arrow is the cycle class map considered in previous works, for example in \cite[\S 8.2]{LSZ}. It multiplies the cycles $cyc(gK)$ by certain volume terms to ensure compatibility, but this destroys integrality. This volume term is exactly the same as the one introduced in the coinvariant map $I_{H(\A_f)}$, so we immediately know that $\mathtt{cyc}$ preserves the integral structure. The above proof is a refinement of the proof in \emph{loc.~cit.} working directly at the level of integral coefficients.
\end{remark}

\subsubsection{Cohomology of unitary Shimura varieties}
We begin with the single unitary group $H$. Let $\pi_n$ be a cuspidal automoprhic representation of $H(\A)=\U(\mathtt{V}_n)(\A)$. Let $\pi_n^\vee$ be its contragredient representation. Let $\xi$ be an algebraic representation of $\Res_{F/\Q} H$.

\begin{definition}\label{defn:weight}
  We say that $\pi_n$ has \emph{weight} $\xi$ if its archimedean component is in the discrete series whose infinitesimal character agrees with that of $\xi$.
\end{definition}

The following definition captures the contribution of its finite part to the cohomology group of $\Sh_n$.
\begin{definition}\label{defn:V}
  Recall that $\iota$ was a fixed identification $\C\iso\bar{\Q}_p$. Define
  \[
    V^{n-1}(\pi_n;\xi):=\Hom_{\bar{\Q}_p[\U(\mathtt{V}_n)(\A_f)]}\big(\iota\pi_n^{\vee,\infty},\h^{n-1}_\et(\Sh_{n/\bar{E}},\mathbb{L}_\xi\otimes_{\Or}\bar{\Q}_p)\big).
  \]
\end{definition}
This a finite-dimensional $\bar{\Q}_p$-vector space with the 
continuous $\bar{\Q}_p$-linear action of $\Gal_E$ coming from the \'etale cohomology term. Its Galois structure is conjecturally described in \cite[\S 10]{KottwitzAA} (when $F\neq\Q$). Based on this, we formulate the following hypothesis.

\begin{hyp}\label{conj:Coh}
  \begin{enumerate}
    \item Up to possibly expanding $\Phi$, the finite part $\iota\pi_n^\infty$ can be defined over $\Phi$, and there is a Galois representation
    \[
      \rho_{\pi_n}:\Gal_E\to\GL_n(\Phi)
    \]
    attached to $\pi_n$, in a sense made precise in \cite[Proposition 3.2.4]{LTXZZ}. In particular, $\rho_{\pi_n}$ is conjugate self-dual up to a twist: $\con{\rho}_{\pi_n}\simeq \rho_{\pi_n}^\vee(1-n)$.
    \item If $\pi_n$ has weight $\xi$, then the semisimplication of $V^{n-1}(\pi_n;\xi)$ is a subrepresentation of $\rho_{\pi_n}$. 
    \item If $\pi_n$ is stable,\footnote{has a cuspidal base change to $\GL_n(\A_E)$} then $V^{n-1}(\pi_n;\xi)$ is isomorphic to $\rho_{\pi_n}$.
  \end{enumerate}
\end{hyp}
\begin{remark}
  \begin{enumerate}
    \item The algebraicity statement of part (1) is a consequence of the existence of base change to $\GL_n(\A_E)$ (expected to be established in the sequel to \cite{KMSW}) and a result of Clozel \cite{Clozel_RA}.
    \item If $F=\Q$, then the Shimura varieties are not proper, and Kottwitz's discussion only applies to the intersection cohomology of its minimal compactification. Nonetheless, we still expect this hypothesis since our representation should be tempered, and these are not expected to contribute to the boundary cohomology (cf.~footnote 9 in \cite[\S 7]{LiuRS}).
    \item In the case of the trivial coefficient, this is \cite[Hypothesis 3.2.9]{LTXZZ}, where the current status of this expectation is also discussed. In particular, when $F\neq\Q$ and $\pi$ is stable, the hypothesis with coefficients will be established in a sequel to \cite{KSZ}.
  \end{enumerate}
\end{remark}

Given any level structure $K_n\subseteq\U(\mathtt{V}_n)(\A_f)$, by the definition of $V^{n-1}(\pi_n;\xi)$, there is a projection map
\[
  \mathtt{pr}_n:\pi_{n,f}^{K_n}\otimes_{\Phi[K_n\backslash \U(\mathtt{V}_n)(\A_f)/K_n]}\h^{n-1}_\et(\Sh_n(K_n)_{/\bar{E}},\mathbb{V}_\xi)\to V^{n-1}(\pi_n;\xi),
\]
where $\pi_{n,f}$ is just another notation for $\pi_n^\infty$. 
We can also perform all the above constructions for $\U(\mathtt{V}_{n+1})(\A)$. 

Let $\pi=\pi_n\otimes\pi_{n+1}$ be a cuspidal automorphic representation of $G(\A)$ such that $\pi_{n}$ and $\pi_{n+1}$ are as above. Define the Galois representation
\[
  V_\pi:=V^{n-1}(\pi_n,\xi_n)\otimes V^n(\pi_{n+1},\xi_{n+1})(n).
\]
Let $K$ be a level structure for $G(\A_f)$, then by applying K\"unneth decomposition to the product $\mathtt{pr}_n\otimes\mathtt{pr}_{n+1}$, we get a map
\[
  \mathtt{pr}:\pi_f^K\otimes_{\Phi[K\backslash G(\A_f)/K]}\h^{2n-1}_\et(\Sh_G(K)_{/\bar{E}},\mathbb{L}_\xi(n))\to V_\pi.
\]
If $v\in\pi_f^K$, then $\mathtt{pr}_v$ will denote the induced projection map $\h^{2n-1}_\et(\Sh_G(K)_{/\bar{E}},\mathbb{V}_\xi(n))\to V_\pi$.

\begin{remark}
  The representation $V_\pi$ is pure of weight $-1$, and we will construct classes in $\h^1(E,V_\pi)$ satisfying certain norm relations. If $\pi$ is stable, then Hypothesis~\ref{conj:Coh} implies that the classes we constructed are related to the Bloch--Kato conjecture for the Galois representation attached to $\pi$.
\end{remark}

\subsubsection{Abel--Jacobi map}\label{ss:AJMap}
Fix an open compact subgroup $K_p\subseteq (\Res_{F/\Q}G)(\Q_p)$ so that $\pi^{K_p}$ is non-trivial. To trivialize our cycles, we choose in addition a neat tame level $K^p\subseteq G(\A_f^p)$ and a tame Hecke operator $\mathtt{t}\in C_c^\infty(K^p\backslash G(\A_f^p)/K^p,\Phi)$ such that
\begin{itemize}
  \item $\mathtt{t}$ annihilates $\h^{2n}_\et(\Sh_G(K^pK_{G,p})_{/\bar{E}},\mathbb{L}_\xi(n))$.
  \item The action of $\mathtt{t}$ on $\pi^{K^p}$ is given by a scalar $\omega_{\mathtt{t}}\in\Phi^\times$.
\end{itemize}
The existence of $K^p$ and $\mathtt{t}$ follows from \cite[Proposition 6.9(1)]{LiLiu-I}, whose proof still applies when coefficient systems are involved (cf.~the discussion in \cite[\S 7]{LiuRS} for the non-compact case). In the rest of this subsection, let $K=K^pK_{p}$. Let $U\subseteq\U(\mathtt{V}_1)(\A_f)$ be an arbitrary open compact subgroup, then $K\times U\subseteq\tG(\A_f)$ defines a level on $\tG$.

We now construct an integral version of the Abel--Jacobi map. Consider the Hochschild--Serre spectral sequence \cite[Remark 3.5(b)]{JannsenContEt}
\[
  E_2^{i,j}=\h^i\big(E,\h^j_\et(\Sh_{\tG}(K\times U)_{/\bar{E}},\mathbb{L}_\xi(n))\big)\Longrightarrow \h^{i+j}_\cont(\Sh_{\tG}(K\times U),\mathbb{L}_\xi(n)).
\]
There are no longer edge maps coming out of the $E_2^{i,j}$-term if $i\leq 1$. Therefore, $E_\infty^{0,2n}$ and $E_\infty^{1,2n-1}$ are submodules of their corresponding entries on page 2. From this, we get maps
\begin{align*}
  &\mathrm{cl}_0:\h^{2n}_\cont(\Sh_{\tG}(K\times U),\mathbb{L}_\xi(n))\to E_\infty^{0,2n}\hookrightarrow\h^{2n}_\et(\Sh_{\tG}(K\times U)_{/\bar{E}},\mathbb{L}_\xi(n)),\\
  &\mathrm{cl}_1:\ker(\mathrm{cl}_0)\to E_\infty^{1,2n-1}\hookrightarrow \h^1\big(E,\h^{2n-1}_\et(\Sh_{\tG}(K\times U)_{/\bar{E}},\mathbb{L}_\xi(n))\big).
\end{align*}

Suppose $\xi$ is a perfectly interlacing representation, so $\mathtt{cyc}$ is defined. Let $f\in C_c^\infty(\tX(\A_f)^+,\Z_p)^{K\times U}$. Then $\mathrm{cl}_0(\mathtt{t}\cdot\mathtt{cyc}(f))=0$ as $\mathtt{t}$ kills the entire degree $2n$ cohomology group for $\Sh_G(K)_{/\bar{E}}$. We can now define
\begin{align*}
  \mathrm{AJ}_\mathtt{t}:C_c^\infty(\tX(\A_f)^+,\Z_p)^{K\times U}&\to\h^1\big(E,d_\mathtt{t}^{-1}\h^{2n-1}_\et(\Sh_{\tG}(K\times U)_{/\bar{E}},\mathbb{L}_\xi(n))\big)\\
  f &\mapsto \omega_{\mathtt{t}}^{-1}\mathrm{cl}_1(\mathtt{t}\cdot\mathtt{cyc}(f)),
\end{align*}
where $d_\mathtt{t}$ is the denominator introduced from applying the Hecke operator $\mathtt{t}$ and dividing by $\omega_{\mathtt{t}}$. It is independent of $U$.

By the K\"unneth formula, 
\begin{align*}
  d_{\mathtt{t}}^{-1}\h^{2n-1}_\et(\Sh_{\tG}(K\times U)_{/\bar{E}},\mathbb{L}_\xi(n))&\simeq d_{\mathtt{t}}^{-1}\h^{2n-1}_\et(\Sh_G(K)_{/\bar{E}},\mathbb{L}_\xi(n))\otimes_\Or\h^0_\et(\Sh_1(U)_{/\bar{E}},\Or)\\
  &=d_{\mathtt{t}}^{-1}\h^{2n-1}_\et(\Sh_G(K)_{/\bar{E}},\mathbb{L}_\xi(n))\otimes_\Or \Or[\Sh_1(U)].
\end{align*}
Let $v\in\pi_f^K$. Then we can project the first term to $V_\pi$ using $\mathtt{pr}_v$. This defines a lattice $T_\pi\subseteq V_\pi$ which depends on both $v$ and $\mathtt{t}$, but not on $U$. All together, we have constructed a map
\[
  \mathrm{AJ}_{\mathtt{t},v}^{(U)}:C_c^\infty(\tX(\A_f)^+,\Z_p)^{K\times U}\to\h^1(E,T_\pi\otimes\Or[\Sh_1(U)]).
\]
If $\mathtt{cyc}(f)$ is already cohomologically trivial, then this agrees with applying $\mathtt{pr}_v$ to $\mathrm{cl}_1(\mathtt{cyc}(f))$. In this sense, the resulting cohomology class is a reasonable replacement for the conjectural canonical splitting of $\mathrm{cl}_0$.

Taking direct limits as $U$ varies over all level structures in $\U(\mathtt{V}_1)$ gives
\begin{equation}\label{eq:AJ}
  \mathrm{AJ}_{\mathtt{t},v}:C_c^\infty(\tX(\A_f)^+,\Z_p)^{K}\to\h^1(E,T_\pi\otimes\Or[\Sh_1]).
\end{equation}
This is equivariant under the action of Hecke operators coming from $\U(\mathtt{V}_1)$. Suppose $\sh{T}$ is a Hecke operator on $G(F_\ell)$ at a place $\ell\nmid p$ where $K^p$ is hyperspecial, and $\sh{T}v=\lambda v$. Then we find
\[
  \mathrm{AJ}_{\mathtt{t},v}(\sh{T}\cdot f)=\lambda\cdot \mathrm{AJ}_{\mathtt{t},v}(f)
\]
by chasing through the equivariance properties of $\mathtt{cyc}$ and $\mathtt{pr}$. This also holds when $\ell|p$, provided that $\sh{T}$ is supported on $G_p^-$, and we take into account the extra factor in the definition of the representation $\xi^\circ$.

\section{Wild norm relation}\label{sec:Wild}
\subsection{Split local notations}\label{ss:LocalNotations}
In this and the following sections, we will work locally at a place of $F$ which splits in $E$. To simplify notations, we will replace everything from before with local notations. These are in force in this section and the next.

Let $F$ be a local field with ring of integers $\Or$, uniformizer $\varpi$, and residue field $k_F$. Let $q=\# k_F$. Let $v:F^\times\to\Z$ be the additive valuation map, normalized so that $v(\varpi)=1$. Let $J_0=\Or^\times$. For each $t\geq 1$, let
\[
  J_t=\{x\in\Or^\times\,|\,x\equiv 1\pmod{\varpi^t}\}\subseteq F^\times.
\]

Let $m<n$ be two positive integers, define the embedding
\[
  \iota_{m,n}:\GL_m(F)\hookrightarrow\GL_n(F),\ g\mapsto\begin{pmatrix}
    g & \\ & \mathrm{id}_{n-m}
  \end{pmatrix}.
\]
If the dimensions 
$m$ and $n$ 
are clear, the subscripts will be dropped. Let $H=\GL_n(F)$ and $G=\GL_n(F)\times\GL_{n+1}(F)$. We have the diagonal embedding $\triangle:H\to G,\ h\mapsto(h,\iota_{n,n+1}(h))$, which will be implicitly used to view $H$ as a subgroup of $G$. Define the augmented group $\widetilde{G}=G\times F^\times$ with the embedding $G\hookrightarrow\widetilde{G},\ (g_n,g_{n+1})\mapsto(g_n,g_{n+1},\det g_n)$. Let $X=H\backslash G$ and $\tX=H\backslash\tG$. Then we have the induced closed embedding $X\hookrightarrow\tX$.

\subsection{Norm relation}\label{ss:WildRelation}
Let $B_\tG$ denote the upper triangular Borel subgroup of $\tG$, and let $K_G^\Iw\times\Or^\times$ denote the corresponding Iwahori subgroup of $\tG$. Let $\overline{B}_\tG$ denote the opposite Borel. The right action of $\overline{B}_\tG$ on $\tX$ has a unique open orbit. Let $\mathbf{x}\in \tX$ denote an element in the open orbit. Explicitly, we take it as the image of $\xi=(\mathrm{id}_n,\xi_{n+1},1)\in K_G^\Iw\times\Or^\times$, where
\[
  \xi_{n+1}=\begin{pmatrix}
    w_n & 1_{1\times n}\\
    0_{n\times 1} & 1
  \end{pmatrix}
\]
and $w_n$ is the matrix with 1s on the anti-diagonal and 0s everywhere else.\footnote{The algebraic representation $\xi$ will not make an appearance in this section.} 
The the stabilizer of $\mathbf{x}$ in $\tG$ is $\xi^{-1}H\xi$, which consists of elements of the form
\[
  \left(h,\begin{pmatrix}
    w_nhw_n & (w_nh-1)\cdot 1_{1\times n}\\
    0_{n\times 1} & 1
  \end{pmatrix},\det h\right)
\]
for $h\in H=\GL_n(F)$. Observe that this intersects trivially with $\overline{B}_\tG$, so the orbit map gives an open embedding
\begin{equation}\label{eq:OpenOrbit}
  \overline{B}_\tG\hookrightarrow\tX,\quad g\mapsto\mathbf{x}g,
\end{equation}
which we use to identify $\overline{B}_\tG$ with a subset of $\tX$. Incidentally, since both $\overline{B}_{\tG}$ and $\tX$ have the same dimension, this computation proves our earlier assertion that $\xi$ is a representative of an element in the open orbit.

Let $\tau=(\tau_n,\tau_{n+1})=\triangle(\diag(\varpi^n,\varpi^{n-1},\cdots\varpi))\in G$. For each $t\geq 0$, we can define the function
\[
  \delta^{(t)}=\indf[\mathbf{x}\tau^t(K_G^\Iw\times J_t)]\in C_c^\infty(\tX,\Z)^{K_G^\Iw\times J_t}.
\]
Define the Hecke operator
\[
  \sh{U}:C_c^\infty(\tX,\Z)^{K_G^\Iw}\to C_c^\infty(\tX,\Z)^{K_G^\Iw},\quad f\mapsto\Tr_{K_G^\Iw}^{K_G^\Iw\cap\tau^{-1}K_G^\Iw\tau}(\tau^{-1} f).
\]
Its action agrees with the operator $\mu_G(K_G^\Iw)^{-1}\indf[K_G^\Iw\tau^{-1}K_G^\Iw]$. In the notation of Subsection \ref{ss:IntCoeff}, with the choice $P=B_{\tG}$, we have $\tau\in \tG^+$ and $\sh{U}$ is supported on $\tG^-$.

The main result of this section is the following abstract wild norm relation.

\begin{prop}\label{prop:WildRelation}
  For $t\geq 0$,
  \[
    \Tr_{K_G^\Iw\times J_t}^{K_G^\Iw\times J_{t+1}}\delta^{(t+1)}=\sh{U}\cdot \delta^{(t)}.
  \]
\end{prop}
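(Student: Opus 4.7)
The plan is to show that both sides equal $\indf[\mathbf{x}\tau^{t+1}(K_G^\Iw\times J_t)]$ as $(K_G^\Iw\times J_t)$-invariant functions on $\tX$.

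For the LHS, the quotient $(K_G^\Iw\times J_t)/(K_G^\Iw\times J_{t+1})$ has representatives $\{(1,j) : j \in J_t/J_{t+1}\}$, since the third factor $\U(\mathtt{V}_1)(F)$ is central in $\tG$. Expanding the trace gives
\[
\text{LHS} = \sum_{j\in J_t/J_{t+1}}\indf\bigl[\mathbf{x}\tau^{t+1}(K_G^\Iw\times J_{t+1}j^{-1})\bigr].
\]
The supports partition $\mathbf{x}\tau^{t+1}(K_G^\Iw\times J_t)$ in $\tG$, and disjointness persists in $\tX$: a relation $\mathbf{x}\tau^{t+1}(1,j) = h\cdot\mathbf{x}\tau^{t+1}(1,j')$ with $h\in H$ forces $h\tau_n^{t+1} = \tau_n^{t+1}$ in the first coordinate, hence $h = 1$, hence $j/j' = \det h = 1$, a contradiction if $jJ_{t+1}\neq j'J_{t+1}$. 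The sum therefore collapses to the target indicator.

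For the RHS, I expand $\sh{U}\cdot\delta^{(t)} = \sum_u\indf[\mathbf{x}\tau^t(K_G^\Iw\times J_t)\tau u^{-1}]$ with $u$ ranging over representatives of $K_G^\Iw/(K_G^\Iw\cap\tau^{-1}K_G^\Iw\tau)$. Since $\tau\in A^+$, the Iwahori decomposition $K_G^\Iw = \overline{N}_KM_KN_K$ gives $K_G^\Iw\cap\tau^{-1}K_G^\Iw\tau = (\tau^{-1}\overline{N}_K\tau)M_KN_K$, so $u$ can be taken in $\overline{N}_K$ parameterizing $\overline{N}_K/(\tau^{-1}\overline{N}_K\tau)$. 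Via the double-coset decomposition $K_G^\Iw\tau K_G^\Iw = \bigsqcup_u K_G^\Iw\tau u^{-1}$, the union of supports in $\tG$ is $\mathbf{x}\tau^t(K_G^\Iw\tau K_G^\Iw)\times J_t\varpi^{n(n+1)/2}$. After rewriting $k_1\tau k_2 = \tau(\tau^{-1}k_1\tau)k_2$ and Iwahori-factoring $\tau^{-1}k_1\tau$, the non-integral piece lives in $\tau^{-1}N_K\tau\supseteq N_K$; modulo $H$ this must be absorbed to reduce the support to the single orbit $\mathbf{x}\tau^{t+1}(K_G^\Iw\times J_t)$ with multiplicity one.

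The main obstacle is this absorption-and-multiplicity-one descent from $\tG$ to $\tX$. Concretely, for each $n\in N_K$ one must produce $h\in H$ and $k\in K_G^\Iw$ with $\mathbf{x}\tau^t n\tau = h\mathbf{x}\tau^{t+1}k$, and verify that the assignment $u\mapsto$ its $(K_G^\Iw\times J_t)$-orbit is injective. Using the explicit description of the stabilizer $\xi^{-1}H\xi$ read off from~\eqref{eq:OpenOrbit}, conjugating by $\tau^{t+1}$ scales the unipotent component in exactly the fashion dual to $\tau^{-1}N_K\tau$, so the absorption should follow from a direct matrix computation in $\GL_n\times\GL_{n+1}$ balancing the valuations of the scaled Iwahori unipotents against those inside $\xi^{-1}H\xi$. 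The combinatorics of this matching is where the particular choice of $\mathbf{x}$ as an open-orbit representative gets used essentially, and I expect this bookkeeping (rather than any conceptual step) to be the main work of the proof.
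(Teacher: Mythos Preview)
Your target is correct: both sides equal $\indf[\mathbf{x}\tau^{t+1}(K_G^\Iw\times J_t)]$. But your justification of both equalities is incomplete, and on the LHS the gap is real, not just unwritten bookkeeping.

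\textbf{The LHS gap.} You claim the sets $\mathbf{x}\tau^{t+1}(K_G^\Iw\times J_{t+1}j^{-1})$ are disjoint in $\tX$ for distinct $j\in J_t/J_{t+1}$, arguing that a coincidence $\mathbf{x}\tau^{t+1}(1,j)=h\cdot\mathbf{x}\tau^{t+1}(1,j')$ forces $h=1$ from the first coordinate. But these are $(K_G^\Iw\times J_{t+1})$-orbits: to test overlap you must allow an arbitrary element $(k,a)\in K_G^\Iw\times J_{t+1}$ on one side, i.e.\ consider $h\xi\tau^{t+1}(1,j)=\xi\tau^{t+1}(k,a(j')^{-1})$ in $\tG$. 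The first coordinate then gives $h=\tau_n^{t+1}k_n\tau_n^{-(t+1)}$, which is not forced to be $1$, and the constraint $\det h\in J_{t+1}$ is no longer automatic. What you actually need is that any $(k,a)\in K_G^\Iw\times J_t$ stabilizing $\mathbf{x}\tau^{t+1}$ already lies in $K_G^\Iw\times J_{t+1}$, i.e.\ the stabilizer index $[\Stab_{K_G^\Iw\times J_t}(\mathbf{x}\tau^{t+1}):\Stab_{K_G^\Iw\times J_{t+1}}(\mathbf{x}\tau^{t+1})]$ is $1$. That is exactly the content you have not supplied.

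\textbf{The RHS.} You correctly identify this as the hard part and leave it open. The ``absorption-and-multiplicity-one'' statement is again a stabilizer index computation: one needs
\[
[\Stab_{\tau^{t}K_G^\Iw\tau^{-t}\times J_t}(\mathbf{x}):\Stab_{\tau^{t+1}K_G^\Iw\tau^{-(t+1)}\times J_t}(\mathbf{x})]=[\tau^{-1}K_G^\Iw\tau:K_G^\Iw\cap\tau^{-1}K_G^\Iw\tau].
\]

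\textbf{How the paper handles both.} The paper applies Lemma~\ref{lem:Trace} to reduce each side to $\indf[\mathbf{x}\tau^{t+1}(K_G^\Iw\times J_t)]$ times a stabilizer index, then computes those indices uniformly. The key observation is: write the Iwahori decomposition $\tau^tK_G^\Iw\tau^{-t}\times J_s=(\overline{B}_{\tG}\cap(\tau^tK_G^\Iw\tau^{-t}\times J_s))\cdot\tau^tN(\Or)\tau^{-t}$; for $g=\bar{b}n$ in this factorization with $\mathbf{x}g=\mathbf{x}$, the open-orbit embedding $\overline{B}_{\tG}\hookrightarrow\tX$ shows that $\bar{b}$ is \emph{uniquely determined} by $n$, and moreover $\bar{b}\equiv 1\pmod{\varpi^t}$ (in particular its $F^\times$-component lies in $J_t$). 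This simultaneously shows the LHS index is $1$ and gives a bijection $\tau^tN(\Or)\tau^{-t}\iso\Stab_{\tau^tK_G^\Iw\tau^{-t}\times J_s}(\mathbf{x})$ (for $s\le t$), from which the RHS ratio is immediate. This is not mere matrix bookkeeping; it is the conceptual use of the open $\overline{B}_{\tG}$-orbit that makes the stabilizer tractable, and it is the step your sketch is missing on both sides.
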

\begin{proof}
  Both sides can be computed using Lemma~\ref{lem:Trace}. For the left hand side,
  \begin{align*}
    \Tr_{K_G^\Iw\times J_t}^{K_G^\Iw\times J_{t+1}}\delta^{(t+1)}&=\indf[\mathbf{x}\tau^{t+1}(K_G^\Iw\times J_t)]\cdot[\Stab_{K_G^\Iw\times J_t}(\mathbf{x}\tau^{t+1}):\Stab_{K_G^\Iw\times J_{t+1}}(\mathbf{x}\tau^{t+1})]\\
    &=\indf[\mathbf{x}\tau^{t+1}(K_G^\Iw\times J_t)]\cdot[\Stab_{\tau^{t+1}K_G^\Iw\tau^{-(t+1)}\times J_{t}}(\mathbf{x}):\Stab_{\tau^{t+1}K_G^\Iw\tau^{-(t+1)}\times J_{t+1}}(\mathbf{x})].
  \end{align*}
  For the right hand side,
  \begin{align*}
    \sh{U}\cdot\delta^{(t)}&=\Tr_{K_G^\Iw\times J_t}^{(K_G^\Iw\cap\tau^{-1}K_G^\Iw\tau)\times J_t}\indf[\mathbf{x}\tau^t(K_G^\Iw\times J_t)\tau]\\
    &=\Tr_{K_G^\Iw\times J_t}^{(K_G^\Iw\cap\tau^{-1}K_G^\Iw\tau)\times J_t}\indf[\mathbf{x}\tau^{t+1}(\tau^{-1}K_G^\Iw\tau\times J_t)]\\
    &=\indf[\mathbf{x}\tau^{t+1}(K_G^\Iw\times J_t)]\cdot\frac{[\tau^{-1}K_G^\Iw\tau:K_G^\Iw\cap\tau^{-1}K_G^\Iw\tau]}{[\Stab_{\tau^{-1}K_G^\Iw\tau\times J_t}(\mathbf{x}\tau^{t+1}):\Stab_{K_G^\Iw\times J_t}(\mathbf{x}\tau^{t+1})]}\\
    &=\indf[\mathbf{x}\tau^{t+1}(K_G^\Iw\times J_t)]\cdot\frac{[\tau^{-1}K_G^\Iw\tau:K_G^\Iw\cap\tau^{-1}K_G^\Iw\tau]}{[\Stab_{\tau^{t}K_G^\Iw\tau^{-t}\times J_t}(\mathbf{x}):\Stab_{\tau^{t+1}K_G^\Iw\tau^{-(t+1)}\times J_t}(\mathbf{x})]}.
  \end{align*}
  We will now show that the two extraneous factors are both equal to 1. This is exactly \cite[Lemma 4.11]{LiuRS}. We include a more conceptual proof for completeness.
  
  Let $N$ (resp.\ $\overline{N}$) be the unipotent radical of $B_\tG$ (resp.\ $\overline{B}_\tG$), then for any $t\in\Z$ and $s\geq 0$, we have the Iwahori factorization
  \[
    \tau^t K_G^\Iw\tau^{-t}\times J_s=\big(\overline{B}_\tG\cap(\tau^t K_G^\Iw\tau^{-t}\times J_s)\big)\times \tau^t N(\Or)\tau^{-t}.
  \]
  Moreover, if $t\geq 0$ and $n\in\tau^t N(\Or)\tau^{-t}$, then $n\equiv 1\pmod{\varpi^t}$.
  
  Let $g\in\tau^t K_G^\Iw\tau^{-t}\times J_t$, and write $g=\bar{b}n$ with respect to this factorization. Suppose $\mathbf{x}g=\mathbf{x}$, then
  \[
    \mathbf{x}\bar{b}=\mathbf{x}n^{-1}\equiv\mathbf{x}\pmod{\varpi^t}.
  \]
  Since $\bar{b}\in\overline{B}_\tG$, we see that $\bar{b}$ is uniquely determined by $n^{-1}$. The final congruence implies that $\bar{b}\equiv 1\pmod{\varpi^t}$. In particular, the $F^\times$-factor of $\bar{b}$ lies in $J_t$. Therefore, the natural inclusion
  \[
    \Stab_{\tau^t K_G^\Iw\tau^{-t}\times J_t}(\mathbf{x})\subseteq\Stab_{\tau^t K_G^\Iw\tau^{-t}\times F^\times}(\mathbf{x})
  \]
  is actually an equality. It follows that the index term for the left hand side of the proposition is 1.
  
  When $t\geq 0$, the intersection $\overline{B}_\tG\cap(\tau^t K_G^\Iw\tau^{-t}\times J_s)$ contains $\overline{N}(\Or)$, so if $s\leq t$, then there is a bijection
  \[
    \tau^t N(\Or)\tau^{-t}\iso\Stab_{\tau^t K_G^\Iw\tau^{-t}\times J_s}(\mathbf{x})
  \]
  sending $n$ to $\bar{b}n$ for the unique $\bar{b}\in\overline{B}_\tG$ such that $\mathbf{x}\bar{b}=\mathbf{x}n^{-1}$. It follows that
  \begin{align*}
    [\Stab_{\tau^{t}K_G^\Iw\tau^{-t}\times J_t}(\mathbf{x}):\Stab_{\tau^{t+1}K_G^\Iw\tau^{-(t+1)}\times J_t}(\mathbf{x})]&=[\tau^t N(\Or)\tau^{-t}:\tau^{t+1}N(\Or)\tau^{-(t+1)}]\\
    &=[\tau^{-1}N(\Or)\tau:N(\Or)]\\
    &=[\tau^{-1}K_G^\Iw\tau:K_G^\Iw\cap\tau^{-1}K_G^\Iw\tau],
  \end{align*}
  where the final equality again follows from the Iwahori decomposition. This completes the proof.
\end{proof}

\begin{remark}
  This is essentially the same as the construction formulated in \cite{LoefflerSpherical}.
\end{remark}

\section{Tame norm relation}\label{sec:Tame}
This section will prove the tame norm relation. The proof is a technical local computation, and it is independent of the rest of the paper. One can skip to Section~\ref{ss:TameRelation}, which contains a statement of the main result in a form that immediately applies to the construction of Euler systems.

We will continue using the local notations introduced in Section~\ref{ss:LocalNotations}. Moreover, fix an additive character $\psi:F/\Or\to\C^\times$ such that $\psi(\varpi^{-1})\neq 1$. For psychological reasons, we will denote $\# k_F$ by $\ell$ instead of $q$.
\subsection{Whittaker models}
For each positive integer $n$, let $N_n(F)$ denote the subgroup of unipotent upper triangular matrices in $\GL_n(F)$, then $\psi$ extends to a character of $N_n(F)$ by
\[
  \psi(u)=\psi(u_{12}+\cdots+u_{n-1,n}).
\]
The space of Whittaker functions on $\GL_n(F)$ with character $\psi$ is defined by
\[
  \sh{W}_n(\psi)=\{W:\GL_n(F)\to\C\text{ smooth}\,|\,W(ug)=\psi(u)W(g)\text{ for all }u\in N_n(F),\ g\in\GL_n(F)\}.
\]
For $\underline{a}\in\Z^n$, define $\varpi^{\underline{a}}=\diag(\varpi^{a_1},\cdots,\varpi^{a_n})$. By the Iwasawa decomposition, $W\in\sh{W}_n(\psi)$ is uniquely determined by its values on matrices of the form $\varpi^{\underline{a}}k$ with $k\in\GL_n(\Or)$. The group $\GL_n(F)$ acts on $\sh{W}_n(\psi)$ by right translation, and we let $\sh{W}_n(\psi)^\circ\subseteq\sh{W}_n(\psi)$ be the subspace of (right) $\GL_n(\Or)$-invariant functions.

If $\pi$ is a smooth admissible representation of $\GL_n(F)$, then it is classical that
\[
  \dim_\C\Hom_{\GL_n(F)}(\pi,\sh{W}_n(\psi))\leq 1
\]
and the dimension is 1 for a dense set of irreducible unramified representations. In the dimension one case, let $\sh{W}_n(\pi,\psi)$ denote the image of any non-trivial map in the Hom-space. This is the Whittaker model of $\pi$.

\subsection{Local Birch lemma}\label{sec:Birch}
Let $\varphi$ denote the diagonal matrix $\varpi^{(-1,\cdots,-n)}$. Let
\[
  \sh{N}_n:=\varphi^{-1}N_n(\Or)\varphi/N_n(\Or).
\]
Given integers $j>i>0$, let $R_{ij}$ denote a set of representatives for $\varpi^{i-j}\Or/\Or$ in $F$, then
\[
  \mathscr{N}_n:=\{\eta\in N_n(F)\,|\,\eta_{ij}\in R_{ij} \text{ if }1\leq i<j\leq n\}
\]
is a set of coset representatives for $\sh{N}_n$. It has cardinality $\prod_{1\leq j\leq i\leq n}\ell^{j-i}=\ell^{\frac{1}{6}n(n+1)(n-1)}$. Let $s:F\to\{0,1\}$ be the indicator function of $\Or$. By an abuse of notation, also define $s:\sh{N}_n\to\N$ by $s(\eta)=\sum_{i=1}^{n-1} s(\eta_{i,i+1})$. This counts the number of integral off-diagonal elements of $\eta$.

\begin{lemma}\label{lem:Birch}
  Let $W_{n+1}\in\sh{W}_{n+1}(\psi)^\circ$, then the function 
  \[
    h\mapsto\widetilde{W_{n+1}}(h):=\sum_{\eta\in\sh{N}_{n+1}}(1-\ell)^{s(\eta)}W_{n+1}(\iota_{n,n+1}(h)\eta)
  \]
  is supported on $N_n(F)\GL_n(\Or)$. Moreover, if $k\in\GL_n(\Or)$, then
  \[
    \widetilde{W_{n+1}}(k)=\begin{cases}
      (-1)^n\ell^{\frac{1}{6}n(n+1)(n+2)}W_{n+1}(1) & \text{if }v(k_{ij})>i-j \text{ whenever }i>j\\
      0 & \text{otherwise}.
    \end{cases}
  \]
\end{lemma}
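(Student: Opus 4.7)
The plan is to prove the lemma by directly analyzing the Iwasawa decomposition of $\iota_{n,n+1}(h)\eta$, using the left Whittaker-equivariance and right $\GL_{n+1}(\Or)$-invariance of $W_{n+1}$ to reduce the sum to a product of one-variable character-sum identities.

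First I would use Iwasawa on $\GL_n(F)$ to write $h = n \cdot a \cdot k$ with $n \in N_n(F)$, $a$ diagonal, and $k \in \GL_n(\Or)$. The left $N_n$-factor passes through the Whittaker property of $W_{n+1}$, since $\iota_{n,n+1}(N_n)\subseteq N_{n+1}$ and the character restricts consistently (the added $(n,n+1)$-entry of $\iota(n)$ is zero). This produces only an overall factor of $\psi(n)$, so it suffices to analyze $\widetilde{W_{n+1}}(ak)$ for $a$ diagonal and $k \in \GL_n(\Or)$.

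The main case is $h = k \in \GL_n(\Or)$ satisfying the good hypothesis $v(k_{ij}) > i - j$ for all $i > j$. Here $\iota(k) \in \GL_{n+1}(\Or)$, so for each $\eta \in \mathscr{N}_{n+1}$ the Iwasawa decomposition of $\iota(k)\eta$ has trivial (i.e.\ $A(\Or)$-valued) diagonal part, and the key claim is that the super-diagonal entries of its $N_{n+1}$-part are congruent to $k_{jj}\eta_{j,j+1}$ modulo $\Or$ for $j = 1,\dots,n$. Writing $\eta = \bigl(\begin{smallmatrix}\alpha & \beta\\ 0 & 1\end{smallmatrix}\bigr)$, one verifies this by direct computation: the off-diagonal contributions $k_{jm}\alpha_{m,j+1}$ (resp.\ $k_{nm}\beta_m$) to position $(j, j+1)$ with $m \neq j, j+1$ have valuation $\geq (j-m+1)+(m-j-1) = 0$ by the good hypothesis together with $v(\alpha_{m,j+1}) \geq m-j-1$. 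Using right-invariance of $W_{n+1}$, this yields
\[
  W_{n+1}(\iota_{n,n+1}(k)\eta) = \psi\!\left(\textstyle\sum_{j=1}^n k_{jj}\eta_{j,j+1}\right) W_{n+1}(1).
\]
Since each $k_{jj} \in \Or^\times$, the substitution $\eta_{j,j+1}\mapsto k_{jj}^{-1}\eta_{j,j+1}$ bijects $R_{j,j+1}$ and preserves $s$, so the sum factorizes entry-by-entry. Each strictly super-diagonal $\eta_{ij}$ with $j \geq i+2$ contributes $|R_{ij}|=\ell^{j-i}$, and each super-diagonal $\eta_{j,j+1}$ contributes
\[
  \sum_{y \in R_{j,j+1}}(1-\ell)^{s(y)}\psi(k_{jj} y) = (1-\ell)\cdot 1 + 1\cdot(-1) = -\ell,
\]
using $\sum_{y\in\varpi^{-1}\Or/\Or}\psi(k_{jj}y) = 0$ (from $\psi(\varpi^{-1})\neq 1$ and $k_{jj}\in\Or^\times$). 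Multiplying these factors and checking the combinatorial identity $n + \sum_{1\leq i < j \leq n+1,\ j-i\geq 2}(j-i) = \tfrac{1}{6}n(n+1)(n+2)$ gives the claimed constant.

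For vanishing outside this locus, I would argue as follows. If $k$ fails the good hypothesis at $(i_0, m_0)$ with $v(k_{i_0,m_0}) \leq i_0 - m_0$ (and $i_0 > m_0$), then the strictly super-diagonal entry $\eta_{m_0, i_0+1}$ (which, being strictly super-diagonal in $\eta$, does not appear in the weight $(1-\ell)^{s(\eta)}$) contributes to the super-diagonal of $\iota(k)\eta$ only at position $(i_0, i_0+1)$ with coefficient $k_{i_0, m_0}$. Factoring out and summing over $\eta_{m_0, i_0+1} \in R_{m_0, i_0+1}$ reduces to the character sum $\sum_{y\in \varpi^{m_0-i_0-1}\Or/\Or}\psi(k_{i_0,m_0}y)$, which vanishes precisely when $v(k_{i_0,m_0}) \leq i_0 - m_0$. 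The case $a \notin A(\Or)$ is handled by the analogous argument applied to the super-diagonal scalings $a_j/a_{j+1}$ introduced by the conjugation of the $N_{n+1}$-part by $\iota(a)$. The main obstacle is the precise tracking of the Iwasawa decomposition of $\iota(k)\eta$ and the verification that the identified variable $\eta_{m_0, i_0+1}$ does not enter the Iwasawa $A$-part or any other super-diagonal coordinate under the chosen factorization order — essentially a careful bookkeeping exercise once the LU-valuation estimates above are in place.
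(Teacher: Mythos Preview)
Your argument has a genuine gap: you systematically conflate the $(j,j+1)$-entry of the \emph{matrix} $\iota(k)\eta$ with the $(j,j+1)$-entry of its Iwasawa $N$-part $n'$, and these are not the same. The ``direct computation'' you give, estimating $k_{jm}\alpha_{m,j+1}$, correctly shows $(\iota(k)\eta)_{j,j+1}\equiv k_{jj}\eta_{j,j+1}\pmod{\Or}$, but $W_{n+1}$ sees $\psi(n')$, not $\psi$ of the superdiagonal of $\iota(k)\eta$. In fact, even in the good case one computes (e.g.\ via $k=LU$ or by conjugating by $\varphi_{n+1}$) that $n'_{j,j+1}\equiv k_{jj}k_{j+1,j+1}^{-1}\eta_{j,j+1}\pmod{\Or}$, not $k_{jj}\eta_{j,j+1}$; already for $n=2$ one finds $n'_{12}\equiv k_{11}k_{22}^{-1}\eta_{12}$. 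The discrepancy is a unit, so your character sum would still evaluate correctly after substitution, but establishing the relationship between $(\iota(k)\eta)_{j,j+1}$ and $n'_{j,j+1}$ (and that the $A$-part lies in $A(\Or)$) is exactly the nontrivial step, and your proposal does not supply it.

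The gap is more serious in the vanishing cases. When $k$ fails the good hypothesis (or $a\notin A(\Or)$), the Iwasawa $A$-part of $\iota(ak)\eta$ can genuinely depend on $\eta$: for instance with $n=2$, $k=\bigl(\begin{smallmatrix}1&0\\\varpi&1\end{smallmatrix}\bigr)$ (which is bad at $(2,1)$) and $\eta_{12}=-\varpi^{-1}$, the matrix $\iota(k)\eta$ drops out of $N_3(F)\cdot\GL_3(\Or)$ entirely. So varying your chosen entry $\eta_{m_0,i_0+1}$ does not merely twist by a character of that variable, and the sum does not reduce to $\sum_y\psi(k_{i_0,m_0}y)$. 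The paper's proof avoids this by induction on $n$: it peels off the last column $v$ of $\eta$ and sums over it first, which \emph{does} enter cleanly via left $N_{n+1}$-equivariance, forcing $a_n=0$ and the bottom-row condition on $k$; a short Iwasawa manipulation then reduces to the $\GL_n$ statement for $k'$ and $\xi\in\mathscr{N}_n$.
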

\begin{proof}
  First observe that $\widetilde{W_{n+1}}$ is well-defined since $W_{n+1}$ is spherical, so we can write the sum over $\mathscr{N}_{n+1}$ instead. Given $\eta\in\mathscr{N}_{n+1}$, it can be partitioned into
  \[
    \eta=\begin{pmatrix}
      \xi & v\\
       & 1
    \end{pmatrix},\quad \xi\in\mathscr{N}_n,\ v\in C_n:=R_n\times\cdots\times R_1.
  \]
  By performing the sum over $v\in C_n$ first, we get
  \[
    \widetilde{W_{n+1}}(h)=\sum_{\xi\in\mathscr{N}_n}(1-\ell)^{s(\xi)}\sum_{v\in C_n}(1-\ell)^{s(v_n)}W_{n+1}(\iota_{n,n+1}(h)\eta).
  \]
  It is clear that $\widetilde{W_{n+1}}\in\sh{W}_n(\psi)$, so $\widetilde{W_{n+1}}$ is determined by its values on matrices of the form $\varpi^{\underline{a}}k$, where $\underline{a}\in\Z^n$ and $k\in\GL_n(\Or)$. The inner sum is then
  \[
    (\dagger):=\sum_{v\in C_n}(1-\ell)^{s(v_n)}W_{n+1}\left(\begin{pmatrix} \varpi^{\underline{a}}k & \\ & 1 \end{pmatrix}\cdot\begin{pmatrix} \xi & v\\ & 1 \end{pmatrix}\right).
  \]

  Observe that
  \[
    \begin{pmatrix}\varpi^{\underline{a}}k & \\ & 1\end{pmatrix}\begin{pmatrix}\xi & v\\ & 1\end{pmatrix}=\begin{pmatrix}\mathrm{id}_n & \varpi^{\underline{a}}kv\\ & 1\end{pmatrix}\begin{pmatrix}\varpi^{\underline{a}}k\xi & \\ & 1\end{pmatrix}.
  \]
  Since $W_{n+1}\in\sh{W}_{n+1}( \psi)$, we get that
  \[
    W_{n+1}\left(\begin{pmatrix} \varpi^{\underline{a}}k & \\ & 1 \end{pmatrix}\cdot\begin{pmatrix} \xi & v\\ & 1 \end{pmatrix}\right)=\psi(\varpi^{a_n}(kv)_n)W_{n+1}\left(\begin{pmatrix}\varpi^{\underline{a}}k\xi & \\ & 1\end{pmatrix}\right)=\psi(\varpi^{a_n}(kv)_n)W_n(\varpi^{\underline{a}}k\xi),
  \]
  where $W_n:=W_{n+1}\circ\iota_{n,n+1}$. The first term can be expanded as
  \[
    \psi(\varpi^{a_n}(kv)_n)=\prod_{i=1}^n\psi(\varpi^{a_n}k_{ni}v_i).
  \]
  Therefore, 
  \[
    (\dagger)=\left(\prod_{i=1}^n\sum_{v_i\in R_{n-i+1}}(\ast)\,\psi(\varpi^{a_n}k_{ni}v_i)\right)W_n(\varpi^{\underline{a}}k\xi),
  \]
  where $(\ast)=1$ if $i\neq n$, and $(\ast)=(1-\ell)^{s(v_n)}$ if $i=n$.
  
  For an arbitrary $x\in\Or$ and index $i$, replacing $v_i$ by $v_i+x$ multiplies this product by $\psi(\varpi^{a_n}x)$. On the other hand, we have observed that this does not change the value of $(\dagger)$. Therefore, $(\dagger)=0$ unless $a_n\geq 0$. Assuming this, we get $\varpi^{a_n}k_{ni}\in\Or$ for all $i$, so standard orthogonality relations gives
  \[
    \sum_{v_i\in R_{n-i+1}}\psi(\varpi^{a_n}k_{ni}v_i)=\begin{cases}
      \ell^{n-i+1} & \text{if }v(k_{ni})+a_n\geq n-i+1\\
      0 & \text{otherwise}.
    \end{cases}
  \]
  For $i=n$, the inclusion of the factor $(1-\ell)^{s(v_n)}$ gives
  \[
    \sum_{v_n\in R_1}(1-\ell)^{s(v_n)}\psi(\varpi^{a_n}k_{nn}v_n)=\begin{cases}
      -\ell & \text{if }v(k_{nn})+a_n=0\\
      0 & \text{otherwise}.
    \end{cases}
  \]
  The expression $(\dagger)$ is a product of the above terms. For it to be non-zero, we must have $v(k_{nn})=a_n=0$ and $v(k_{ni})\geq n-i+1$ for all $i<n$. Therefore,
  \begin{equation}\label{eq:Birch-Inner}
    (\dagger)=\begin{cases}
      -\ell^{\frac{1}{2}n(n+1)}W_n(\varpi^{\underline{a}}k\xi) & \text{if }a_n=0\text{ and for all }i<n,\ v(k_{ni})>n-i\\
      0 & \text{otherwise}.
    \end{cases}
  \end{equation}
  
  Assuming the above conditions are met, we are left with
  \[
    \widetilde{W_{n+1}}(\varpi^{\underline{a_n}}k)=-\ell^{\frac{1}{2}n(n+1)}\sum_{\xi\in\mathscr{N}_n}(1-\ell)^{s(\xi)}W_n(\varpi^{\underline{a}}k\xi).
  \]
  Decompose $k$ as follows:
  \[
    k=\begin{pmatrix}\mathrm{id}_n & x\\ & 1\end{pmatrix}\cdot\begin{pmatrix}k' & \\ & 1\end{pmatrix}\cdot\begin{pmatrix}\mathrm{id}_n & \\y & u\end{pmatrix}=\begin{pmatrix}
      k'+xy & ux\\ y & u
    \end{pmatrix}.
  \]
  By comparing entries, we see that $u=k_{nn}$ is a unit, and for all $i\leq n-1$, $y_i=k_{ni}\in\varpi^{n-i+1}\Or$ and $x_i=u^{-1}k_{in}\in\Or$. Moreover,
  \begin{equation}\label{eqn:Birch-Vk}
    v((k')_{ij}-k_{ij})=v(x_i)+v(y_j)\geq n-j+1.
  \end{equation}
  We also have $a_n=0$, so
  \[
    W_n(\varpi^{\underline{a}}k\xi)=\psi(\varpi^{a_{n-1}}k_{n-1,n})W_n\left(\iota_{n-1,n}(\varpi^{\underline{a}'}k')\begin{pmatrix}\mathrm{id}_n & \\y & u\end{pmatrix}\xi\right),
  \]
  where $\underline{a}'=(a_1,\cdots,a_{n-1})$.
  
  Let $b=\big(\begin{smallmatrix}\mathrm{id}_n&\\y&u\end{smallmatrix}\big)$ and $B=\varphi b\varphi^{-1}$. The valuation bounds on $y$ implies that $B\in\GL_n(\Or)$ and $B\equiv\mathrm{id}_n\pmod{\varpi}$. Write $\xi=\varphi^{-1}\Xi\varphi$, then $\Xi\in N_n(\Or)$. By performing column operations on the matrix $B\Xi$, we obtain a factorization
  \[
    B\Xi=\Xi'C,
  \]
  where $\Xi'\in N_n(\Or)$ and $C$ a lower triangular matrix in $\GL_n(\Or)$. Moreover, $C\equiv \mathrm{id}_n\pmod{\varpi}$. Conjugating by $\varphi$ gives
  \[
    b\xi=\xi'(\varphi^{-1}C\varphi),\quad \xi':=\varphi^{-1}\Xi'\varphi\in\varphi^{-1}N_n(\Or)\varphi.
  \]
  Since $C$ is lower triangular, we still have $\varphi^{-1}C\varphi\in\GL_n(\Or)$. Therefore, this is the Iwasawa decomposition for $b\xi$, and $\xi\mapsto\xi'$ is a permutation of $\sh{N}_n$. By comparing the $(i,i+1)$-th entry of both sides, we see that
  \[
    \xi_{i,i+1}=(\xi')_{i,i+1}C_{i+1,i+1}+\sum_{j>i+1}(\xi')_{ij}C_{j,i+1}\varpi^{j-(i+1)}.
  \]
  Since $C\equiv \mathrm{id}_n\pmod{\varphi}$, each term in the sum is integral, and $C_{i+1,i+1}$ is a unit. Therefore, $\xi_{i,i+1}\in\Or$ if and only if $(\xi')_{i,i+1}\in\Or$. It follows that $s(\xi)=s(\xi')$.
  
  Combining everything together, we see that when the non-vanishing condition in equation~\eqref{eq:Birch-Inner} is satisfied,
  \begin{align*}
    \widetilde{W_{n+1}}(\varpi^{\underline{a}}k)&=-\ell^{\frac{1}{2}n(n+1)}\sum_{\xi\in\mathscr{N}_n}(1-\ell)^{s(\xi)}W_n(\varpi^{\underline{a}}k\xi)\\
    &=-\ell^{\frac{1}{2}n(n+1)}\psi(\varpi^{a_{n-1}}k_{n-1,n})\sum_{\xi\in\mathscr{N}_n}(1-\ell)^{s(\xi)}W_n\left(\iota_{n-1,n}(\varpi^{\underline{a}'}k')\begin{pmatrix}\mathrm{id}_n & \\y & u\end{pmatrix}\xi\right)\\
    &=-\ell^{\frac{1}{2}n(n+1)}\psi(\varpi^{a_{n-1}}k_{n-1,n})\sum_{\xi'\in\sh{N}_n}(1-\ell)^{s(\xi')}W_n(\iota_{n-1,n}(\varpi^{\underline{a}'}k')\xi').
  \end{align*}
  By induction, we may assume this lemma has been proven for $n$. Therefore, the sum is 0 unless $\underline{a}'=0$ and $v((k')_{ij})>i-j$ for all $1\leq j<i\leq n-1$. In particular, $a_{n-1}=0$, so the character term is trivial. Moreover, equation~\eqref{eqn:Birch-Vk} implies that
  \[
    v((k')_{ij})>i-j\iff v(k_{ij})>i-j.
  \]
  Combined with equation~\eqref{eq:Birch-Inner}, we obtain the lemma.
\end{proof}

\subsection{Hecke algebra and cyclicity}
Let $K_G=\GL_n(\Or)\times\GL_{n+1}(\Or)$ and $K_H=K_G\cap H=\GL_n(\Or)$. They are hyperspecial maximal compact subgroups of $G$ and $H$ respectively. In the rest of this section, fix the Haar measure on $G$ (resp.~$H$) so that $K_G$ (resp.~$K_H$) has volume 1.

The spherical Hecke algebra $\sh{H}_G$ is the set $C_c^\infty(K_G\backslash G/K_G,\C)$ of compactly supported $K_G$-bi-invariant functions on $G$ with multiplication given by convolution
\[
  (f_1\ast f_2)(g):=\int_G f_1(x)f_2(x^{-1}g)dx,
\]
For the augmented group $\widetilde{G}=G\times F^\times$, take the maximal compact subgroup $K_G\times\Or^\times$. Defining $\sh{H}_{\tG}$ analogously, then we have a relation
\[
  \sh{H}_{\widetilde{G}}=\sh{H}_G\otimes_\C C_c^\infty(F^\times/\Or^\times,\C)=\sh{H}_G[\mathtt{T}^{\pm 1}],
\]
where $\mathtt{T}$ is the indicator function of $\varpi\Or^\times$ in $F^\times/\Or^\times$. On both $\sh{H}_G$ and $\sh{H}_{\widetilde{G}}$, we can define the involution $f\mapsto f^\vee$ by $f^\vee(g):=f(g^{-1})$.

Given a smooth unramified representation $\widetilde{\pi}$ of $\widetilde{G}$, the space $\widetilde{\pi}^{K_{\widetilde{G}}}$ has the usual left $\sh{H}_{\widetilde{G}}$-action. The trace of $f\in\sh{H}_{\widetilde{G}}$ will be denoted by $\Tr\widetilde{\pi}(f)$. Of course, if $\widetilde{\pi}$ is irreducible, then $\widetilde{\pi}^{K_{\widetilde{G}}}$ is 1-dimensional, and $\widetilde{\pi}(f)v=\Tr\widetilde{\pi}(f)v$ for any $v\in\widetilde{\pi}$.

The space $C_c^\infty(\widetilde{X},\C)^{K_{\widetilde{G}}}$ has a natural left $\sh{H}_{\widetilde{G}}$-action, inherited from the right $\widetilde{G}$-action on $\widetilde{X}$. Let
\[
  \delta_0=\mathbf{1}[HK_{\widetilde{G}}]\in C_c^\infty\big(\widetilde{X},\C\big)^{K_{\widetilde{G}}}
\]
be the basic element. With our choice of the Haar measure, $I_H(\mathbf{1}[K_{\widetilde{G}}])=\delta_0$. The following proposition follows easily from some classical results.
\begin{prop}\label{prop:Cyc}
  The $\sh{H}_{\widetilde{G}}$-module $C_c^\infty(\widetilde{X},\C)^{\widetilde{G}}$ is generated by $\delta_0$.
\end{prop}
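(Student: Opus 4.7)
The plan is to deduce cyclicity from the uniqueness and non-vanishing of the local Rankin--Selberg model on unramified data. First I would use a Bernstein-style reduction: the Cartan--Iwasawa decomposition of $\tX$ implies that $C_c^\infty(\tX,\C)^{K_\tG}$ is finitely generated as an $\sh{H}_\tG$-module, so it suffices to check that for every character $\chi$ of $\sh{H}_\tG$ and every nonzero $\sh{H}_\tG$-equivariant map $\Phi_\chi:C_c^\infty(\tX,\C)^{K_\tG}\to\C_\chi$, the image $\Phi_\chi(\delta_0)$ is nonzero. Indeed, if $\delta_0$ did not generate, the quotient $C_c^\infty(\tX,\C)^{K_\tG}/\sh{H}_\tG\cdot\delta_0$ would produce such a pair $(\chi,\Phi_\chi)$ with $\Phi_\chi(\delta_0)=0$.

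Next, each such $\chi$ is the Satake parameter of an irreducible unramified representation $\tilde\pi_\chi=\pi_n\otimes\pi_{n+1}\otimes\omega$ of $\tG$. The identification $\C_\chi\cong\tilde\pi_\chi^{K_\tG}$ together with Frobenius reciprocity for the $\tG$-representation $C_c^\infty(\tX,\C)$ exhibits $\Phi_\chi$ as coming from an $H$-invariant pairing on $\tilde\pi_\chi$, i.e.\ an element of $\Hom_H(\tilde\pi_\chi,\C)$. By the classical uniqueness of the Rankin--Selberg/Gan--Gross--Prasad functional for the diagonal $\GL_n\subseteq\GL_n\times\GL_{n+1}$ (Jacquet--Piatetski--Shapiro--Shalika), this space is at most one-dimensional, and when nonzero it is realized by the Rankin--Selberg period on Whittaker models of $\tilde\pi_\chi$.

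Finally, one evaluates $\Phi_\chi(\delta_0)$ explicitly. Unfolding the definition of $\delta_0=\indf[HK_\tG]$ and passing to the Whittaker realization identifies $\Phi_\chi(\delta_0)$, up to a nonzero Casselman--Shalika constant, with the unramified Rankin--Selberg zeta integral on the spherical Whittaker vectors $W_n^\circ, W_{n+1}^\circ$ (twisted by the $F^\times$-character encoded in $\chi$); the classical unramified calculation then shows this equals the local $L$-factor $L(1/2,\pi_n\times\pi_{n+1})$ up to units, a nonzero rational function of the Satake parameters. Hence $\Phi_\chi(\delta_0)\neq 0$, establishing cyclicity. The main obstacle will be making the Bernstein-style reduction clean in the presence of the non-compact subgroup $H$; the subsequent Whittaker-side computation is by now a standard unramified input, and much of the combinatorial machinery needed is already assembled in Lemma~\ref{lem:Birch}.
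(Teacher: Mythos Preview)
The paper's proof is a two-line citation: it invokes Sakellaridis's general structure theorem for spherical functions on spherical varieties (Corollary~8.0.4(a) of the cited reference), noting that in the Rankin--Selberg case the relevant restriction map is the identity, and alternatively points to explicit double-coset computations for $\GL_n$. In effect, those references show directly that $C_c^\infty(\tX,\C)^{K_{\widetilde G}}$ is free of rank one over $\sh{H}_{\tG}$ with $\delta_0$ as basis, without any pointwise representation-theoretic analysis.

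Your zeta-integral approach is different and, as written, has a genuine gap at the pointwise step. Two issues. First, the passage from an $\sh{H}_{\tG}$-equivariant functional $\Phi_\chi$ to an element of $\Hom_H(\widetilde\pi_\chi,\C)$ is not justified: taking $K_{\widetilde G}$-invariants is not an equivalence between smooth $\tG$-representations and $\sh{H}_{\tG}$-modules, so there is no a priori reason every such $\Phi_\chi$ is the shadow of a $\tG$-map to the irreducible spherical subquotient $\widetilde\pi_\chi$ (at minimum you must work with the full unramified principal series and then control what happens when it is reducible). Second, and more seriously, the unramified Rankin--Selberg integral on spherical Whittaker vectors equals $L(\tfrac{1}{2},\pi_n\times\pi_{n+1})$, and this $L$-factor has poles at certain Satake parameters; at those $\chi$ the integral diverges, $\widetilde\pi_\chi$ need not be generic, and your argument yields no information. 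Since your reduction requires non-vanishing at \emph{every} maximal ideal, not merely generically, these bad points cannot be ignored: a ``nonzero rational function'' argument only shows that $C_c^\infty(\tX,\C)^{K_{\widetilde G}}/\sh{H}_{\tG}\cdot\delta_0$ is torsion, not that it vanishes. Closing this gap essentially forces you into the relative Satake machinery that the paper is already citing.
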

\begin{proof}
  This follows from \cite[Corollary 8.0.4(a)]{SakSpherical}, since the restriction map (8.2) at \emph{loc.~cit.}~is actually the identity in our setting. It is also an easy consequence of the calculations of \cite[Section 3]{MS_GLn}.
\end{proof}

Finally, we need a special Hecke operator whose trace gives the inverse of a local $L$-factor.
\begin{prop}\label{prop:HeckeL}
There exists a unique $\sh{L}\in\sh{H}_{\widetilde{G}}$ such that
  \[
    \Tr\widetilde{\pi}(\sh{L})=L\Big(\frac{1}{2},\widetilde{\pi}\Big)^{-1}
  \]
  for all generic irreducible unramified representations $\widetilde{\pi}$ of $\widetilde{G}$
\end{prop}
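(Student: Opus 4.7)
The plan is to invoke the Satake isomorphism and reduce the statement to a symmetric-polynomial identity. For the unramified group $\widetilde{G}=\GL_n(F)\times\GL_{n+1}(F)\times F^\times$ with hyperspecial maximal compact $K_G\times\Or^\times$, the Satake transform identifies
\[
\sh{H}_{\widetilde{G}} \iso \C[\alpha_1^{\pm 1},\ldots,\alpha_n^{\pm 1}]^{S_n}\otimes_\C\C[\beta_1^{\pm 1},\ldots,\beta_{n+1}^{\pm 1}]^{S_{n+1}}\otimes_\C\C[\gamma^{\pm 1}],
\]
where $(\alpha_i),(\beta_j),\gamma$ are dual-torus coordinates for the three factors. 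Under this identification, the pairing $f\mapsto\Tr\widetilde{\pi}(f)$ on a generic unramified irreducible $\widetilde{\pi}$ becomes evaluation at the Satake parameters of $\widetilde{\pi}$.

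First I would write $L(1/2,\widetilde{\pi})^{-1}$ explicitly in these coordinates. The Rankin--Selberg factor is
\[
L(s,\pi_n\times\pi_{n+1})^{-1}=\prod_{i=1}^{n}\prod_{j=1}^{n+1}(1-\alpha_i\beta_j q^{-s}),
\]
and any contribution of the augmentation component is an explicit Laurent polynomial in $\gamma$. Setting $s=1/2$ yields a Laurent polynomial in $\alpha_i,\beta_j,\gamma$ which is manifestly invariant under $S_n\times S_{n+1}$. By the displayed Satake isomorphism, this polynomial is the image of a unique $\sh{L}\in\sh{H}_{\widetilde{G}}$, and by construction $\sh{L}$ satisfies the required trace formula on every generic unramified irreducible $\widetilde{\pi}$.

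For uniqueness at the level of trace values, I would note that the Satake parameters of generic unramified irreducible representations form the complement of a proper Zariski-closed subset of the dual torus modulo the Weyl group (namely, the reducibility locus of the unramified principal series), hence a Zariski-dense set. Evaluation on such a set separates Weyl-invariant polynomials, which forces $\sh{L}$ to be the unique element satisfying the trace identity. I do not anticipate a serious obstacle; the only bookkeeping point is fixing the precise convention for $L(s,\widetilde{\pi})$ so that it agrees with the global $L(V_\Pi,s)=L(s+1/2,\Pi)$ from the introduction, which determines exactly what factor the augmentation contributes.
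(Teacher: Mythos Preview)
Your approach is essentially identical to the paper's: invoke the Satake isomorphism, exhibit the inverse $L$-factor as a Weyl-invariant Laurent polynomial, and appeal to Zariski density of generic Satake parameters for uniqueness. The only point the paper makes explicit that you leave as bookkeeping is the role of the augmentation: the correct polynomial is $\prod_{i=1}^n\prod_{j=1}^{n+1}(1-\alpha_i\beta_j\gamma\,\ell^{-1/2})$, i.e.\ the character $\chi$ on the $F^\times$-factor enters as a twist of the Rankin--Selberg factor, so each term carries a $\gamma$.
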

\begin{proof}
  We have the Satake isomorphism
  \[
    \sh{H}_\tG\simeq\C[A_1^{\pm 1},\cdots,A_n^{\pm 1}]^{S_n}\otimes_\C[B_1^{\pm 1},\cdots,B_{n+1}^{\pm 1}]^{S_{n+1}}\otimes_\C \C[T^{\pm 1}],
  \]
  where the symmetric groups $S_n$ and $S_{n+1}$ act by permuting their respective variables, and $\mathtt{T}$ is sent to $T$. Let $\sh{L}\in\sh{H}_\tG$ be the Hecke operator whose image is the polynomial
  \[
    \prod_{i=1}^n\prod_{j=1}^{n+1}\big(1-A_i B_j T\ell^{-\frac{1}{2}}\big).
  \]
  Comparing this with the definition of the local $L$-factor, we see that $\sh{L}$ satisfies the condition. Uniqueness follows since the set of Satake parameters for generic unramified representations is dense.
\end{proof}

\subsection{Zeta integral computation}
Let $K_H^\Iw$ be the Iwahori subgroup of $K_H$ of matrices which reduce to an upper triangular matrix mod $\varpi$. Recall from the previous subsection that $\varphi=\varpi^{(-1,\cdots,-n)}$. Define
\[
  K_H^\varphi=K_H^\Iw\cap\varphi K_H^\Iw\varphi^{-1}.
\]
This is the subgroup of $K_H$ defined by the valuation condition $v(k_{ij})>i-j$ whenever $i>j$. Let $\sh{N}_n$ and $s:\sh{N}_n\to\N$ be as in the Section~\ref{sec:Birch}. Define
\begin{equation}\label{eq:delta}
  \delta'=\sum_{\eta\in\sh{N}_n}\mu_H(K_H^\varphi)^{-1}(1-\ell)^{s(\eta)}\mathbf{1}[(1,\eta)K_G\times\Or^\times]\in C_c^\infty(\widetilde{G},\C)^{K_{G}\times\Or^\times}.
\end{equation}
By the general cyclicity result (Proposition~\ref{prop:Cyc}), we know that $I_H(\delta')=\sh{P}\cdot\delta_0$ for some $\sh{P}\in\sh{H}_{\widetilde{G}}$. We will now determine $\sh{P}$ using a zeta integral.
\begin{prop}\label{prop:TameZeta}
  Let $\sh{L}\in\sh{H}_{\widetilde{G}}$ be the element defined in Proposition~\ref{prop:HeckeL}, then
  \[
    (-1)^n\ell^{-\frac{1}{6}n(n+1)(n+2)}I_H(\delta')=\sh{L}^\vee\cdot\delta_0.
  \]
\end{prop}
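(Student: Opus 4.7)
The plan follows the standard zeta-integral strategy. By Proposition~\ref{prop:Cyc}, there is a unique $\sh{P}\in\sh{H}_{\tG}$ with $I_H(\delta')=\sh{P}\cdot\delta_0$, and the proposition is equivalent to the assertion $\sh{P}=(-1)^n\ell^{\frac{1}{6}n(n+1)(n+2)}\sh{L}^\vee$. Since the Satake parameters of generic irreducible unramified representations are Zariski dense in $\spec\sh{H}_{\tG}$, it suffices to verify the corresponding equality of traces on every such $\tilde\pi=\tau_n\boxtimes\tau_{n+1}\boxtimes\chi$.

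Fix such a $\tilde\pi$ with normalized spherical vector $v_0$, and let $\lambda_{\tilde\pi}\in\Hom_H(\tilde\pi,\C)$ be the canonical $H$-invariant functional given in the Whittaker model by the Rankin--Selberg integral
\[
  \lambda_{\tilde\pi}(W_n\otimes W_{n+1}\otimes c)=c\int_{N_n(F)\backslash\GL_n(F)}W_n(h)\,W_{n+1}(\iota_{n,n+1}(h))\,\chi(\det h)\,dh,
\]
so that $\lambda_{\tilde\pi}(v_0)=L(\tfrac12,\tilde\pi)$ by Jacquet--Piatetski-Shapiro--Shalika. Define
\[
  \Lambda_{\tilde\pi}:C_c^\infty(\tX,\C)^{K_{\tG}}\to\C,\quad \Lambda_{\tilde\pi}(\phi)=\int_{\tX}\phi(x)\,\lambda_{\tilde\pi}(\tilde\pi(\tilde x)v_0)\,d\mu_{\tX}(x),
\]
which is well-defined by $H$-invariance of $\lambda_{\tilde\pi}$. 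A direct change of variables gives $\Lambda_{\tilde\pi}(\sh{F}\cdot\phi)=\Tr\tilde\pi(\sh{F}^\vee)\,\Lambda_{\tilde\pi}(\phi)$ for all $\sh{F}\in\sh{H}_{\tG}$, and applying this to $I_H(\delta')=\sh{P}\cdot\delta_0$ reduces the problem to evaluating $\Lambda_{\tilde\pi}$ on $\delta_0$ and $I_H(\delta')$.

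The first value is immediate from $I_H(\mathbf{1}[K_{\tG}])=\delta_0$: $\Lambda_{\tilde\pi}(\delta_0)=\lambda_{\tilde\pi}(v_0)=L(\tfrac12,\tilde\pi)$. For the second, lifting through $I_H$ and substituting the explicit form of $\delta'$, and using the $K_G\times\Or^\times$-invariance of $v_0$ together with the unramifiedness of $\chi$, the computation reduces to
\[
  \Lambda_{\tilde\pi}(I_H(\delta'))=\mu_H(K_H^\varphi)^{-1}\int_{N_n(F)\backslash\GL_n(F)}W_n^\circ(h)\,\widetilde{W_{n+1}^\circ}(h)\,\chi(\det h)\,dh,
\]
where $\widetilde{W_{n+1}^\circ}$ is the function of Lemma~\ref{lem:Birch}. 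By that lemma, $\widetilde{W_{n+1}^\circ}$ is supported on $N_n(F)\cdot K_H^\varphi$ with value $(-1)^n\ell^{\frac{1}{6}n(n+1)(n+2)}$ there; the support condition forces the $A_n$-part in the Iwasawa decomposition to be trivial, and since both $W_n^\circ$ and $\chi\circ\det$ are trivial on $K_H$, the integral collapses to $(-1)^n\ell^{\frac{1}{6}n(n+1)(n+2)}\mu_H(K_H^\varphi)$. The volume factor cancels the $\mu_H(K_H^\varphi)^{-1}$, giving $\Lambda_{\tilde\pi}(I_H(\delta'))=(-1)^n\ell^{\frac{1}{6}n(n+1)(n+2)}$. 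Combining with Proposition~\ref{prop:HeckeL} yields $\Tr\tilde\pi(\sh{P}^\vee)=(-1)^n\ell^{\frac{1}{6}n(n+1)(n+2)}\Tr\tilde\pi(\sh{L})$, and density of Satake parameters then gives $\sh{P}=(-1)^n\ell^{\frac{1}{6}n(n+1)(n+2)}\sh{L}^\vee$, as required. The main non-routine step is this collapsing of the Rankin--Selberg integral: the factors $(1-\ell)^{s(\eta)}$ in $\delta'$ are designed precisely so that Lemma~\ref{lem:Birch} makes $\widetilde{W_{n+1}^\circ}$ into (essentially) the characteristic function of $N_n(F)K_H^\varphi$, turning what is a priori a positive-dimensional integral into a single constant. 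Everything else is straightforward bookkeeping of Haar measure normalizations and Hecke equivariance.
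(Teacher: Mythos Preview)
Your proof is correct and follows essentially the same route as the paper's: both use the Rankin--Selberg zeta integral as an $H$-invariant test functional, establish the Hecke equivariance $\Lambda_{\tilde\pi}(\sh{F}\cdot\phi)=\Tr\tilde\pi(\sh{F}^\vee)\Lambda_{\tilde\pi}(\phi)$, evaluate on $\delta_0$ via Jacquet--Shalika, and collapse the twisted integral to a constant using Lemma~\ref{lem:Birch}. The only cosmetic differences are that you define $\Lambda_{\tilde\pi}$ directly on $C_c^\infty(\tX)$ via the quotient measure (the paper defines it on $C_c^\infty(\tG)$ and then descends), and you make the density-of-Satake-parameters step explicit at the end. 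One small remark: Proposition~\ref{prop:Cyc} gives existence of $\sh{P}$ but not uniqueness, so your opening sentence slightly overstates what it provides; however your density argument establishes uniqueness anyway, so the logic is sound.
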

\begin{proof}
  Let $\widetilde{\pi}$ be an irreducible generic unramified representation of $\widetilde{G}=\GL_n(F)\times\GL_{n+1}(F)\times F^\times$. With respect to this decomposition, it factors as $\widetilde{\pi}=\pi_n\otimes\pi_{n+1}\otimes\chi$. Fix a spherical vector $v^\circ\in\widetilde{\pi}^{K_{\widetilde{G}}}$. Given a map $\mathfrak{z}\in\Hom_H(\widetilde{\pi},\C)$, form the relative matrix coefficient
  \[
    \sh{S}:C_c^\infty({\widetilde{G}},\C)^{K_{\widetilde{G}}}\to\C,\ \phi\mapsto\mathfrak{z}(\widetilde{\pi}(\phi)v^\circ).
  \]
  Let $f\in\sh{H}_{\widetilde{G}}$ and $v\in\widetilde{\pi}^{K_{\widetilde{G}}}$, then
  \begin{align*}
    \widetilde{\pi}(f\cdot\phi)v&=\int_{\widetilde{G}} \int_{\widetilde{G}} \phi(xg)f(g)\widetilde{\pi}(x)v\,dgdx\\
    &=\int_{\widetilde{G}} \int_{\widetilde{G}} \phi(y)f(g)\widetilde{\pi}(yg^{-1})v\,dgdy\\
    &=\int_{\widetilde{G}}\phi(y)\pi(y)\int_{\widetilde{G}} f(g)\widetilde{\pi}(g^{-1})vdgdy\\
    &=\big(\Tr \pi(f^\vee)\big)\widetilde{\pi}(\phi)v.
  \end{align*}
  In particular, we get $\sh{S}(f\cdot\phi)=\Tr \widetilde{\pi}(f^\vee)\sh{S}(\phi)$.
  
  The map $\sh{S}$ is invariant under left translation by $H$, so it defines a quotient map
  \[
    \overline{\sh{S}}:C_c^\infty(\tX,\C)^{K_{\widetilde{G}}}\to\C,\quad\overline{\sh{S}}(I_H(\phi))=\sh{S}(\phi).
  \]
  Applying this to the relation $I_H(\delta')=\sh{P}\cdot\delta_0$ gives
  \[
    \mf{z}(\widetilde{\pi}(\delta')v^\circ)=\Tr\widetilde{\pi}(\sh{P}^\vee)\mf{z}(v^\circ).
  \]
  To prove this proposition, it remains to compute both sides for an explicit choice of $\mf{z}$ and $v^\circ$.
  
  Let $\sh{W}_{\widetilde{\pi}}:=\sh{W}_n(\pi_n,\psi^{-1})\otimes\sh{W}_{n+1}(\pi_{n+1},\psi)\otimes\chi$ denote the Whittaker model for $\widetilde{\pi}$. Note the difference in additive characters between the two terms. Define $\mf{z}\in\Hom_H(\widetilde{\pi},\C)$ by the following zeta integral
  \[
    \mf{z}\big(W_n\otimes W_{n+1}\big)=\int_{N_n\backslash H} W_n(h)W_{n+1}(\iota(h))\chi(\det h)dh.
  \]
  Let $W^\circ=W_n^\circ\otimes W_{n+1}^\circ$ be the spherical Whittaker function in $\sh{W}_\pi$, normalized so that $W_n^\circ(1)=W_{n+1}^\circ(1)=1$. The classical calculation of Jacquet--Shalika gives
  \[
    \mf{z}(W^\circ)=L\Big(\frac{1}{2},\widetilde{\pi}\Big).
  \]
  On the other hand, by Lemma~\ref{lem:Birch},
  \begin{align*}
    \mf{z}\big(\pi({\delta}')W^\circ\big)&=\sum_{\eta\in\sh{N}_n}(1-\ell)^{s(\eta)}\mu_H(K_H^\varphi)^{-1}\mf{z}((1,\eta)W^\circ)\\
    &=\int_{N_n\backslash H}\mu_H(K_H^\varphi)^{-1}W_n(h)\sum_{\eta\in\sh{N}_n}(1-\ell)^{s(\eta)}W_{n+1}(\iota(h)\eta)\chi(\det h)dh\\
    &=(-1)^n\ell^{\frac{1}{6}n(n+1)(n+2)}.
  \end{align*}
  The proposition follows.
\end{proof}

\subsection{Integrality}
The following groups were defined in Section~\ref{ss:LocalNotations}:
\[
  J_0=\Or^\times,\quad J_1=\{x\in\Or^\times\,|\,x\equiv 1\pmod\varpi\}.
\]
We have constructed an element $I_H(\delta')\in C_c^\infty(\widetilde{X},\C)^{K_G\times J_0}$. For Euler system applications, we need to know that it lands in the image of
\[
  C_c^\infty\big(\widetilde{X},\Z[\ell^{-1}]\big)^{K_G\times J_1}
\]
under the trace map $\Tr_{K_G\times J_0}^{K_G\times J_1}$. This will be done by another explicit computation.

Let $\mathscr{N}_n'\subseteq\mathscr{N}_n$ denote the set of $\eta$ such that $\varpi\eta_{i,i+1}\equiv 0\text{ or 1}\pmod\varpi$ for all $i$. Define
\begin{equation}\label{eqn:tame_twist}
  {\delta}_1'=\sum_{\eta\in\mathscr{N}'_n}\mu_H(K_H^\varphi)^{-1}(-1)^{s(\eta)}(\ell-1)^n\mathbf{1}[(1,\eta)K_G\times J_1]\in C_c^\infty\big(\widetilde{G},\C\big)^{K_G\times J_1}.
\end{equation}
\begin{prop}\label{prop:TameIntegral}
  $I_H(\delta'_1)\in C_c^\infty(\widetilde{X},\Z[\ell^{-1}])^{K_G\times J_1}$ and $\Tr_{K_G\times J_0}^{K_G\times J_1} I_H(\delta_1')=I_H(\delta')$.
\end{prop}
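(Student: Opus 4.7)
The plan is to reduce both claims to an orbit analysis on $\{(1,\eta):\eta\in\mathscr{N}_n\}$. I would first compute the trace at the level of $C_c^\infty(\tG)$: since $J_1$ is normal in $J_0$ with quotient of order $\ell-1$, the sum defining $\Tr_{K_G\times J_0}^{K_G\times J_1}$ simply fills out the $J_1$-factor, so $\Tr_{K_G\times J_0}^{K_G\times J_1}\mathbf{1}[(1,\eta)(K_G\times J_1)]=\mathbf{1}[(1,\eta)(K_G\times J_0)]$. Hence
\[
\Tr_{K_G\times J_0}^{K_G\times J_1}\delta_1'=\sum_{\eta\in\mathscr{N}'_n}\mu_H(K_H^\varphi)^{-1}(-1)^{s(\eta)}(\ell-1)^n\mathbf{1}[(1,\eta)(K_G\times J_0)],
\]
and the trace identity reduces to showing $I_H(\Tr_{K_G\times J_0}^{K_G\times J_1}\delta_1')=I_H(\delta')$ in $C_c^\infty(\tX,\C)^{K_G\times J_0}$.

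The heart of the argument is to partition $\mathscr{N}_n$ and $\mathscr{N}'_n$ by the $H\times(K_G\times J_s)$-orbit of $(1,\eta)$ in $\tG$. For this I would introduce the super-diagonal support $T(\eta):=\{i:\varpi\eta_{i,i+1}\not\equiv 0\pmod\varpi\}$ and establish two facts: (a) the $\mu_H$-volume of $\Stab_\eta^{(s)}:=H\cap(1,\eta)(K_G\times J_s)(1,\eta)^{-1}$ depends only on $T(\eta)$ and $s$; (b) within each $H\times K_G$-orbit, the number of elements of $\mathscr{N}_n$ is exactly $(\ell-1)^{|T|}$ times the number of elements of $\mathscr{N}'_n$. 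Both facts follow from the action of the diagonal torus $T(\Or)\subseteq K_H$: a direct matrix computation shows that $\iota(\diag(h_1,\ldots,h_n))\eta$ lies in $\eta'\cdot K_{G_{n+1}}$ for the element $\eta'$ obtained by scaling the $(i,i+1)$-entry by $\bar h_i\bar h_{i+1}^{-1}$ (setting $\bar h_{n+1}:=1$), so each nonzero $\bar\eta_{i,i+1}$ may be independently rescaled to any element of $k_F^\times$ while zero entries remain zero.

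With this orbit structure in hand, the trace identity follows orbit-by-orbit. At an orbit $O$ of type $T$, both $I_H(\delta')$ and $I_H(\Tr\delta_1')$ contribute $\mu_H(K_H^\varphi)^{-1}\cdot V_T^{(0)}\cdot N_T(O)\cdot\mathbf{1}[HO]$ times a combinatorial factor, where $N_T(O)$ counts elements of $\mathscr{N}'_n$ in $O$. The combinatorial factors are $(\ell-1)^{|T|}(1-\ell)^{n-|T|}$ on the $\delta'$-side and $(-1)^{n-|T|}(\ell-1)^n$ on the $\Tr\delta_1'$-side, and these agree since $1-\ell=-(\ell-1)$.

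For integrality, the coefficient of $I_H(\delta_1')$ at an orbit $O$ of type $T$ equals $\mu_H(K_H^\varphi)^{-1}\cdot N_T(O)\cdot(-1)^{n-|T|}(\ell-1)^n\cdot V_T^{(1)}$. Using the Iwahori-type decomposition $K_H^\varphi=\overline U^\varphi\cdot T^\Iw\cdot U^\Iw$, I expect $\Stab_{\eta_T}^{(1)}\subseteq K_H^\varphi$ with index dividing $(\ell-1)^n$ times a power of $\ell$: each $i\in T$ forces the congruence $\bar h_i\bar h_{i+1}^{-1}\equiv 1\pmod\varpi$ on $h\in\Stab_{\eta_T}^{(1)}$, contributing a factor of $\ell-1$, and the constraint $\det h\in J_1$ contributes the remaining factor. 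The prefactor $(\ell-1)^n$ then cancels these denominators to produce an element of $\Z[\ell^{-1}]$. I expect the main obstacle to be a rigorous verification of fact (a) above: tracking how off-diagonal elements of $K_H$ act on the higher super-diagonal entries of $\eta$ after right-absorption by $K_{G_{n+1}}$ requires a careful Iwasawa-style bookkeeping beyond the straightforward diagonal-torus argument which handles (b).
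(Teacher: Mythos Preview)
Your reduction of the trace identity to the equality $I_H(\Tr\,\delta_1')=I_H(\delta')$ is correct, and your use of the diagonal torus to relate $\mathscr{N}_n$ and $\mathscr{N}_n'$ is exactly the paper's projection $\mathtt{pr}:\mathscr{N}_n\to\mathscr{N}_n'$. However, your ``fact (a)'' is unnecessary for this half: once $(1,\eta)$ and $(1,\eta')$ lie in the same $H\times K_G$-orbit, say $(1,\eta)=h(1,\eta')k$, the stabilizers are conjugate by $h$, so their Haar volumes automatically coincide by unimodularity of $H=\GL_n$. The paper simply observes $H(1,\eta)K_G=H(1,\mathtt{pr}(\eta))K_G$, counts $\#\mathtt{pr}^{-1}(\eta')=(\ell-1)^{n-s(\eta')}$, and is done; no statement about the volume depending only on $T(\eta)$ is needed or claimed.

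For integrality your proposed route has a genuine gap. The expected containment $\Stab_\eta^{(1)}\subseteq K_H^\varphi$ is false: take $\eta$ to be the identity (the element of $\mathscr{N}_n'$ with all super-diagonal entries the zero representative). Then $\Stab_\eta^{(1)}=\{h\in K_H:\det h\in J_1\}$, which is much larger than $K_H^\varphi$. So the index computation you sketch cannot proceed as stated, and the obstacle you flag (controlling how off-diagonal elements interact with higher super-diagonals) is not the real issue. The paper sidesteps all of this with a clean trick: it introduces the subgroup
\[
K_{H,1}^\varphi=\{h\in K_H^\varphi: h_{ii}\equiv 1\pmod\varpi\},
\]
which is pro-$\ell$ (its reduction mod $\varpi$ is unipotent upper-triangular) and satisfies $\mu_H(K_H^\varphi)=(\ell-1)^n\mu_H(K_{H,1}^\varphi)$. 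The coefficient in question then equals
\[
\frac{\mu_H(H_\eta)}{\mu_H(K_{H,1}^\varphi)}=\frac{[H_\eta:H_\eta\cap K_{H,1}^\varphi]}{[K_{H,1}^\varphi:H_\eta\cap K_{H,1}^\varphi]},
\]
which lies in $\Z[\ell^{-1}]$ because the denominator, being a finite index in a pro-$\ell$ group, is a power of $\ell$. No information about the structure of $H_\eta$ beyond compactness is used.
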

\begin{proof}
  We have
  \[
    I_H(\delta'_1)=\sum_{\eta\in\mathscr{N}'_n}(-1)^{s(\eta)}\frac{\mu_H(H_\eta)}{\mu_H(K_H^\varphi)}(\ell-1)^n\mathbf{1}[H((1,\eta)K_{G,1}\times J_1)],
  \]
  where $H_\eta$ is the stabilizer
  \[
    H_\eta:=H\cap\big((1,\eta)K_G(1,\eta^{-1})\times J_1\big).
  \]
  We first show that the coefficients are in $\Z[\ell^{-1}]$. Let
  \[
    K_{H,1}^\varphi=\{h\in K_H^\varphi\,|\,h_{ii}\equiv 1\ (\text{mod }\varpi)\text{ for all }i\}.
  \]
  Then $K_{H,1}^\varphi$ is a pro $\ell$-group, and $\mu_H(K_H^\varphi)=(\ell-1)^n\mu_H(K_{H,1}^\varphi)$. Therefore,
  \begin{align*}
    \frac{\mu_H(H_\eta)}{\mu_H(K_H^\varphi)}(\ell-1)^n &=\frac{\mu_H(H_\eta)}{\mu_H(K_{H,1}^\varphi)}\\
    &=\frac{\mu_H(H_\eta)}{\mu_H(H_\eta\cap K_{H,1}^\varphi)}\cdot \frac{\mu_H(H_\eta\cap K_{H,1}^\varphi)}{\mu_H(K_{H,1}^\varphi)}\\
    &=\frac{[H_\eta:H_\eta\cap K_{H,1}^\varphi]}{[K_{H,1}^\varphi:H_\eta\cap K_{H,1}^\varphi]}\in\Z[\ell^{-1}].
  \end{align*}
  
  By multiplying the columns and rows of an element $\eta\in\mathscr{N}_n$ by appropriate units, we can define a map $\mathtt{pr}:\mathscr{N}_n\to\mathscr{N}_n'$ such that $H(1,\eta)K_G=H(1,\mathtt{pr}(\eta))K_G$ for all $\eta$. The map $\mathtt{pr}$ also does not change the value of $s(\eta)$. The sizes of the fibres are $\#\mathtt{pr}^{-1}(\eta')=(\ell-1)^{n-s(\eta')}$, so we can decompose $\delta'$ as follows:
  \begin{align*}
    \delta'&=\sum_{\eta\in\mathscr{N}_n}\mu_H(K_H^\varphi)^{-1}(-1)^{s(\eta)}(\ell-1)^{s(\eta)}\mathbf{1}[(1,\eta)K_G\times J_0]\\
    &=\sum_{\eta'\in\mathscr{N}_n'}\mu_H(K_H^\varphi)^{-1}(-1)^{s(\eta')}(\ell-1)^n\cdot(\ell-1)^{-(n-s(\eta'))}\sum_{\mathtt{pr}(\eta)=\eta'}\mathbf{1}[(1,\eta)K_G\times J_0].
  \end{align*}
  Therefore,
  \[
  \begin{split}
    \Tr_{K_G\times J_0}^{K_G\times J_1}({\delta}'_1)-{\delta}'=\sum_{\eta'\in\mathscr{N}_n'}\mu_H(K_H^\varphi)^{-1} & (-1)^{s(\eta')}(\ell-1)^n \\
    & \times \left(\mathbf{1}[(1,\eta')K_G\times J_0]-\frac{1}{(\ell-1)^{n-s(\eta')}}\sum_{\mathtt{pr}(\eta)=\eta'}\mathbf{1}[(1,\eta)K_G\times J_0]\right).
  \end{split}
  \]
  After applying $I_H$, the term in the parenthesis becomes 0, giving the required result.
\end{proof}

\subsection{Summary}\label{ss:TameRelation}
We now summarize the above computations. Let $\sh{L}$ be the Hecke operator defined in Proposition~\ref{prop:HeckeL}, so $\Tr\pi(\sh{L})=L\big(\frac{1}{2},\pi\big)^{-1}$ for all unramified representations $\pi$ of $\tG$.
\begin{prop}\label{prop:TameRelation}
There exists an element
\[
  \delta\in C_c^\infty(\widetilde{X},\Z[\ell^{-1}])^{K_G\times J_1}
\]
such that 
\[
  \Tr_{K_G\times J_0}^{K_G\times J_1}\delta=\sh{L}^\vee\cdot\indf[H\cdot(K_G\times J_0)]\in C_c^\infty(\widetilde{X},\Z[\ell^{-1}])^{K_G\times J_0}.
\]
\end{prop}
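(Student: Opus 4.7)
The plan is to combine Propositions~\ref{prop:TameZeta} and \ref{prop:TameIntegral}, which together furnish everything needed. Both results have already been established by direct local computation; the summary proposition merely packages them in a form convenient for the Euler system application. In particular, all the nontrivial work has already been done, and no further obstruction remains.

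Concretely, I would define
\[
  \delta = (-1)^n \ell^{-\frac{1}{6}n(n+1)(n+2)}\, I_H(\delta_1'),
\]
where $\delta_1'$ is the test function on $\widetilde{G}$ given by \eqref{eqn:tame_twist}. Since $(-1)^n \ell^{-\frac{1}{6}n(n+1)(n+2)} \in \Z[\ell^{-1}]$, and since Proposition~\ref{prop:TameIntegral} asserts that $I_H(\delta_1') \in C_c^\infty(\widetilde{X}, \Z[\ell^{-1}])^{K_G \times J_1}$, it follows that $\delta$ lies in the claimed integral function space with the correct level of equivariance.

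For the norm relation, I would apply the trace map $\Tr_{K_G \times J_0}^{K_G \times J_1}$, which is $\Z[\ell^{-1}]$-linear, to get
\[
  \Tr_{K_G \times J_0}^{K_G \times J_1}\delta = (-1)^n \ell^{-\frac{1}{6}n(n+1)(n+2)}\, \Tr_{K_G \times J_0}^{K_G \times J_1} I_H(\delta_1') = (-1)^n \ell^{-\frac{1}{6}n(n+1)(n+2)}\, I_H(\delta'),
\]
where the second equality is exactly the second assertion of Proposition~\ref{prop:TameIntegral}. Now apply Proposition~\ref{prop:TameZeta} to identify the right-hand side with $\sh{L}^\vee \cdot \delta_0 = \sh{L}^\vee \cdot \indf[H \cdot (K_G \times J_0)]$, completing the verification.

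There is no genuine obstacle: the hard work is absorbed into the two preceding propositions. The ``hard part,'' in retrospect, was Lemma~\ref{lem:Birch} (the local Birch lemma), which underpinned the zeta integral computation of Proposition~\ref{prop:TameZeta}, together with the combinatorial bookkeeping of Proposition~\ref{prop:TameIntegral} that relates $\delta'$ to $\delta_1'$. The only point to double-check when writing out the proof is that the scalar $(-1)^n \ell^{-\frac{1}{6}n(n+1)(n+2)}$ is genuinely in $\Z[\ell^{-1}]$ (it is, being $\pm$ a power of $\ell^{-1}$), so the integrality passes through without modification.
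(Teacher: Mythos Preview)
Your proposal is correct and follows exactly the same approach as the paper: define $\delta=(-1)^n\ell^{-\frac{1}{6}n(n+1)(n+2)}I_H(\delta_1')$ and deduce the result from Propositions~\ref{prop:TameIntegral} and \ref{prop:TameZeta}. The paper's own proof is just a one-line reference to these two propositions, while you have written out the same argument with more detail.
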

\begin{proof}
Let $\delta=(-1)^n\ell^{-\frac{1}{6}n(n+1)(n+2)}I_H\big({\delta}'_1\big)$, where ${\delta}'_1$ was defined in equation~\eqref{eqn:tame_twist}. The proposition follows from Propositions~\ref{prop:TameIntegral} and \ref{prop:TameZeta}.
\end{proof}

\section{Construction of an Euler system}
We now use the formalism described in the previous sections to construct an anticyclotomic Euler system.

\subsection{Automorphic set-up}
Let $\pi$ be a cuspidal automorphic representation of $G(\A)$ of weight $\xi$, where $\xi$ is perfectly interlacing (Definition~\ref{def:Interlace}). Let $p$ be a rational prime. We can find an open compact subgroup $K=\prod_{\ell} K_\ell\subseteq G(\A_f)$ such that:
\begin{itemize}
  \item $\pi^K\neq 0$.
  \item Any subgroup of $K$ is neat (see Remark~\ref{remark:neat}).
  \item $K_\q$ has an Iwahori factorization for each place $\q|p$.
  \item The projector $\mathtt{t}$ considered in Subsection~\ref{ss:AJMap} exists.
\end{itemize}
Let $S$ be a finite set of places of $F$ containing all places above 2 and $p$, and such that away from $S$, both $E$ and $\pi$ are unramified, the local component of $K$ is a hyperspecial maximal compact subgroup, and the local component of the projector $\mathtt{t}$ is the identity.

We also recall the coefficient field notations introduced in Subsection~\ref{ss:IntCoeff}. Let $\Phi$ be a finite extension of $\Q_p$ such that $E$ splits in $\Phi$ and the representation $\pi_f$ and its associated Galois representation $\rho_\pi$ can both be defined over $\Phi$. Let $\Or$ be the ring of integers of $\Phi$, and let $t$ be the uniformizer used to construct integral representations. Define $\nu_\Phi$ to be the additive valuation on $\Phi$, normalized by $\nu_\Phi(t)=1$.

For classes above $p$ and the wild norm relations, we will need to make additional assumptions. Suppose $\p$ is a place of $F$ above $p$ such that the following conditions hold:
\begin{equation}\label{eqn:spl}
  \begin{cases}
  \text{$\p$ splits in $E$.}\\
  \text{$\pi$ is unramified at $\p$.}\\
  \text{$K_\p$ is the fixed Iwahori subgroup of $\GL_n(F_\p)\times\GL_{n+1}(F_\p)$ used in Section~\ref{sec:Wild}.}
  \end{cases}\tag{spl}
\end{equation}
In Section~\ref{sec:Wild} we defined the Hecke operator $\sh{U}_\p=\mu_G(K_\p)^{-1}[K_\p\tau^{-1} K_\p]$, where
\[
  \tau=\left(\diag(\varpi^n,\cdots,\varpi),\diag(\varpi^n,\cdots,\varpi,1)\right)\in G(F_\p)
\]
and $\varpi$ is a uniformizer of $F_\p$. This is independent of the choice of the Haar measure $\mu_G$. Let $\mu$ be the highest weight character of $\xi$. Then by the definition of the integral lattices,
\[
  \sh{U}_\p|_{\h^\bullet_\et(\Sh_G,\mathbb{V}_\xi)}=t^{\nu_\Phi(\mu(\tau))}\sh{U}_\p|_{\h^\bullet_\et(\Sh_G,\mathbb{L}_\xi)}.
\]
In other words, we have an automatic divisibility of $\sh{U}_\p$-eigenvalues. This also follows by observing that $V_\pi$ is crystalline at $\p$, so its Newton polygon lies above its Hodge polygon. The following definition takes this divisibility into account.
\begin{definition}\label{defn:ord}
  The representation $\pi$ is \emph{ordinary} at $\p$ if there exists a non-zero $v\in \pi_\p^K$ and $\lambda_\p\in\Or^\times$ such that $\sh{U}_\p\cdot v=\lambda_\p t^{\nu_\Phi(\mu(\tau))}v$.
\end{definition}

As expected, we will need to assume $\pi$ is ordinary at $\p$ in order to construct norm-compatible classes above $\p$ (Corollary~\ref{cor:ES}). However, we will not need it for the tame norm relations (Theorem~\ref{thm:ES}).

\subsection{Twisting elements}
Let $\mathscr{L}$ be the set of places of $F$ away from $S$ that split in $E$. For each $\ell\in\mathscr{L}$, fix a place $\lambda$ of $E$ above $\ell$. Finally, let $\mathscr{R}^p$ be the set of ideals of $F$ of the form
\[
  \m=\ell_1\cdots\ell_i,\text{ where }i\geq 0, \text{ and }\ell_1\cdots,\ell_i\in\mathscr{L}\text{ are distinct}.
\]
If \eqref{eqn:spl} holds for the place $\p$, then let $\mathscr{R}$ be the set of ideals of the form $\p^t\mathfrak{r}$, where $t\geq 0$ and $\mathfrak{r}\in\mathscr{R}^p$. Let 
$\mathscr{R}^{(p)}$ mean $\mathscr{R}$ if \eqref{eqn:spl} holds and $\mathscr{R}^p$ otherwise.

Given $\m\in\mathscr{R}^{(p)}$, let $E[\m]$ be the ring class field with conductor $\m$. Recall from the discussion of Subsection~\ref{ss:Extension} that $\spec E[\m]$ can be naturally identified with the Shimura variety $\Sh_1(J[\m])$, where $J[\m]$ is the open compact subgroup of $\U(\mathtt{V}_1)(\A_f)$ consisting of all elements which are 1 modulo $\m$.

For each $\ell\in\mathscr{L}$, the choice of the place $\lambda$ gives an identification
\[
  (H,G)\times_F F_\ell\simeq(\GL_n(F_\ell),\GL_n(F_\ell)\times\GL_{n+1}(F_\ell)).
\]
Let $\delta_\ell\in C_c^\infty(\widetilde{X}(F_\ell),\Z[\Norm\ell^{-1}])$ be the element $\delta$ defined in Subsection~\ref{ss:TameRelation}.\footnote{There is a change of notation: here $\ell$ is a place of $F$, but in Section~\ref{sec:Tame}, $\ell$ denotes the size of its residue field.} If \eqref{eqn:spl} holds, then for each integer $t\geq 0$, let $\delta_{\p^t}\in C_c^\infty(\widetilde{X}(F_\p),\Z)$ be the element $\delta^{(t)}$ defined in Subsection~\ref{ss:WildRelation}. For any place $\ell$, define the basic element $\delta_{\ell,0}$ to be the indicator function of $H(F_\ell)\cdot(K_\ell\times\Or_\ell^\times)\subseteq\widetilde{X}(F_\ell)$.
\begin{definition}
  Let $\m=\p^t\ell_1\cdots\ell_i\in\mathscr{R}^{(p)}$, then define
  \[
    \delta[\m]=\delta_{\p^t}\delta_{\ell_1}\cdots\delta_{\ell_i}\prod_{\ell\nmid\m}\delta_{\ell,0}\in C_c^\infty(\widetilde{X}(\A_f)^+,\Z_p).
  \]
\end{definition}
By construction, $\delta[\m]$ is invariant under right translation by $K\times J[\m]$. Moreover, the two norm relations Propositions~\ref{prop:WildRelation} and \ref{prop:TameRelation} can be summarized as follows.
\begin{prop}\label{prop:AbstractNR}
If $\m,\m\ell\in\mathscr{R}^{(p)}$, then
\[
  \Tr_{J[\m]}^{J[\m\ell]}\delta[\m\ell]=\begin{cases}
    \sh{U}_\p\cdot\delta[\m] & \ell=\p\\
    \sh{L}_\ell^\vee\cdot\delta[\m] & \ell\neq\p.
  \end{cases}
\]
\end{prop}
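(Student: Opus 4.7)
The plan is to reduce the global norm relation to the two local norm relations already established (Propositions~\ref{prop:WildRelation} and \ref{prop:TameRelation}), via a restricted tensor product decomposition. Concretely, the function space $C_c^\infty(\widetilde{X}(\A_f)^+, \Z_p)$ factorizes as the restricted tensor product $\bigotimes'_\ell C_c^\infty(\widetilde{X}(F_\ell), \Z_p)$ with distinguished vectors the basic elements $\delta_{\ell,0} = \indf[H(F_\ell)\cdot(K_\ell\times\Or_\ell^\times)]$. By construction, $\delta[\m]$ is the pure tensor whose component at $\ell$ is $\delta_{\p^t}$, $\delta_\ell$, or $\delta_{\ell,0}$, according to whether $\ell = \p$ appears with exponent $t$ in $\m$, appears tamely in $\m$, or does not divide $\m$.

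The first observation I would record is that $\delta[\m\ell]$ and $\delta[\m]$ agree tensor-factor by tensor-factor at every place other than the distinguished $\ell$, and similarly $J[\m\ell] \subseteq J[\m]$ differ only at $\ell$, where $J_\ell[\m\ell]$ is either $J_{\ell,1}$ (tame case, $\ell \neq \p$) or $J_{\p,t+1}$ (wild case, with $\p^t \| \m$). Consequently, the global trace $\Tr_{J[\m]}^{J[\m\ell]}$ acts as the identity on tensor factors away from $\ell$ and as the local trace $\Tr_{J_\ell[\m]}^{J_\ell[\m\ell]}$ on the $\ell$-component.

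Next I would invoke the local norm relations at the place $\ell$. If $\ell = \p$ with $\p^t \| \m$, then Proposition~\ref{prop:WildRelation} gives $\Tr_{K_\p \times J_{\p,t}}^{K_\p \times J_{\p,t+1}} \delta_{\p^{t+1}} = \sh{U}_\p \cdot \delta_{\p^t}$. If $\ell \neq \p$ is a tame place, then Proposition~\ref{prop:TameRelation} gives $\Tr_{K_\ell \times \Or_\ell^\times}^{K_\ell \times J_{\ell,1}} \delta_\ell = \sh{L}_\ell^\vee \cdot \delta_{\ell,0}$. Since $\sh{U}_\p$ (respectively $\sh{L}_\ell^\vee$) is supported at the single local factor at $\p$ (respectively $\ell$) and the $C_c^\infty(\widetilde{G}(\A_f),\Z_p)$-module structure on the restricted tensor product is the tensor product of the local module structures, re-assembling the tensors gives the two claimed identities.

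The only real bookkeeping concern is integrality: \emph{a priori} the tame local element $\delta_\ell$ lives in $C_c^\infty(\widetilde{X}(F_\ell), \Z[\Norm\ell^{-1}])$ rather than $\Z_p$. This is not an obstacle, however, because every $\ell \in \mathscr{L}$ lies above a rational prime different from $p$, so $\Norm\ell$ is a unit in $\Z_p$; hence the $\Z[\Norm\ell^{-1}]$-coefficients descend to $\Z_p$ and $\delta[\m]$ genuinely defines an element of $C_c^\infty(\widetilde{X}(\A_f)^+,\Z_p)$ to which the argument above applies verbatim.
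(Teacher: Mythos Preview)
Your proposal is correct and matches the paper's approach exactly: the paper states this proposition without proof, presenting it as an immediate repackaging of the local norm relations (Propositions~\ref{prop:WildRelation} and~\ref{prop:TameRelation}) via the factorization of $\delta[\m]$ as a pure tensor. Your write-up spells out precisely the bookkeeping the paper leaves implicit, including the integrality point that $\Norm\ell\in\Z_p^\times$ for $\ell\in\mathscr{L}$.
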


\begin{remark}\label{rmk:base-class}
While the elements $\delta[\m]$ in general depend on the choices of the primes $\lambda$ made above, the element $\delta[1]$ clearly does not. 
\end{remark}

\subsection{Euler system}
Fix a non-zero vector $v\in\pi_f^K$. If \eqref{eqn:spl} holds and $\pi$ is ordinary at $\p$, then we further assume that $v$ is an eigenvector for $\sh{U}_\p$ with eigenvalue $\lambda_\p t^{\nu_\Phi(\mu(\tau))}$, where $\lambda_\p\in\Or^\times$. Using this as the vector in equation~\eqref{eq:AJ}, we obtain an $\Or$-lattice $T_\pi\subseteq V_\pi$ and the Abel--Jacobi map
\[
  \mathrm{AJ}_{\mathtt{t},v}:C_c^\infty(\tX(\A_f)^+,\Z_p)^{K\times U}\to\h^1(E,T_\pi\otimes\Or[\Sh_1(U)]).
\]
Observe that by our choice of Iwahori subgroup at $\p$, the twisting elements $\delta[\m]$ are in the domain of $\mathrm{AJ}_{\mathtt{t},v}$. Take $U=J[\m]$ as before. Then by equation~\eqref{eq:Gal1} and Shapiro's lemma,
\begin{equation}\label{eq:Shapiro}
  \h^1(E,T_\pi\otimes\Or[\Sh_1(J[\m])])=\h^1(E[\m],T_\pi).
\end{equation}
Let $z_\m$ be the image of $\mathrm{AJ}_{\mathtt{t},v}(\delta[\m])$ under this identification. Our main theorem is then a direct consequence of Proposition~\ref{prop:AbstractNR} and the Hecke-equivariance property of $\mathrm{AJ}_{\mathtt{t},v}$.
\begin{theorem}\label{thm:ES}
  If $\m,\m\ell\in\mathscr{R}^{(p)}$, then
  \[
    \Tr_{E[\m]}^{E[\m\ell]}z_{\m\ell}=\begin{cases}
      \lambda_\p z_\m & \text{if }\ell=\p\\
      P_\lambda(\Fr_\lambda^{-1})z_\m & \text{if }\ell\neq\p,
    \end{cases}
  \]
  where $P_\lambda(X)$ is the polynomial such that for all $s\in\C$,
  \begin{equation}\label{eq:CharPoly}
    P_\lambda(\Norm\ell^{-s})=L\Big(s+\frac{1}{2},\pi_\ell^\vee\Big)^{-1}
  \end{equation}
  and $\Fr_\lambda$ is the arithmetic Frobenius.
\end{theorem}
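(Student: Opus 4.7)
The plan is to apply $\mathrm{AJ}_{\mathtt{t},v}$ to the abstract norm relation of Proposition~\ref{prop:AbstractNR}, and then translate both sides into Galois cohomology via the Shapiro isomorphism \eqref{eq:Shapiro} together with the Hecke-equivariance of $\mathrm{AJ}_{\mathtt{t},v}$ noted at the end of Subsection~\ref{ss:AJMap}.

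For the left-hand side, the first task is to check that the trace map $\Tr_{J[\m]}^{J[\m\ell]}$ on the $\U(\mathtt{V}_1)$-factor of the level matches the corestriction $\Cor_{E[\m]}^{E[\m\ell]}$ under Shapiro. This follows from \eqref{eq:Sh1}, which identifies the \'etale cover $\Sh_1(J[\m\ell])\to\Sh_1(J[\m])$ with $\spec E[\m\ell]\to\spec E[\m]$: the Hecke trace at $\U(\mathtt{V}_1)$-level is precisely the cover trace, and by Shapiro this becomes Galois corestriction on the corresponding $\h^1$.

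For the case $\ell=\p$, Hecke-equivariance of $\mathrm{AJ}_{\mathtt{t},v}$ applies to the $G_p^-$-supported operator $\sh{U}_\p$. Ordinarity gives $\sh{U}_\p v=\lambda_\p\,t^{\nu_\Phi(\mu(\tau))}v$, and the factor $t^{\nu_\Phi(\mu(\tau))}$ is exactly what Lemma~\ref{lem:IntRepn} absorbs when comparing the action on the algebraic representation $\xi$ with the integral version $\xi^\circ$ underlying $\mathbb{L}_\xi$. So the action on the integral class $z_\m$ reduces to the unit scalar $\lambda_\p$.

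For the case $\ell\neq\p$, the operator $\sh{L}_\ell^\vee$ belongs to $\sh{H}_G\otimes\C[\mathtt{T}^{\pm 1}]$ by Proposition~\ref{prop:HeckeL}, so its image in the Satake ring is an explicit polynomial in both the Satake generators for $G(F_\ell)$ and the generator $\mathtt{T}$ coming from $\U(\mathtt{V}_1)(F_\ell)$. Under $\mathrm{AJ}_{\mathtt{t},v}$, Hecke-equivariance turns the $\sh{H}_G$-coefficients into Satake eigenvalues on $v\in\pi_\ell^K$, while each power of $\mathtt{T}^{\pm 1}$ acts on the $\Or[\Sh_1(J[\m])]$-coefficient through the Shimura reciprocity compatibility noted at the end of Subsection~\ref{ss:Extension}: $\mathtt{T}$ is sent to $\Art_E$ of a uniformizer at $\lambda$, which (with our convention that $\Art$ sends uniformizers to geometric Frobenius) corresponds, after transport through Shapiro, to the operator $\Fr_\lambda^{-1}$ on $\h^1(E[\m],T_\pi)$. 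Combining these two streams shows that $\sh{L}_\ell^\vee$ acts on $z_\m$ as an explicit polynomial in $\Fr_\lambda^{-1}$ whose coefficients are Satake values of $\pi_\ell$; comparing with the definition \eqref{eq:CharPoly} of $P_\lambda$ identifies this polynomial as $P_\lambda(\Fr_\lambda^{-1})$.

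The most delicate step is this final identification. It requires careful tracking of dualities ($\pi_\ell$ versus $\pi_\ell^\vee$ and $\sh{L}$ versus $\sh{L}^\vee$), the normalization of the Satake isomorphism used in Proposition~\ref{prop:HeckeL}, and the convention for $\Fr_\lambda$ (arithmetic vs.\ geometric). Once these are pinned down, the theorem is an immediate consequence of Proposition~\ref{prop:AbstractNR} and the properties of $\mathrm{AJ}_{\mathtt{t},v}$ established in Subsection~\ref{ss:AJMap}.
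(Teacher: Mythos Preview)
Your proposal is correct and follows exactly the paper's proof: apply $\mathrm{AJ}_{\mathtt{t},v}$ to Proposition~\ref{prop:AbstractNR}, invoke its Hecke-equivariance (including the normalization from Lemma~\ref{lem:IntRepn} at $\p$), and for $\ell\neq\p$ unpack $\sh{L}_\ell^\vee$ via Satake while identifying the $\U(\mathtt{V}_1)$-Hecke variable $\mathtt{T}$ with a Frobenius through Shimura reciprocity.

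One small correction in your sign bookkeeping: with the conventions of Subsection~\ref{ss:Extension} (the map $s\mapsto \con{s}/s$) and the paper's Artin normalization, the element $s$ with $\con{s}/s=\varpi$ at a split place is a uniformizer at $\bar\lambda$, not at $\lambda$; its image under $\Art_E$ is the geometric Frobenius at $\bar\lambda$, which in the anticyclotomic tower equals the \emph{arithmetic} Frobenius $\Fr_\lambda$. Thus $\mathtt{T}\leftrightarrow\Fr_\lambda$ (equivalently $\mathtt{T}^{-1}\leftrightarrow\Fr_\lambda^{-1}$), as the paper records, rather than $\mathtt{T}\leftrightarrow\Fr_\lambda^{-1}$. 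Since the Satake transform of $\sh{L}_\ell^\vee$ is a polynomial in $T^{-1}$, this is precisely what delivers $P_\lambda(\Fr_\lambda^{-1})$; you correctly flagged this step as the one needing care.
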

\begin{proof}
  Under the identification~\eqref{eq:Shapiro}, the trace map from $J[\m\ell]$ to $J[\m]$ is identified with the trace map from $E[\m\ell]$ to $E[\m]$. This is a consequence of the definition of the Galois action on $\Sh_1$. It follows that
  \[
    \Tr_{E[\m]}^{E[\m\ell]}z_{\m\ell}=\Tr_{J[\m]}^{J[\m\ell]}\mathrm{AJ}_{\mathtt{t},v}(\delta[\m\ell])=\mathrm{AJ}_{\mathtt{t},v}\big(\Tr_{J[\m]}^{J[\m\ell]}\delta[\m\ell]\big),
  \]
  where the final equality follows from the equivariance of $\mathrm{AJ}_{\mathtt{t},v}$ under Hecke operators on $\U(\mathtt{V}_1)$.
  
  The traces on the right hand side are given by Proposition~\ref{prop:AbstractNR}. First suppose $\ell=\p$. The element $\tau^{-1}$ lies in $\tG(\A_f)^-$, so we have the equivariance property
  \[
    \mathrm{AJ}_{\mathtt{t},v}\big(\Tr_{J[\m]}^{J[\m\ell]}\delta[\m\ell]\big)=\mathrm{AJ}_{\mathtt{t},v}\big(\sh{U}_\p\cdot\delta[\m]\big)=\lambda_\p\cdot\mathrm{AJ}_{\mathtt{t},v}(\delta[\m]),
  \]
  where the final step follows from the choice of the vector $v$.
  
  If $\ell\neq\p$, then we need to consider the spherical Hecke operator $\sh{L}^\vee_\ell$. Using the notations in Proposition~\ref{prop:HeckeL}, the Satake transform of $\sh{L}^\vee_\ell$ is
  \[
    \prod_{i=1}^n\prod_{j=1}^{n+1}\big(1-A_i^{-1}B_j^{-1}T^{-1}\Norm\ell^{-\frac{1}{2}}\big).
  \]
  The inverses are due to the presence of the involution $(-)^\vee$. We have a decomposition
  \[
    \sh{H}_{\tG(F_\ell)}=\sh{H}_{G(F_\ell)}\otimes_\C C_c^\infty(\Or_\ell^\times\backslash F_\ell^\times,\C)\simeq\sh{H}_{G(F_\ell)}\otimes_\C \C[\mathtt{T}^{\pm 1}],
  \]
  where $\mathtt{T}$ is the indicator function of $\varpi\Or_\ell^\times$ and gets sent to $T$ under the Satake isomorphism.
  
  By the Shimura reciprocity law, the Hecke action of $\mathtt{T}^{-i}$ is identified with $\Fr_\lambda^{-i}$. Let $\alpha_1,\cdots,\alpha_n$ (resp.~$\beta_1,\cdots,\beta_{n+1}$) be the Satake parameters of $\pi_{n,\ell}$ (resp.~$\pi_{n+1,\ell}$). Then
  \[
    \mathrm{AJ}_{\mathtt{t},v}(\sh{L}_\ell^\vee\cdot\delta[\m])=\prod_{i=1}^n\prod_{j=1}^{n+1}\big(1-\alpha_i^{-1}\beta_j^{-1}\Norm\ell^{-\frac{1}{2}}\Fr_\lambda^{-1})\cdot\mathrm{AJ}_{\mathtt{t},v}(\delta[\m]).
  \]
  Comparing with the definition of the local $L$-factor, the left hand side is exactly $P_\lambda(\Fr_\lambda^{-1})$.
\end{proof}

\begin{cor}\label{cor:ES}
  Suppose in addition that $\pi$ is ordinary at $\p$. Then there exists a collection of class
  \[
    \big\{c_\m\in \h^1(E[\m],T_\pi)\,|\,\m\in\mathscr{R}\}
  \]
  satisfying the norm relations
  \[
    \Tr_{E[\m]}^{E[\m\ell]}c_{\m\ell}=\begin{cases}
      c_\m & \text{if }\ell=\p\\
      P_\lambda(\Fr_\lambda)c_\m & \text{if }\ell\neq\p.
    \end{cases}
  \]
\end{cor}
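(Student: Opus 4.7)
The plan is to deduce the corollary from Theorem~\ref{thm:ES} by a unit rescaling that absorbs the $\sh{U}_\p$-eigenvalue into the normalization of the classes. Write each $\m\in\mathscr{R}$ uniquely as $\m=\p^t\mathfrak{r}$ with $\mathfrak{r}\in\mathscr{R}^p$, and define
\[
  c_\m := \lambda_\p^{-t} z_\m \in \h^1(E[\m],T_\pi).
\]
This lives in the integral cohomology because the ordinarity hypothesis of Definition~\ref{defn:ord} gives $\lambda_\p\in\Or^\times$, so $\lambda_\p^{-t}$ is a unit scalar acting on the lattice $T_\pi$. This rescaling is the whole content of the proof: it removes the only obstruction to turning the tautological system $\{z_\m\}$ into a norm-compatible one at $\p$.

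For the wild direction ($\ell=\p$), Theorem~\ref{thm:ES} reads $\Tr_{E[\p^t\mathfrak{r}]}^{E[\p^{t+1}\mathfrak{r}]}z_{\p^{t+1}\mathfrak{r}}=\lambda_\p z_{\p^t\mathfrak{r}}$. Since $\lambda_\p^{-(t+1)}$ is a Galois-equivariant scalar it commutes with corestriction, so multiplying both sides by $\lambda_\p^{-(t+1)}$ yields
\[
  \Tr_{E[\p^t\mathfrak{r}]}^{E[\p^{t+1}\mathfrak{r}]} c_{\p^{t+1}\mathfrak{r}} = \lambda_\p^{-(t+1)}\cdot\lambda_\p\cdot z_{\p^t\mathfrak{r}} = c_{\p^t\mathfrak{r}},
\]
which is the required wild norm relation.

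For the tame direction ($\ell\neq\p$), we have $\ord_\p(\m\ell)=\ord_\p(\m)=t$, so $c_{\m\ell}=\lambda_\p^{-t}z_{\m\ell}$ and $c_\m=\lambda_\p^{-t}z_\m$. The scalar $\lambda_\p^{-t}$ commutes with the corestriction $\Tr_{E[\m]}^{E[\m\ell]}$ and with $\Fr_\lambda$, hence the tame relation from Theorem~\ref{thm:ES} propagates directly:
\[
  \Tr_{E[\m]}^{E[\m\ell]} c_{\m\ell} = \lambda_\p^{-t}\Tr_{E[\m]}^{E[\m\ell]} z_{\m\ell} = \lambda_\p^{-t} P_\lambda(\Fr_\lambda)z_\m = P_\lambda(\Fr_\lambda)c_\m.
\]

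There is no genuine obstacle: the corollary is a cosmetic reformulation of Theorem~\ref{thm:ES}, and the single substantive input is the invertibility $\lambda_\p\in\Or^\times$ provided by the ordinarity assumption. If that assumption were weakened to $\sh{U}_\p$-semi-ordinarity (with $\lambda_\p$ merely nonzero in $\Phi$), the construction would only yield a system in $\h^1(E[\m],V_\pi)$ rather than in $\h^1(E[\m],T_\pi)$, so the ordinarity hypothesis is exactly what is needed.
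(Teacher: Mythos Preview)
Your proof is correct and follows precisely the paper's own approach: define $c_\m = \lambda_\p^{-t} z_\m$ where $t = \ord_\p(\m)$, use the ordinarity hypothesis to ensure $\lambda_\p\in\Or^\times$ so the classes remain in $T_\pi$, and deduce the norm relations from Theorem~\ref{thm:ES}. The paper's proof is terser but identical in content.
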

\begin{proof}
  Since (ord) holds for $\pi$, we may further assume $\lambda_\p$ is a unit in the previous theorem. For $\m\in\mathscr{R}$, define $c_\m=\lambda_\p^{-t}z_\m$, where $t$ is the power of $\p$ appearing in the factorization of $\m$. They lie in the same lattice since $\lambda_\p\in\Or^\times$, and the norm relations are immediate from the theorem.
\end{proof}

\begin{remark}
  In both Theorem~\ref{thm:ES} and Corollary~\ref{cor:ES}, the base class $c_1=z_1$ is obtained by applying an Abel--Jacobi map to the diagonal cycle studied in the arithmetic Gan--Gross--Prasad conjecture. It is expected to be generically non-trivial, and its vanishing is conjecturally related to the central derivative of an $L$-function. Note that the classes $c_1= z_1$ do not depend on the choices made for the definition of $\mathscr{R}^p$.
\end{remark}

\subsection{Applications}
Let $\Pi=\Pi_n\times\Pi_{n+1}$ be a RACSDC automorphic form on $\GL_n(\A_E)\times\GL_{n+1}(\A_E)$. Suppose the archimedean component of $\Pi$ has the same infinitesimal character as a perfectly interlacing algebraic representation $\xi$. By the self-duality, the root number $\varepsilon\big(\frac{1}{2},\Pi\big)$ is $\pm 1$.
\begin{assumption}
  \(
    \varepsilon\big(\frac{1}{2},\Pi\big)=-1.
  \)
\end{assumption}
Following the discussion of \cite[Chapter 27]{GGPConjecture} (where the conjectural Arthur multiplicity formula is established in our special case by \cite{KMSW}), there exist
Hermitian spaces $\mathtt{V}_n\subseteq\mathtt{V}_{n+1}$ and a cuspidal automorphic form $\pi$ on $G(\A_F)$ of the type we have been considering such that $\Pi$ is the base change of $\pi$. The local components of $\mathtt{V}_n,\mathtt{V}_{n+1}$,and $\pi$ are specified by the local conjecture \cite[Chapter 17]{GGPConjecture}. In particular, the perfect interlacing condition on weights forces the archimedean signatures of $\mathtt{V}_n$ and $\mathtt{V}_{n+1}$ to be standard indefinite (Subsection~\ref{ss:Herm}).

Assuming Hypothesis~\ref{conj:Coh}, the Galois representation $T_\Pi$ constructed from the cohomology of certain unitary Shimura varieties is a lattice in the Galois representation $V_\Pi$ attached to $\Pi$, in the sense that
\[
  L(s,V_\Pi)=L\Big(s+\frac{1}{2},\Pi\Big).
\]
From this, it is easy to verify that
\[
  P_\lambda(X)=\det(1-X\Fr_\lambda|V_\Pi)^{-1},
\]
where $P_\lambda$ is the polynomial from the norm relations defined by~\eqref{eq:CharPoly}. The presence of the contragredient there matches with our choice of using the arithmetic Frobenius here. It follows that our norm relation in Corollary~\ref{cor:ES} is indeed the split anti-cyclotomic Euler system norm relation for the Galois representation $V_\Pi$. By applying the results of \cite{JNS}, we obtain the following theorem.

\begin{theorem}\label{thm:App} Let $z_E = \Tr_E^{E[1]} z_1 \in \h^1_f(E,V_\Pi)$.  Suppose
\begin{itemize} 
\item[\rm (i)] $V_\Pi$ is absolutely irreducible,
\item[\rm (ii)] there exists $\sigma \in \Gal_E$ that fixes $E[1](\mu_{p^\infty})$ and such that $\dim_\Phi V_\Pi/(\sigma-1)V_\Pi = 1$, 
\item[\rm (iii)] either 
\subitem{\rm (a)} $\mathscr{R}^{(p)} = \mathscr{R}$ and $\Pi$ is ordinary at $\mathfrak{p}$, or
\subitem{\rm (b)} $\mathscr{R}^{(p)} = \mathscr{R}^p$ and there exists $\gamma \in\Gal_E$ such that $\gamma$ fixes $E[1](\mu_{p^\infty})$
and $V_\Pi/(\gamma-1)V_\Pi = 0$.
\end{itemize}
Then
\begin{equation}\label{rk1BK}
z_E\neq 0 \implies \dim_\Phi\h^1_f(E,V_\Pi) = 1.
\end{equation}
\end{theorem}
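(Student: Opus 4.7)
The plan is to deduce Theorem~\ref{thm:App} by feeding the Euler system from Theorem~\ref{thm:ES} and Corollary~\ref{cor:ES} into the axiomatic machinery of \cite{JNS}. The classes $c_\m \in \h^1(E[\m], T_\Pi)$ constructed there satisfy exactly the split anticyclotomic norm relation $\Tr_{E[\m]}^{E[\m\ell]} c_{\m\ell} = P_\lambda(\Fr_\lambda) c_\m$ with Euler factor matching the characteristic polynomial of Frobenius on $V_\Pi$, which is the input format required in \emph{loc.~cit.} Hypotheses (i)--(ii) translate directly into the absolute irreducibility and ``big image / rank one coinvariants'' conditions on $V_\Pi$ imposed by that framework, and hypothesis (ii) additionally guarantees, via Chebotarev, that there is a positive density of primes $\ell \in \mathscr{L}$ whose Frobenius has conjugacy class approximating $\sigma$, so that enough Kolyvagin primes are available to run the derivative argument.

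The first step is to check that the classes land in the Bloch--Kato Selmer group: away from $S$ (and $p$) this is the unramifiedness of $c_\m$ at good primes, which follows from proper base change together with the fact that the local $\Gal$-representation on $\h^{2n-1}_\et$ of the Shimura variety is unramified there; at places above $p$ we need the crystalline condition, which for cycle classes in \'etale realizations is automatic since $\mathtt{cyc}(f)$ comes from a geometric cycle defined over $E$. At places in $S$ the issue is collapsed by the projector $\mathtt{t}$ and the fact that we are working with $V_\Pi$ (which is pure of weight $-1$), so $\h^1_{\mathrm{unr}} = \h^1_f$ there as well. This verification is standard but is the step I expect to require the most care, especially confirming the $p$-adic local condition uniformly in $\m$ as the tower ramifies.

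The second step is the case split at $p$. Under (iii)(a), Corollary~\ref{cor:ES} gives norm-compatible classes all the way up the $\p$-tower $\{E[\p^t\mathfrak{r}]\}$, which matches the ordinary Iwasawa-theoretic input of \cite{JNS} directly; the ordinarity of $\Pi$ at $\p$ ensures the $\p$-local condition is preserved under the trace because the unit-root eigenvalue $\lambda_\p$ rescales integrally. Under (iii)(b), we only have the tame Euler system from Theorem~\ref{thm:ES}, and the extra element $\gamma \in \Gal_E$ with $V_\Pi/(\gamma-1)V_\Pi = 0$ serves as the well-known substitute for the vertical norm-compatibility, allowing the Kolyvagin argument of \cite{JNS} to kill the $p$-primary part of the dual Selmer group without needing a $p$-adic deformation.

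The final step is then the conclusion of the Kolyvagin derivative argument: nonvanishing of the base class $z_E = \Tr_E^{E[1]} z_1$ combined with the Euler system norm relations and the big image hypotheses produces, for each cohomology class in the dual Selmer group, Kolyvagin derivatives at squarefree products of primes in $\mathscr{L}$ whose local behaviour forces the class to vanish beyond a one-dimensional subspace, giving $\dim_\Phi \h^1_f(E, V_\Pi) = 1$. No new ingredient beyond \cite{JNS} is needed here; the entire content of the theorem is the construction of the Euler system plus the verification of the local conditions and big-image axioms, and the main obstacle throughout is really the Selmer-condition bookkeeping in step one rather than any genuinely new argument.
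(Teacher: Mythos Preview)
Your proposal is correct and follows the paper's approach exactly: the paper gives no proof of Theorem~\ref{thm:App} beyond the single sentence ``By applying the results of \cite{JNS}, we obtain the following theorem,'' so the entire content is indeed the Euler system construction plus the translation of hypotheses (i)--(iii) into the axioms of that framework. Your expanded sketch of the Selmer-condition checks and the case split at $p$ is more detailed than anything the paper itself supplies, and is in line with what the JNS machinery requires.
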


As an example, we will now give one set of purely automorphic conditions which imply the conditions of Theorem~\ref{thm:App} for all except possibly two primes $p$.

\begin{cor}\label{cor:App}
  Let $\Pi=\Pi_n\times\Pi_{n+1}$ be as before. Suppose there exists finite places $v_n,v_{n+1}$ of $E$ such that
  \begin{itemize}
    \item $v_n$ and $v_{n+1}$ lie above different places in $F$;
    \item for $m=n,n+1$, $\Pi_m$ is a twist of a Steinberg representation at $v_m$;
    \item $\Pi_n$ is unramified at $v_{n+1}$, and $\Pi_{n+1}$ is unramified at $v_n$.
  \end{itemize}
  Then the implication \eqref{rk1BK} holds for all $p$ not dividing the residue characteristics of $v_n$ and $v_{n+1}$.
\end{cor}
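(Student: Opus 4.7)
The plan is to verify conditions (i), (ii), and (iii)(b) of Theorem~\ref{thm:App} under the hypotheses of the corollary; we set $\mathscr{R}^{(p)} = \mathscr{R}^p$, so the ordinarity branch (iii)(a) is not used. Fix a prime $p$ whose residue characteristic differs from those of both $v_n$ and $v_{n+1}$. By local-global compatibility (part of Hypothesis~\ref{conj:Coh}) and local Langlands for $\GL_m$, the Steinberg condition at $v_m$ for $\Pi_m$ translates, at the $p$-adic level, to the statement that the local Weil--Deligne representation of $V_{\Pi_m}$ at $v_m$ is a twisted Steinberg, in particular an irreducible $m$-dimensional Weil--Deligne representation.

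For (i), I would analyze the Weil--Deligne structure of $V_\Pi$ at $v_n$ and $v_{n+1}$. Restricted to a decomposition group at $v_n$, $V_{\Pi_n}$ is (twisted) Steinberg, an $n$-dimensional irreducible Weil--Deligne representation, while $V_{\Pi_{n+1}}$ is unramified and decomposes as a direct sum of $n+1$ unramified characters. Consequently $V_\Pi|_{v_n}$ is a direct sum of $n+1$ copies of $n$-dimensional irreducible Weil--Deligne representations (Steinberg twisted by various unramified characters), so any Weil--Deligne subrepresentation has dimension a multiple of $n$. Symmetrically, any subrepresentation of $V_\Pi|_{v_{n+1}}$ has dimension a multiple of $n+1$. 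A $\Gal_E$-stable subspace $W \subseteq V_\Pi$ is in particular a Weil--Deligne subrepresentation at both places, so $\dim W$ is divisible by $\mathrm{lcm}(n,n+1) = n(n+1) = \dim V_\Pi$; hence $W = 0$ or $V_\Pi$, giving absolute irreducibility.

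For (iii)(b), I would use Chebotarev combined with the fact that $V_\Pi$ is pure of weight $-1$. Pick two distinct places $w, w'$ of $E$, both unramified for $\Pi$ and not above $p$, lying above the same rational prime $\ell$ that splits completely in $E[1]/\mathbb{Q}$ (such $\ell$ exist densely by Chebotarev). Then $N(w) = N(w') = \ell$ and $[w] = [w'] = 1 \in \mathrm{Cl}(E)$, so $\gamma := \mathrm{Frob}_w\mathrm{Frob}_{w'}^{-1}$ lies in $H := \Gal_{E[1](\mu_{p^\infty})}$ (both the cyclotomic character and the class-group component vanish on $\gamma$). Unwinding the Tate twist, $\rho_\Pi(\gamma) = \rho_{\Pi_n}(\gamma) \otimes \rho_{\Pi_{n+1}}(\gamma)$, and for a generic choice of $(w, w')$ within this Chebotarev-dense family the eigenvalues of $\rho_\Pi(\gamma)$ all differ from $1$, giving $V_\Pi/(\gamma-1)V_\Pi = 0$.

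For (ii), I would take $\sigma = \sigma_n\tau$ where $\sigma_n\in I_{v_n}$ is a tame inertia generator (so $\rho_{\Pi_n}(\sigma_n) = u_n$, a regular unipotent, $\rho_{\Pi_{n+1}}(\sigma_n) = \mathrm{id}$, and the cyclotomic character is trivial), and $\tau\in H$ satisfies $\rho_{\Pi_n}(\tau) = \mathrm{id}$ together with $B := \rho_{\Pi_{n+1}}(\tau)$ semisimple having eigenvalue $1$ of geometric multiplicity exactly $1$. Granted such $\tau$, the image $\rho_\Pi(\sigma) = u_n\otimes B$ has a one-dimensional fixed space: in an eigenbasis $\{f_j\}$ of $B$ with $\beta_1 = 1$ and $\beta_j \neq 1$ for $j \geq 2$, and the standard Jordan basis $\{e_i\}$ of $u_n$, the equation $(u_n\otimes B)v = v$ on $v = \sum c_{ij} e_i \otimes f_j$ gives the recursion $c_{ij}(1-\beta_j) = c_{i+1,j}\beta_j$ (with $c_{n,j} = 0$), whose only nontrivial solution is $v = c_{01} e_0\otimes f_1$. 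The main obstacle is producing $\tau$: this requires a Goursat-type argument exploiting that $\rho_{\Pi_n}$ and $\rho_{\Pi_{n+1}}$ are absolutely irreducible of distinct dimensions and so share no common Tannakian subquotient---yielding that $\ker\rho_{\Pi_n}\cap H$ surjects onto a Zariski-dense subset of $\rho_{\Pi_{n+1}}(H)$---combined with the density argument of step (iii)(b) applied to $\rho_{\Pi_{n+1}}$ to locate an element with the required spectral property.
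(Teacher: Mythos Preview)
Your approach diverges from the paper's in all three parts, and each has a gap.

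For (i), the twisted Steinberg is \emph{indecomposable} but not \emph{irreducible} as a Weil--Deligne representation: $\ker N$ is already a proper nonzero sub-WD-representation of $\mathrm{Sp}(m)$. So your divisibility claim ``any WD subrepresentation of $V_\Pi|_{v_n}$ has dimension a multiple of $n$'' is false as stated (for instance $\langle e_1\otimes w\rangle$ is a one-dimensional sub-WD-representation for any eigenvector $w$ of Frobenius on $V_{\Pi_{n+1}}$). This can be repaired by first noting that $V_\Pi$ is semisimple---being a tensor product of absolutely irreducible representations in characteristic $0$---and then arguing with direct \emph{summands} and Krull--Schmidt rather than subobjects, but you have not done so. For (iii)(b), the assertion ``for a generic choice the eigenvalues all differ from $1$'' is not justified: absent any control on the Zariski closure of $\rho_\Pi(H)$ there is no reason such a $\gamma$ exists, and purity alone does not supply it for a ratio of Frobenii. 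For (ii), you correctly identify the obstacle (producing $\tau\in H\cap\ker\rho_{\Pi_n}$ with $\rho_{\Pi_{n+1}}(\tau)$ having a simple eigenvalue $1$) but leave it unresolved; the sketched Goursat/density argument is the entire content of the step and you have not carried it out.

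The paper's proof is organized around a single element $\sigma=\tau_n\tau_{n+1}$, where each $\tau_m\in I_{v_m}$ is chosen so that $\rho_{\Pi_m}(\tau_m)$ is regular unipotent; the unramifiedness hypotheses force $\rho_{\Pi_m}(\tau_{m'})=1$ for $m\neq m'$, so $\rho_{\Pi_n}(\sigma)$ and $\rho_{\Pi_{n+1}}(\sigma)$ are \emph{simultaneously} regular unipotent. Since $\sigma$ is a product of inertial elements at primes not above $p$, it lies in $\Gal_{E[1](\mu_{p^\infty})}$. The paper uses this one $\sigma$ for both (i) and (ii), and then for (iii)(b) invokes Katz's classification of connected semisimple subgroups of $\GL_d$ containing a regular unipotent to pin down the Zariski closure of the image as one of $\mathrm{Sym}^{d-1}\SL_2$, $\SL_d$, or $\Sp_d$, each of which contains a regular semisimple element with no eigenvalue $1$. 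The idea you are missing is to exploit the two Steinberg places \emph{together} to manufacture one element that controls the image, rather than treating each hypothesis with a separate and harder construction; this is precisely what eliminates the need for any Goursat or Chebotarev-density argument.
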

\begin{proof}
  We verify hypotheses (i), (ii), and (iii)(b) of Theorem~\ref{thm:App}.
  \begin{itemize}
      \item[(i)] Let $\{m,m'\} = \{n,n+1\}$. Let $I_{v_m}$ be the inertia subgroup at $v_m$. Local-global compatibility at $v_m$ \cite{Caraiani} implies that there exists an element $\tau_m \in I_{v_m}$ such that $\rho_{\Pi_m}(\tau_m)$ is a unipotent element with only one Jordan block. Moreover, $V_{\Pi_{m'}}$ is unramified at $v_{m}$, so $\rho_{\Pi_{m'}}(\tau_m)=1$.  It follows that $\sigma = \tau_n\tau_{n+1}\in \Gal_E$ is such that $\rho_\Pi(\sigma)$ is unipotent with only one Jordan block.  This implies that $\rho_{\Pi}$ is absolutely irreducible.
      \item[(ii)] The $\sigma$ constructed above satisfies $\dim_\Phi V_\Pi/(\sigma-1)V_\Pi=1$. It is the product of elements in the inertia subgroups $I_{v_n}$ and $I_{v_{n+1}}$, so it fixes $E[1]$. Since $p$ does not divide the residue characteristics of $v_n$ and $v_{n+1}$, the element $\sigma$ also fixes $\mu_{p^\infty}$.
      \item[(iii)(b)] Let $G \subset \Gal_{E[1](\mu_{p^\infty})}$ be the subgroup such that $\det\rho(G)=1$.  Note that $\sigma\in G$. It then follows from a classification theorem of Katz \cite[Proposition 4]{Scholl} that the connected component of the Zariski closure of $\rho(G)$ is one of 
$\mathrm{Sym}^{d-1}\SL_2$, $\mathrm{SL}_d$, or $\mathrm{Sp}_d$, where $d=n(n+1)$ is even. Each of these groups contains a regular semisimple element with no eigenvalue equal to $1$
(for $\mathrm{Sym}^{d-1}\SL_2$ we are using that $d-1$ is odd). Hence so must $\rho(G)$. Such an element gives the desired $\gamma$.\qedhere
  \end{itemize}
\end{proof}

\begin{remark}
  If we assume further that for each $m\in\{n,n+1\}$, there are two places $v_m$ and $v_m'$ of distinct residue characteristics such that 
  $\Pi_m$ is a twist of a Steinberg representation at both primes, and $\Pi_n$ is unramified at $v_{n+1}, v_{n+1}'$, and $\Pi_{n+1}$ is unramified at $v_n, v_n'$, then by applying the corollary to the four possible pairs, we see that the implication \eqref{rk1BK} holds for all primes $p$.
\end{remark}

\bibliographystyle{alpha}
\bibliography{Ref}

\end{document}